\theoremstyle{plain}
\newtheorem{theorem}{Theorem}[section]
\newtheorem{lemma}[theorem]{Lemma}
\newtheorem{prop}[theorem]{Proposition}
\newtheorem{cor}[theorem]{Corollary}
\theoremstyle{definition}
\newtheorem{remark}[theorem]{Remark}
\numberwithin{equation}{section}
\title{A Minkowski-type inequality in the AdS-Melvin space}
\author[D. Xia]{Daniel Xia}
\address{Ridge High School, New Jersey, NJ 07920}
\email{danielxia03@gmail.com}
\author[P.-K. Hung]{Pei-Ken Hung}
\address{School of Mathematics, University of Minnesota, 
Minneapolis, MN 55455}
 \email{pkhung@umn.edu}
\date{November 2021}
\begin{document}

\begin{abstract}
The AdS-Melvin spacetime was introduced by Astorino and models the AdS soliton with electromagnetic charge. It is a static spacetime with a time-symmetric Cauchy hypersurface, which we refer to as the AdS-Melvin space. In this paper, we study a sharp Minkowski-type inequality for surfaces embedded in the AdS-Melvin space. We first prove the inequality for special cases in which the surface enjoys axisymmetry or is a small perturbation of a coordinate torus. We then use a weighted normal flow to show that the inequality holds for general surfaces. 
\end{abstract}

\maketitle

\section{Introduction}
The classical Minkowski inequality for a closed convex hypersurface $\Sigma$ in $\mathbb{R}^n$ reads
\begin{equation}\label{minko}
\int_{\Sigma}H\, dA\geq |\mathbb{S}^{n-1}|^{\frac{1}{n-1}}|\Sigma|^{\frac{n-2}{n-1}}.
\end{equation}Here $H$ stands for the mean curvature of $\Sigma$. Moreover, equality holds if and only if $\Sigma$ a round sphere. Applying the inverse mean curvature flow, Guan and Li  \cite{Guan} showed that the convexity condition for \eqref{minko} can be relaxed to mean convexity and star-shapedness. The star-shapedness can be further weakened to an outward minimizing condition using the weak inverse mean curvature flow of Huisken and Ilmanen \cite{HuiskenIlmanen}. It is still an open problem whether \eqref{minko} holds for general mean convex hypersurfaces.

In \cite{Penrose}, Penrose gave an interpretation of \eqref{minko} from the view of general relativity and cosmic censorship. The total mean curvature is viewed as the amount of mass needed to turn $\Sigma$ into a black hole and \eqref{minko} can be (heuristically) derived from the Penrose inequality. We refer readers to  \cite{Mars} for a detailed account of the Penrose ineqaulity and the connection to \eqref{minko}.\\ 

Based on the black hole collapse interpretation, \eqref{minko} has been generalized to other ambient spaces. In  \cite{BrendleHuanWang}, Brendle, Wang and the second author extended this inequality to convex, star-shaped hypersurfaces $\Sigma$ in the Anti-de-Sitter Schwarzschild space. Let $\Sigma_{H}$ be the horizon and $\phi$ be the static potential in the Anti-de-Sitter Schwarzschild space. The inequality reads
\begin{equation}
\int_{\Sigma}\phi H\, dA-n(n-1)\int_{\Omega}\phi\, dV\geq (n-1)|\mathbb{S}^{n-1}|^{\frac{1}{n-1}}\left(|\Sigma|^{\frac{n-2}{n-1}}-|\Sigma_H|^{\frac{n-2}{n-1}}\right),
\end{equation}where $\Omega$ is the bounded region with boundary $\partial\Omega=\Sigma\cup\Sigma_H$. Moreover, equality holds if and only if $\Sigma$ is a coordinate sphere. For a bounded region with outward minimizing boundary
$\Sigma$ in the Schwarzschild space with mass $m$, Wei  \cite{YWei} proved that
\begin{equation}\label{eq1.3}
\frac{1}{(n-1)|\mathbb{S}^{n-1}|}\int_{\Sigma}\phi H\, dA\geq \left(\frac{|\Sigma|}{|\mathbb{S}^{n-1}|}\right)^{\frac{n-2}{n-1}}-2m,
\end{equation}with rigidity for a coordinate sphere. Recently, McCormick \cite{McCormickM} showed that the inequality \eqref{eq1.3} holds for asymptotically flat static spacetimes of dimension $3\leq n\leq 7$. Note that all of these inequalities are for static manifolds with spherical infinity.\\

In 1999, Horowitz and Myers \cite{HM} discovered an exact solution to the Einstein equations with a negative cosmological constant, which they called an AdS soliton. It is static and asymptotically locally Anti-de Sitter. The static slice, referred to as \emph{the Horowitz-Myers geon} \cite{Woolgar}, is asymptotically locally hyperbolic with a toroidal conformal infinity.

Horowitz and Myers postulated that the Horowitz-Myers geon is the ground state for a conjectural non-supersymmetric AdS/CFT correspondence \cite{HM}. More precisely, let $(M,g)$ be an asymptotically hyperbolic manifold with toroidal conformal infinity and scalar curvature greater than or equal to $-n(n+1)$. Then the total mass is at least equal to the mass of the Horowitz-Myers geon. Moreover, the rigidity holds if and only if $(M,g)$ is isometric to the Horowitz-Myers geon. Progress on the Horowitz-Myers conjecture has been very limited. For small perturbations of Horowitz-Myers the conjecture was proven by Constable and Myers \cite{CostableMyers} and Woolgar proved the rigidity of this conjecture in 3 dimensions  \cite{Woolgar}. Recently, a Minkowski-type inequality was proved on the Horowitz-Myers geon by Alaee and the second author \cite{Alaee-Hung}. In this paper, we consider its generalization to the AdS soliton coupled with a Maxwell field. Such a spacetime is called called \emph{the AdS-Melvin spacetime} and we give a short description in the next paragraph.  \\

In \cite{Astorino}, a generalization of AdS soliton is introduced and is referred to as the AdS-Melvin spacetime. The AdS-Melvin spacetime is a static solution to the Einstein-Maxwell equations with a negative cosmological constant. The spacetime metric and the potential of the Maxwell field have the explicit form \cite{KT}
\begin{align*}
&^{(4)}{g}_{\mu\nu}dx^\mu dx^\nu=-r^2dt^2+r^{-2}F(r)^{-1}dr^2+r^2 dx^2+r^2F(r)dy^2,\\  &F(r)=1-r^{-3}-br^{-4},\ A_\mu dx^\mu=-\frac{b^{1/2}}{r}dy.
\end{align*}            
Here $b>0$ is a positive constant.  The ranges of the coordinates are \begin{align}
t\in \mathbb{R},\ r\in (r_{s},\infty),\ x\in [0,P_x],\ y\in [0,P_y].
\end{align}
Here $r_{s}$ is the largest positive root of $F(r)$. We identify $x=x+P_x$ and $y=y+P_y$. In particular, the $(x,y)$ coordinates parametrize a torus. The period $P_x$ is an arbitrary positive number and the period $P_y$ is chosen such that the metric can be extended smoothly to $r=r_{s}$. 
It is straightforward to check that $$P_y=\frac{4\pi}{r_{s}^2F'(r_{s})}.$$
When we send $b$ to zero, the AdS-Melvin spacetime reduces to the AdS soliton introduced by Horowitz and Myers.\\

In this paper, we mainly consider the time-symmetric Cauchy hypersurface $M=\{t=0\}$ in the AdS-Melvin spacetime and we refer to it as the \textit{AdS-Melvin space}. Topologically, $M$ is homeomorphic to a solid torus. The induced metric on $M$ is given by
\begin{align}\label{metric}
    \bar{g}_{ab}dx^adx^b=r^{-2}F(r)^{-1}dr^2+r^2 dx^2+r^2F(r)dy^2.
\end{align}
The metric in \eqref{metric} is complete and is asymptotically locally hyperbolic. Actually, if we replace $F(r)$ in \eqref{metric} by $1$, $g$ becomes the hyperbolic metric under the horosphere coordinates. 

Let $\Sigma$ be a surface embedded in $M$ which is a graph over the torus. In other words, there is a smooth function $s(x,y)$ such that $\Sigma=\{r=s(x,y)\}$. Let $g$ be the induced metric on $\Sigma$ and $H$ be the mean curvature with respect to the outward normal vector. Denote by $\Omega$ the region enclosed by $\Sigma$. Define the quantity $Q(\Sigma)$ by
\begin{equation}\label{def:Q}
Q(\Sigma):=\int_{\Sigma} H r\, dA-6\int_{\Omega} r\, dV.
\end{equation}  We are mainly concerned with the following Minkowski-type inequality:
\begin{align}\label{main} 
    Q(\Sigma) \geq P_xP_y(2r_s^3-2^{-1}).
\end{align} 
In this paper, we prove that \eqref{main} holds under various assumptions on $\Sigma$. Moreover, under those assumptions, we show that equality holds only if $\Sigma$ is a coordinate torus given by $\{r\equiv \textup{const}\}$.

To start, we consider a surface $\Sigma$ which is symmetric along the $x$ or $y$ direction. For these types of surfaces, \eqref{main} holds true.
\begin{theorem}\label{thm:symmetry}
Let $\Sigma$ be a surface embedded in $M$ which is a graph over the torus. Assume that $\Sigma$ is symmetric along the $x$ or $y$ direction. Then \eqref{main} holds. Moreover, equality is achieved if and only if $\Sigma$ is a coordinate torus.
\end{theorem}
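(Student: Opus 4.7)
The plan is to exploit the symmetry to reduce the inequality to a one-dimensional integral identity. First assume $\Sigma$ is $x$-symmetric, so $s = s(y)$. Since $\partial_x$ is a Killing field, the $2$-plane $\Pi_{x_0} := \{x = x_0\}$ is totally geodesic (being the fixed set of $x \mapsto 2x_0 - x$), and $\Sigma$ decomposes as a cylinder $\gamma \times \mathbb{S}^1_x$ over the closed curve $\gamma(y) = (s(y), y)$ in $\Pi_{x_0}$ encircling the axis $\{r = r_s\}$. The two principal directions of $\Sigma$ are $\partial_x$ and the unit tangent $\hat{T}$ to $\gamma$, so the mean curvature splits as $H = \kappa_x + \kappa_y$; a direct Christoffel computation gives $\kappa_x = \nu^r/r$, and since $\Pi_{x_0}$ is totally geodesic, $\kappa_y$ equals the (appropriately signed) geodesic curvature of $\gamma$ in $\Pi_{x_0}$.

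The next step is to compute $\kappa_y$ via the structure equations on $\Pi_{x_0}$. Introducing the orthonormal frame $e_1 = r\sqrt{F}\,\partial_r$, $e_2 = (r\sqrt{F})^{-1}\partial_y$, I would derive the connection $1$-form $\omega$ (characterized by $\nabla e_1 = \omega \otimes e_2$) and obtain $\omega = \tfrac{r(2F+rF')}{2}\,dy$. Writing $\hat{T} = \sin\theta\,e_1 + \cos\theta\,e_2$ for an angle $\theta(y)$, one gets $\kappa_y = \omega(\hat{T}) - \theta'(\sigma)$, with $\sigma$ arclength along $\gamma$.

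The heart of the proof is a purely algebraic cancellation: combining the formulas for $\kappa_x$, $\kappa_y$, the induced area form on $\Sigma$, and the volume form on $\Omega$, and using the identities $s^3 F(s) = s^3 - 1 - b/s$ and $\tfrac{s^3(2F(s)+sF'(s))}{2} = s^3 + \tfrac{1}{2} + b/s$ (which follow from $F(r) = 1 - r^{-3} - br^{-4}$), everything collapses to
\begin{equation*}
Q(\Sigma) - P_xP_y\bigl(2r_s^3 - \tfrac{1}{2}\bigr) = -P_x\int_0^{P_y} s^2\,\theta'(y)\,dy.
\end{equation*}
Verifying this identity is the main technical step and where I expect the bulk of the computational work; the fact that every $b$-dependent term and every $s^3$ term cancels is the ``miracle'' that reflects the correct choice of $Q$.

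The final step is an integration by parts. Since $\Sigma$ is a graph, $\theta$ stays in $(-\pi/2, \pi/2)$ and is single-valued and periodic in $y$, so periodicity gives $\int_0^{P_y} s^2\,\theta'(y)\,dy = -2\int_0^{P_y} s\,s'\,\theta\,dy$. From $\sin\theta = s'/(s\sqrt{F}\cdot L)$ (with $L$ the speed of $\gamma$ in $\Pi_{x_0}$), $\theta$ and $s'$ share the same sign, so $s s'\theta \geq 0$ pointwise. This yields $Q(\Sigma) \geq P_xP_y(2r_s^3 - \tfrac{1}{2})$, with equality iff $s' \equiv 0$, i.e., $\Sigma$ is a coordinate torus. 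The $y$-symmetric case $s = s(x)$ proceeds in parallel, working in the totally geodesic $2$-plane $\{y = y_0\}$ (whose metric lacks the factor $F$ in the $x$-direction); the analogous cancellation produces $Q(\Sigma) - P_xP_y(2r_s^3 - \tfrac{1}{2}) = -P_y\int_0^{P_x} s^2\sqrt{F(s)}\,\phi'(x)\,dx$, and positivity again follows from IBP together with the sign of $s'$ and the bound $4s^3 - 1 > 0$, which holds because $r_s > 1$ whenever $b > 0$.
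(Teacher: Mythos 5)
Your argument is correct and, after unwinding the geometry, is the paper's own argument. In the $x$-symmetric case your turning angle $\theta$ in the totally geodesic slice $\Pi_{x_0}=\{x=x_0\}$ satisfies $\tan\theta = s'/(s^2F)$, which is exactly $d\lambda/dy$ for the paper's substitution $d\lambda/dr = r^{-2}F^{-1}$ in Lemma~\ref{lemma:defs2}; similarly in the $y$-symmetric case your angle $\phi$ in $\{y=y_0\}$ has $\tan\phi = s'/(s^2\sqrt F) = d\lambda/dx$ for $d\lambda/dr = r^{-2}F^{-1/2}$ as in Lemma~\ref{lemma:defs}. Thus your claimed identities $Q(\Sigma) - P_xP_y(2r_s^3 - \tfrac12) = -P_x\int s^2\,\theta'\,dy$ and $= -P_y\int s^2\sqrt F\,\phi'\,dx$ are literally Lemmas~\ref{lemma:xsimp} and \ref{lemma:ysimp} rewritten with $\theta = \arctan(d\lambda/dy)$, $\phi = \arctan(d\lambda/dx)$, and the subsequent integration by parts and sign analysis match the paper's proofs of Propositions~\ref{prop:xsym} and \ref{prop:ysym}. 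What your framing adds is an explanation of \emph{why} the paper's ODE ansatz for $\lambda$ works: $\lambda$ is an isothermal coordinate on the relevant totally geodesic 2-plane (in $\Pi_{x_0}$ one has $\bar g|_\Pi = r^2F(d\lambda^2 + dy^2)$, and in $\{y=y_0\}$, $\bar g|_\Pi = r^2(d\lambda^2 + dx^2)$), so $\arctan(\lambda')$ is the actual turning angle of the profile curve and the whole reduction is the classical ``total curvature'' computation in a conformally flat surface. The paper instead introduces $\lambda$ by fiat and verifies the collapse by direct manipulation of the mean curvature formula of Lemma~\ref{lemma:H}.

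The only soft spot is that you flag the central reduction --- ``everything collapses to $-P_x\int s^2\theta'\,dy$'' --- as unverified. Since this is precisely where the inequality lives, a complete write-up would need that computation in full; but the two algebraic identities you isolate ($s^3F = s^3-1-b/s$ and $\tfrac12 s^3(2F+sF') = s^3 + \tfrac12 + b/s$) are exactly the ones that drive the cancellation, and the claimed output matches Lemmas~\ref{lemma:xsimp}/\ref{lemma:ysimp}, so this is a presentational gap rather than a mathematical one. Two small checks you were right to make: $\theta\in(-\pi/2,\pi/2)$ is guaranteed by the graph hypothesis so $\theta$ is single-valued and periodic, and $\tfrac{d}{ds}(s^2\sqrt F) = \tfrac{4s^3-1}{2s^2\sqrt F}>0$ since $r_s>1$ for $b>0$ (indeed $r_s^3 = 1 + b/r_s > 1$).
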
 
Our next result considers the validity of \eqref{main} when $\Sigma$ is a perturbation of a coordinate torus. 
\begin{theorem}
\label{thm:perturbation}
Fix $r_0>r_s$ and a smooth function $\phi(x,y)$ on the torus. For $\varepsilon$ small, consider \begin{equation}
\label{perturbationdef_intro}
\tilde{s}_\varepsilon(x,y) = r_0 + \varepsilon \phi(x,y) 
\end{equation} 
and the graph $\tilde{\Sigma}_\epsilon$ given by
\begin{align*}
\tilde{\Sigma}_\varepsilon=\{ (\tilde{s}_\varepsilon(x,y),x,y)\, |\, (x,y)\in T^2 \}.
\end{align*}
Then we have
\begin{equation}
\label{firstorder_intro}
\frac{d}{d\varepsilon}Q(\tilde{\Sigma}_{\varepsilon})\bigg|_{\varepsilon=0} =0, 
\end{equation}
and
\begin{equation}
\label{secondorder_intro}
\frac{d^2}{d\varepsilon^2}Q(\tilde{\Sigma}_{\varepsilon})\bigg|_{\varepsilon=0} \geq 0.
\end{equation}
Moreover, equality holds in $\eqref{secondorder_intro}$ if and only if $\phi$ is a constant function.
\end{theorem}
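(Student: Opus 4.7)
The plan is to express $Q(\tilde{\Sigma}_\varepsilon)$ as an integral over $T^2$ of a Lagrangian density in $\tilde{s}_\varepsilon$ and its derivatives, then Taylor expand to order $\varepsilon^2$ and analyze the resulting quadratic form. Writing $\mathcal{G} = \sqrt{s^4 F(s) + s_x^2 + F(s)^{-1} s_y^2}$ for the induced area element and using the variational identity $Hs = \mathcal{G}_s - \partial_x \mathcal{G}_{s_x} - \partial_y \mathcal{G}_{s_y}$ (obtained by matching the first variation of area against the outward normal speed), one can write
\[
\int_\Sigma H r \, dA = \int_{T^2}\!\left[\tfrac{1}{2}(4s^3 F + s^4 F') + \tfrac{1}{2}F^{-2}F'\, s_y^2 - s_{xx} - F^{-1} s_{yy} + \tfrac{1}{\mathcal{G}}(s_x \mathcal{G}_x + F^{-1} s_y \mathcal{G}_y)\right] dx\, dy,
\]
while $6\int_\Omega r\, dV = 2\int_{T^2} s^3\, dx\, dy - 2 r_s^3 P_x P_y$ has no derivative terms. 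The computation will hinge on two algebraic identities specific to $F(r)=1-r^{-3}-b r^{-4}$, namely $4F + rF' = 4 - r^{-3}$ and $6F + 4rF' + \tfrac{r^2}{2}F'' = 6$. The first shows that on a coordinate torus $s\equiv r_0$ the density $\tfrac12(4s^3F+s^4F')-2s^3$ reduces to $-\tfrac12$, so $Q|_{\{r=r_0\}} = (2r_s^3-\tfrac12)P_xP_y$ independently of $r_0$.

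Setting $\tilde{s}_\varepsilon = r_0 + \varepsilon \phi$, the $\varepsilon$-expansion splits naturally by parity: since $\bar{g}$ is invariant under $x\mapsto -x$ and $y\mapsto -y$, the integrand is even separately in $(s_x, s_{xy})$ and in $(s_y, s_{xy})$. Using the first identity, the $\varepsilon^1$ coefficient collapses to $-\phi_{xx} - F_0^{-1}\phi_{yy}$ (with $F_0 := F(r_0)$), which integrates to $0$ on $T^2$, giving \eqref{firstorder_intro}. For the second variation, the $\varepsilon^2$ coefficient decomposes into three contributions: (i) a pure $\phi^2$ term from Taylor-expanding $\tfrac12(4s^3F+s^4F')-2s^3$, which vanishes identically by the second identity; (ii) a $\phi\phi_{yy}$ term from expanding $F^{-1}$ in $-F^{-1}s_{yy}$, which after integration by parts combines with $\tfrac12 F^{-2}F'\phi_y^2$ to give a net $-\tfrac12 F_0^{-2}F_0'\phi_y^2$; and (iii) the leading term of $\mathcal{G}^{-1}(s_x\mathcal{G}_x + F^{-1}s_y\mathcal{G}_y)$, which evaluates to $\tfrac{4r_0^3-1}{2r_0^4 F_0}(\phi_x^2 + F_0^{-1}\phi_y^2)$.

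Assembling these and simplifying the coefficient of $\phi_y^2$ via the direct computation $\tfrac{4r_0^3-1}{r_0^4} - F_0' = \tfrac{4F_0}{r_0}$ should yield
\[
\frac{d^2 Q}{d\varepsilon^2}\bigg|_{\varepsilon=0} = \int_{T^2}\!\left[\frac{4r_0^3-1}{r_0^4 F_0}\,\phi_x^2 + \frac{4}{r_0 F_0}\,\phi_y^2\right] dx\, dy.
\]
Both coefficients are strictly positive for every admissible $r_0>r_s$: indeed $F(r_s)=0$ forces $r_s^4 = r_s + b > r_s$, hence $r_s>1$, so $4r_0^3 - 1 > 0$ and $F_0 > 0$. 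Nonnegativity in \eqref{secondorder_intro} is then immediate, and the rigidity statement follows because equality forces $\phi_x\equiv\phi_y\equiv 0$, whence $\phi$ is constant by connectedness of $T^2$. The main obstacle I anticipate is the bookkeeping in the Taylor expansion of $\mathcal{G}^{-1}(s_x\mathcal{G}_x + F^{-1}s_y\mathcal{G}_y)$ through the partial derivatives $\mathcal{G}_s$, $\mathcal{G}_{s_x}$, $\mathcal{G}_{s_y}$ and their total $x$- and $y$-derivatives; the two polynomial identities for $F$ are precisely what will drive the cancellations producing the closed-form second variation above.
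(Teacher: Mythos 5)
Your proposal is correct and follows essentially the same route as the paper: Taylor-expand $Q(\tilde{\Sigma}_\varepsilon)$ to order $\varepsilon^2$, show the first-order coefficient $-\phi_{xx}-F_0^{-1}\phi_{yy}$ integrates to zero on $T^2$, and after integrating the $\phi\,\phi_{yy}$ term by parts reduce the second-order coefficient to the same positive quadratic form $\int_{T^2}\left[\tfrac{4r_0^3-1}{r_0^4 F_0}\phi_x^2+\tfrac{4}{r_0F_0}\phi_y^2\right]dx\,dy$, with the identity $4F+rF'=4-r^{-3}$ driving the cancellations exactly as in the paper's Lemma~\ref{lemma:perturbationsimp}. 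The Lagrangian-density framing you add on top is cosmetic; the expansion and the rigidity conclusion coincide with the paper's proof of Proposition~\ref{prop:perturbation}.
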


After establishing \eqref{main} for the special cases in Theorem~\ref{thm:symmetry} and Theorem~\ref{thm:perturbation}, we use a normal flow to investigate more general surfaces. Concretely, given a surface $\Sigma$ in $M$, we consider the normal flow $\varphi:[0,T_0)\times T^2\to M$ with initial data $\Sigma$ that satisfies
\begin{align}\label{equ:trueflow_intro}
\frac{\partial  \varphi}{\partial t } = r^{-1}\nu.
\end{align}
The speed $r^{-1}$ is chosen such that the quantity $Q$ is monotonically nonincreasing. See Proposition~\ref{pro:monotonicity} for more details. We show that \eqref{main} holds provided we can extend $T_0$ to infinity.  
\begin{theorem}
\label{theorem:main}
Let $\Sigma$ be an embedded surface in $M$ which is a graph over the torus. Assume that the flow \eqref{equ:trueflow_intro} exists for all time. Then \eqref{main} holds. Moreover, equality is achieved if and only if $\Sigma$ is a coordinate torus.
\end{theorem}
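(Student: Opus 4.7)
The plan is to apply the weighted normal flow \eqref{equ:trueflow_intro}, which by hypothesis exists for all time, and to compare $Q(\Sigma)$ to its asymptotic value along the flow via the monotonicity of $Q$. The starting observation is that on any coordinate torus $\Sigma_{r_0}=\{r=r_0\}$ with $r_0>r_s$, a direct computation from \eqref{metric} yields
\begin{equation*}
Q(\Sigma_{r_0}) = P_x P_y\bigl(2 r_s^3 - \tfrac{1}{2}\bigr),
\end{equation*}
independently of $r_0$ (the cancellation occurs because $2 r^3 F(r)+\tfrac{1}{2} r^4 F'(r)=2r^3-\tfrac{1}{2}$ for the explicit $F$, while $6\int_\Omega r\,dV=2(r_0^3-r_s^3)P_xP_y$). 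So the bound in \eqref{main} is exactly the coordinate-torus value of $Q$, which is the natural target for the asymptotic evaluation of $Q(\Sigma_t)$.

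I would proceed as follows. First, verify that \eqref{equ:trueflow_intro} preserves graphicality: the outward unit normal has strictly positive $\partial_r$-component on any graph over $T^2$, so the graph function $s_t$ is strictly increasing in $t$, and smoothness is furnished by the long-time existence hypothesis. Second, invoke Proposition~\ref{pro:monotonicity} to conclude $\tfrac{d}{dt}Q(\Sigma_t)\leq 0$. Third, use coordinate tori as barriers: the pure torus $\{r=r_0\}$ evolves via $\dot r_0=F(r_0)^{1/2}$, so $r_0(t)\to\infty$ linearly, and the difference between two such solutions stays bounded because $F\to 1$ at infinity. Comparing $s_t$ to these barriers via the maximum principle gives $\min_{T^2} s_t,\ \max_{T^2} s_t\to\infty$ with bounded difference, so the relative oscillation $\max s_t/\min s_t$ tends to $1$.

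The crux is the asymptotic evaluation $\lim_{t\to\infty}Q(\Sigma_t)=P_xP_y(2r_s^3-\tfrac{1}{2})$: expand $Hr$ and $r$ on $\Sigma_t$ in terms of $s_t$ and its derivatives using \eqref{metric}. The leading $\bar s_t^{\,3}$ contributions to $\int_{\Sigma_t}Hr\,dA$ and to $6\int_{\Omega_t}r\,dV$ cancel against each other, leaving precisely the coordinate-torus value, and the remaining error terms vanish as $t\to\infty$ provided one controls the derivatives of $s_t$. Combining monotonicity with the limit yields $Q(\Sigma)\geq P_xP_y(2r_s^3-\tfrac{1}{2})$. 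For the rigidity statement, equality forces $\tfrac{d}{dt}Q\equiv 0$ for all $t\geq 0$, and the equality case of Proposition~\ref{pro:monotonicity} then identifies each $\Sigma_t$, and in particular $\Sigma$ itself, as a coordinate torus.

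The main obstacle is the asymptotic step. While zeroth-order control of $s_t$ follows directly from barrier arguments, the convergence $Q(\Sigma_t)\to P_xP_y(2r_s^3-\tfrac{1}{2})$ requires higher-order a priori estimates, since the mean curvature term $\int Hr\,dA$ picks up contributions from tangential derivatives of $s_t$ that must be shown to be negligible compared to the $\bar s_t^{\,3}$ leading order. A natural route is to derive decay of the rescaled gradient $|\nabla_{T^2} s_t|/s_t$ (and possibly of higher derivatives), either by a maximum-principle argument applied to a suitable gradient quantity along the flow, or by bootstrapping from the strict monotonicity of $Q$ away from coordinate tori together with the decay $F(r)-1\to 0$ to force the flow to round out asymptotically.
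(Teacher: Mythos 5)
Your overall outline --- run the flow \eqref{equ:trueflow_intro}, invoke the monotonicity $\frac{d}{dt}Q(\Sigma_t)\le 0$ from Proposition~\ref{pro:monotonicity}, compute $\lim_{t\to\infty}Q(\Sigma_t)$, and extract the rigidity from the equality case of the monotonicity --- coincides exactly with the paper's proof of Theorem~\ref{theorem:main}. Your barrier comparison for $C^0$ control of the height function is essentially Lemma~\ref{lem:C0}, and your computation that every coordinate torus saturates the bound is correct.

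The gap is exactly where you flag it, and it is not a small one. Establishing
\begin{equation*}
\lim_{t\to\infty}Q(\Sigma_t) = P_xP_y\bigl(2r_s^3 - 2^{-1}\bigr)
\end{equation*}
is the entire content of Section~5 (Proposition~\ref{theorem:theorem5.1}), and your proposal leaves it as a plan rather than a proof. The paper needs \emph{quantitative} decay with explicit rates: Lemma~\ref{lem:C1} derives an evolution equation for the gradient quantity $z^2$ and, via a maximum-principle ODE argument using $F'\ge 0$, proves $z^2-1 = O\bigl((1+t)^{-4}\bigr)$; Lemma~\ref{lem:C2} derives a \emph{separate} evolution equation for $T=H-2$ and obtains the one-sided bound $H-2\le C(1+t)^{-3}\bigl(\log(1+t)+1\bigr)$. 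These rates are then threaded through the integral limits: the identity expressing $r(H-2)$ in terms of $F^{-1/2}z\,\Delta r$ and lower-order terms is combined with $\int_{\Sigma_t}\Delta r\,dA_t = 0$ (divergence theorem on a closed surface) and the one-sided $C^2$ bound to control $\int_{\Sigma_t}|\Delta r|\,dA_t$. Your suggestion of a maximum principle for a rescaled gradient is the right idea for the $C^1$ piece (the paper's $z^2$ is essentially $1 + |\nabla_{T^2}s|^2/s^4$, not $1+|\nabla_{T^2}s|^2/s^2$), but the $C^2$ bound on $H$ and the divergence-theorem device are equally essential and do not appear in your proposal, nor is it obvious in advance that the available bound on $H$ is only one-sided. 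So while your strategy is the same as the paper's, the proposal stops short of a proof of the key limit.
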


\subsection*{Notation}
  We explain here the notation we use in this paper. We use $x^a,x^b$,  $a,b=1,2,3$ to denote a coordinate of the AdS-Melvin space $M$. We write $\bar{g}_{ab}$ for the AdS-Melvin metric on $M$ and write $\bar{\nabla}$ for the corresponding Levi-Civita connection. The volume form is denoted as $dV$. We write $F(r)$ for the function
  \begin{equation}\label{def:F}
  F(r)=1-r^{-3}-br^{-4}.
  \end{equation}
  We often omit the arguments in $F(r)$ and $\frac{d F}{dr}(r)$ and denote them by $F$ and $F'$  respectively. For any $\lambda \in \mathbb{R}$, we write $F^a$ for $(F(r))^\lambda$.
  
  For an embedded suface $\Sigma$ in $M$, we use $x^i,x^j$, $i,j=1,2$ to denote a coordinate on $\Sigma.$ The induced metric of $\Sigma$ is denoted by $g_{ij}$ and we write $\nabla$ for the Levi-Civita connection. The unit outward normal vector of $\Sigma$ is denoted by $\nu$ and $H$ is the mean curvature with respect to $\nu$. The area form is denoted by $dA$. In the case that $\Sigma$ is a graph over the torus as $\Sigma=\{r=s(x,y)\}$, we write $F$, $F^\lambda$ and $F'$ as abbreviations of $F(s(x,y))$, $(F(s(x,y)))^\lambda$ and $\frac{dF}{dr}(s(x,y))$ respectively.  \\   

This paper is organized as follows. In Section 2, we compute the geometric quantities appearing in \eqref{main} in terms of the height function $s(x,y)$. In Section 3, we prove Theorem~\ref{thm:symmetry} and Theorem~\ref{thm:perturbation} using formulas derived in Section 2. In Section 4, we establish the monotonicity of $Q(\Sigma_t)$ along the flow \eqref{equ:trueflow_intro}. The limit of $Q$ is computed in Section 5, leading to the proof of Theorem~\ref{theorem:main}.

\section{Geometry of graphs}
In this section we record basic geometric quantities of an embedded surface $\Sigma\subset M$ that is a graph over the torus. We start with the Christoffel symbols of $(M,\bar{g})$. Recall that $M$ is covered by a coordinate system $(r,x,y)\in (r_{s},\infty)\times T^2$ and the metric $\bar{g}$ is given by \eqref{metric}.
\begin{lemma}\label{lem:christoffel}
The nonzero Christoffel symbols are as follows:
\begin{equation}\label{equ:Gamma}
\begin{split}
   & \bar{\Gamma}_{rr}^r = \frac 12 r^2F   \big(r^{-2}F^{-1} \big)', \ \bar{\Gamma}_{xx}^r = -r^3F , \\
    &\bar{\Gamma}_{yy}^r = -\frac 12 r^2F   \big(r^2F \big)',\     \bar{\Gamma}_{rx}^x  = \bar{\Gamma}_{xr}^x = r^{-1}, \\
    &\bar{\Gamma}_{ry}^y = \bar{\Gamma}_{yr}^y = \frac{1}{2}r^{-2}F^{-1}\big(r^2F\big)'. 
\end{split}
\end{equation}
\end{lemma}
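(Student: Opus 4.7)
The plan is a direct application of the Koszul formula
\[
\bar\Gamma^k_{ij} = \tfrac12 \bar g^{kl}\bigl(\partial_i \bar g_{jl} + \partial_j \bar g_{il} - \partial_l \bar g_{ij}\bigr)
\]
to the metric \eqref{metric}. First I would record the inverse metric: since $\bar g$ is diagonal with entries $\bar g_{rr}=r^{-2}F^{-1}$, $\bar g_{xx}=r^2$, $\bar g_{yy}=r^2F$, the inverse is also diagonal, with $\bar g^{rr}=r^2F$, $\bar g^{xx}=r^{-2}$, $\bar g^{yy}=r^{-2}F^{-1}$. In particular the sum over $l$ in the Koszul formula collapses to the single term $l=k$.

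Next, because every component of $\bar g$ depends only on $r$, the only surviving derivatives are $\partial_r$ acting on a diagonal entry. A short bookkeeping check then rules out all but five candidate nonzero symbols: $\bar\Gamma^r_{rr}$, $\bar\Gamma^r_{xx}$, $\bar\Gamma^r_{yy}$, together with the symmetric pairs $\bar\Gamma^x_{rx}=\bar\Gamma^x_{xr}$ and $\bar\Gamma^y_{ry}=\bar\Gamma^y_{yr}$. For the ``diagonal'' type $\bar\Gamma^k_{rk}$ with $k\in\{x,y\}$, two of the three terms in the Koszul formula contribute equally and the third vanishes, giving $\tfrac12\bar g^{kk}\partial_r\bar g_{kk}$. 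For the ``mixed'' type $\bar\Gamma^r_{kk}$, only the term $-\partial_l \bar g_{ij}$ survives, producing $-\tfrac12\bar g^{rr}\partial_r\bar g_{kk}$. The symbol $\bar\Gamma^r_{rr}$ is a straightforward evaluation of $\tfrac12 \bar g^{rr}\partial_r\bar g_{rr}$.

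Substituting the explicit values of $\bar g$ and $\bar g^{-1}$ into these expressions yields the five identities in \eqref{equ:Gamma}; for instance $\bar\Gamma^r_{xx}=-\tfrac12(r^2F)(2r)=-r^3F$ and $\bar\Gamma^x_{rx}=\tfrac12 r^{-2}(2r)=r^{-1}$, while the $\bar\Gamma^r_{rr}$ and $\bar\Gamma^r_{yy}$ entries are recorded in the compact form involving $(r^{-2}F^{-1})'$ and $(r^2F)'$ that appears in the statement rather than being expanded further (the expansion would just distribute the $r$-derivative over the product). There is no genuine mathematical obstacle here; the only thing to watch is the sign bookkeeping in the Koszul formula, i.e.\ correctly identifying which of the three terms drop out in each index pattern.
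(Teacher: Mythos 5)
Your proposal is correct and is exactly the standard computation (Koszul formula applied to a diagonal metric depending only on $r$); the paper records Lemma~\ref{lem:christoffel} without proof because this is the same routine derivation. The inverse metric, the identification of which index patterns can be nonzero, and the sample evaluations you give all check out against \eqref{equ:Gamma}.
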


Next, we consider a surface which is a graph over the torus. Let $s(x,y)$ be a smooth function defined on $T^2$. Let 
\begin{align}\label{equ:Sigma}
\Sigma=\{(s(x,y),x,y )\,|\, (x,y)\in T^2 \} 
\end{align}  
with the parameterization $\varphi(x,y) = (s(x,y),x,y).$ We compute below the induced metric $g$ on $\Sigma$. From
\begin{equation}\label{equ:tangent}
\frac{\partial\varphi}{\partial x}  = \frac{\partial s}{\partial x}\frac{\partial}{\partial r} + \frac{\partial}{\partial x}, \ \frac{\partial\varphi}{\partial y}  = \frac{\partial s}{\partial y}\frac{\partial}{\partial r} + \frac{\partial}{\partial y},
\end{equation}
we have
\begin{equation}\label{equ:inducedmetric}
\begin{split}
g_{xx} &= r^2+r^{-2}F^{-1}\left(\frac{\partial s}{\partial x}\right)^2, \ g_{xy}= r^{-2}F^{-1} \frac{\partial s}{\partial x}\frac{\partial s}{\partial y},\\
g_{yy} &= r^2F+r^{-2}F^{-1}\left(\frac{\partial s}{\partial y}\right)^2.
\end{split}
\end{equation} 
Under these coordinates, we have
\begin{equation}\label{equ:detg}
 \det g =r^6F^2N^2,
\end{equation}
where $N$ is defined by
\begin{equation}
\label{equ:defN}
 N^2 = r^{-2}F^{-1}\left(1+r^{-4}F^{-1}\left(\frac{\partial s}{\partial x}\right)^2 +r^{-4}F^{-2}\left(\frac{\partial s}{\partial y}\right)^2 \right). 
\end{equation}
The inverse metric is then given by
\begin{equation}\label{equ:inversemetric}
\begin{split}
r^6F^2N^2 g^{xx}&=r^2F+r^{-2}F^{-1}\left(\frac{\partial s}{\partial y}\right)^2, \ r^6F^2N^2 g^{xy}= -r^{-2}F^{-1} \frac{\partial s}{\partial x}\frac{\partial s}{\partial y},\\
r^6F^2N^2 g^{yy}&= r^2+r^{-2}F^{-1}\left(\frac{\partial s}{\partial x}\right)^2.
\end{split}
\end{equation}
The area form of $\Sigma$ and volume form of $M$ are computed to be 
\begin{equation}
\label{volforms}
dA  = r^3FN\,   dx  dy, \  {dV} = r\,   dr  dx  dy.
\end{equation}
We denote by $\nu$ the unit outward normal vector of $\Sigma$. From \eqref{equ:tangent} we have
\begin{equation}\label{equ:normal}
\nu=N^{-1}\left( \frac{\partial}{\partial r}-r^{-4}F^{-1}\frac{\partial s}{\partial x} \frac{\partial}{\partial x}-r^{-4}F^{-2}\frac{\partial s}{\partial y} \frac{\partial}{\partial y} \right).
\end{equation}

Consider the function $z (x,y)$ defined through 
\begin{align}\label{def:z}
z^2 = 1+r^{-4}F^{-1}\left( \frac{\partial s }{\partial x} \right)^2+r^{-4}F^{-2}\left(\frac{\partial s }{\partial y} \right)^2. 
\end{align}
In view of \eqref{equ:defN}, the relation between $z$ and $N$ is given by
\begin{align}
N^2=r^{-2}F^{-1}z^2.
\end{align}
Recall that $\nabla$ is the Levi-Civita connection on $\Sigma$. In the lemma below, we compute $|\nabla r|^2$ and $|\nabla y|^2$. These formulas will be used in expressing $\Delta r$ in terms of the mean curvature $H$.
\begin{lemma}
We have
\begin{equation}\label{equ:gradr}
|\nabla r|^2=r^2 F(1-z^{-2}),
\end{equation}
and
\begin{equation}\label{equ:grady}
|\nabla y|^2=r^{-2}F^{-1}-r^{-6}F^{-3}z^{-2}\left(\frac{\partial s}{\partial y} \right)^2.
\end{equation}
\end{lemma}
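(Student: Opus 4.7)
The plan is a direct computation starting from the inverse-metric formulas \eqref{equ:inversemetric}, using that on the graph $\Sigma$ parameterized by $(x,y)$, the functions $r$ and $y$ restrict to $s(x,y)$ and $y$, respectively.

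First I would treat $|\nabla y|^2$, which is the easier of the two. Since $y\big|_\Sigma$ is simply the parameter $y$, we have $\nabla_i y = \delta_i^{\,y}$, so $|\nabla y|^2 = g^{yy}$. Substituting the expression for $g^{yy}$ from \eqref{equ:inversemetric} together with $r^6F^2N^2 = r^4F z^2$ (which follows from $N^2 = r^{-2}F^{-1}z^2$), I would write
\begin{equation*}
|\nabla y|^2 = \frac{r^2 + r^{-2}F^{-1}(\partial_x s)^2}{r^4F z^2}.
\end{equation*}
A short manipulation, replacing $1 + r^{-4}F^{-1}(\partial_x s)^2$ by $z^2 - r^{-4}F^{-2}(\partial_y s)^2$ via \eqref{def:z}, converts this immediately to \eqref{equ:grady}.

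For $|\nabla r|^2$, I would use $r\big|_\Sigma = s(x,y)$, so $|\nabla r|^2 = g^{xx}(\partial_x s)^2 + 2g^{xy}\partial_x s\,\partial_y s + g^{yy}(\partial_y s)^2$. Plugging in \eqref{equ:inversemetric}, the cross term contributes $-2r^{-2}F^{-1}(\partial_x s)^2(\partial_y s)^2$, which cancels exactly one of the two mixed $r^{-2}F^{-1}(\partial_x s)^2(\partial_y s)^2$ pieces coming from $g^{xx}(\partial_x s)^2$ and $g^{yy}(\partial_y s)^2$. After the cancellation the numerator collapses to $r^2F(\partial_x s)^2 + r^2(\partial_y s)^2$, leaving
\begin{equation*}
|\nabla r|^2 = \frac{(\partial_x s)^2}{r^2 z^2} + \frac{(\partial_y s)^2}{r^2F z^2}.
\end{equation*}
Recognizing the right-hand side as $r^2F\cdot z^{-2}\bigl(r^{-4}F^{-1}(\partial_x s)^2 + r^{-4}F^{-2}(\partial_y s)^2\bigr) = r^2F\cdot z^{-2}(z^2-1)$ by \eqref{def:z} yields \eqref{equ:gradr}.

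There is essentially no obstacle here beyond bookkeeping; the only point requiring attention is the cancellation of the mixed $r^{-2}F^{-1}(\partial_x s)^2(\partial_y s)^2$ terms in the computation of $|\nabla r|^2$, and the identification of the factor $z^2-1$ with the gradient-squared combination in \eqref{def:z}.
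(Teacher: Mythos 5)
Your proposal is correct and follows essentially the same computational route as the paper: both use the inverse-metric components (your step via \eqref{equ:inversemetric} together with $r^6F^2N^2=r^4Fz^2$ is exactly the paper's rewritten form \eqref{equ:inversemetric_z}), observe the cancellation of mixed terms in $|\nabla r|^2$, and finish by identifying the remainder with $z^2-1$ via \eqref{def:z}.
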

\begin{proof}
In terms of $z$, the inverse metric can be expressed as
\begin{equation}\label{equ:inversemetric_z}
\begin{split}
r^4Fz^2 g^{xx}&=r^2F+r^{-2}F^{-1}\left(\frac{\partial s}{\partial y}\right)^2, \ r^4Fz^2 g^{xy}= -r^{-2}F^{-1} \frac{\partial s}{\partial x}\frac{\partial s}{\partial y},\\
r^4Fz^2 g^{yy}&= r^2+r^{-2}F^{-1}\left(\frac{\partial s}{\partial x}\right)^2.
\end{split}
\end{equation}
Through a direct computation,
\begin{align*}
r^4F z^2 |\nabla r|^2&=\left( r^2 F+r^{-2}F^{-1}\left( \frac{\partial s}{\partial y} \right)^2 \right)\left( \frac{\partial s}{\partial x} \right)^2 +\left( r^2+r^{-2}F^{-1}\left( \frac{\partial s}{\partial x} \right)^2 \right)\left( \frac{\partial s}{\partial y} \right)^2\\
&+2\left(-r^{-2}F^{-1}\frac{\partial s}{\partial x}\frac{\partial s}{\partial y}\right)\frac{\partial s}{\partial x}\frac{\partial s}{\partial y}\\
&=r^2 F\left(\frac{\partial s}{\partial x} \right)^2+r^2  \left(\frac{\partial s}{\partial y} \right)^2=r^6 F^{2}(z^2-1).
\end{align*}
Here we use \eqref{def:z} for the last equality. Then \eqref{equ:gradr} follows by rearranging terms. \\

We turn to \eqref{equ:grady}. Through a direct computation,
\begin{align*}
\vert\nabla y\vert^2=g^{yy} &= r^{-4}F^{-1}z^{-2}\left( r^2+r^{-2}F^{-1}\left( \frac{\partial s}{\partial x} \right)^2 \right)\\
&= r^{-4}F^{-1}z^{-2}  \cdot r^2 \left(z^2-r^{-4}F^{-2}\left(\frac{\partial s}{\partial y} \right)^2  \right) \\
&= r^{-2}F^{-1}-r^{-6}F^{-3}z^{-2}\left(\frac{\partial s}{\partial y} \right)^2 .
\end{align*}
\end{proof}

Next, we calculate the second fundamental form of $\Sigma$.
\begin{lemma}
\label{lemma:secondform}
The second fundamental form of $\Sigma$ is given by
\begin{equation}\label{equ:h}
\begin{split}
   N\cdot h_{xx} &=   -r^{-2}F^{-1}\frac{\partial^2 s}{\partial x^2} +(3r^{-3}F^{-1}+2^{-1}r^{-2}F^{-2}F') \left(\frac{\partial s}{\partial x}\right)^2 +r , \\
  N\cdot  h_{yy} &=    - r^{-2}F^{-1}\frac{\partial^2s}{\partial y^2} + \frac 32 r^{-4}F^{-2}(r^2F)' \left(\frac{\partial s}{\partial y}\right)^2  + \frac 12 (r^2F)' , \\
   N\cdot h_{xy} &=  - r^{-2}F^{-1}\frac{\partial^2s}{\partial x\partial y} +(r^{-3}F^{-1}+r^{-4}F^{-2}(r^2F)')\frac{\partial s}{\partial x}\frac{\partial s}{\partial y}.
\end{split}
\end{equation}
\end{lemma}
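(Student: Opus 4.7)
The plan is to compute each entry of $h$ directly from the definition
\[
h_{ij}=-\bar{g}(\nu,\bar{\nabla}_{e_i}e_j),
\]
with $e_x=s_x\partial_r+\partial_x$ and $e_y=s_y\partial_r+\partial_y$ from \eqref{equ:tangent} and $\nu$ from \eqref{equ:normal}. Since $\bar{g}$ is diagonal, the three inner products
\[
\bar{g}(\nu,\partial_r)=N^{-1}r^{-2}F^{-1},\quad \bar{g}(\nu,\partial_x)=-N^{-1}r^{-2}F^{-1}s_x,\quad \bar{g}(\nu,\partial_y)=-N^{-1}r^{-2}F^{-1}s_y
\]
all carry the common factor $N^{-1}r^{-2}F^{-1}$. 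Multiplying the final answer by $N$ then produces the $r^{-2}F^{-1}$ that sits in front of every second-derivative term in \eqref{equ:h}, which is why the lemma is stated for $N\cdot h_{ij}$ rather than $h_{ij}$.

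Next I would expand
\[
\bar{\nabla}_{e_i}e_j=(e_i e_j^a)\partial_a+\bar{\Gamma}^a_{bc}\,e_i^b\,e_j^c\,\partial_a.
\]
The first term contributes exactly $s_{xx}$, $s_{xy}$ or $s_{yy}$ along $\partial_r$ (since the $e_i^a$ depend only on $x,y$). For the second term, Lemma~\ref{lem:christoffel} shows that only $\bar{\Gamma}^r_{rr}$, $\bar{\Gamma}^r_{xx}$, $\bar{\Gamma}^r_{yy}$, $\bar{\Gamma}^x_{rx}$ and $\bar{\Gamma}^y_{ry}$ are nonzero, so each case collapses to a short sum. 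For $(i,j)=(x,x)$ one obtains
\[
\bar{\nabla}_{e_x}e_x=\bigl[s_{xx}+s_x^2\bar{\Gamma}^r_{rr}-r^3F\bigr]\partial_r+2r^{-1}s_x\,\partial_x,
\]
and pairing with $\nu$ gives $Nh_{xx}$. Using $\bar{\Gamma}^r_{rr}=\tfrac12 r^2F(r^{-2}F^{-1})'=-r^{-1}-\tfrac12 F^{-1}F'$, the $\bar{\Gamma}^r_{xx}=-r^3F$ piece produces the leading $+r$, while combining the $\bar{\Gamma}^r_{rr}$ contribution with the tangential $\bar{\Gamma}^x_{rx}$ contribution produces the stated coefficient $3r^{-3}F^{-1}+\tfrac12 r^{-2}F^{-2}F'$ of $s_x^2$. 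The $(y,y)$ and $(x,y)$ expansions go through in exactly the same way, with $\bar{\Gamma}^r_{yy}=-\tfrac12 r^2F(r^2F)'$ producing the $+\tfrac12(r^2F)'$ in $Nh_{yy}$ and with $\bar{\Gamma}^y_{ry}=\tfrac12 r^{-2}F^{-1}(r^2F)'$ accounting for the remaining $(r^2F)'$-factors in the quadratic coefficients.

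The main obstacle is purely algebraic bookkeeping: each quadratic coefficient bundles three or four contributions with different signs involving $F$, $F'$ and $(r^2F)'$, and one must take care not to conflate $(r^{-2}F^{-1})'$ with $(r^2F)'$. A reassuring sanity check is the coordinate-torus case $s\equiv r_0$: the $s$-derivatives vanish and the formulas collapse to $Nh_{xx}=r$, $Nh_{yy}=\tfrac12(r^2F)'$, $Nh_{xy}=0$; combined with $N=r^{-1}F^{-1/2}$ these recover the principal curvatures $F^{1/2}$ and $\tfrac12 r^{-2}F^{-1/2}(r^2F)'$ of $\{r=r_0\}$, which is the expected umbilic-like structure of the coordinate torus.
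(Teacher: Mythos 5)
Your approach is identical to the paper's: expand $\bar{\nabla}_{e_i}e_j$ via the Christoffel symbols of Lemma~\ref{lem:christoffel}, then pair with $-\nu$, and your intermediate expansion of $\bar{\nabla}_{e_x}e_x$ and the bookkeeping of the $\bar{\Gamma}^r_{rr}$, $\bar{\Gamma}^r_{xx}$, $\bar{\Gamma}^x_{rx}$ contributions match the paper term for term. One small slip in your final sanity check: on the coordinate torus, $h_{yy}=N^{-1}\cdot\tfrac12(r^2F)'=\tfrac12 rF^{1/2}(r^2F)'$ and $g_{yy}=r^2F$, so the second principal curvature is $\tfrac12 r^{-1}F^{-1/2}(r^2F)'$, not $\tfrac12 r^{-2}F^{-1/2}(r^2F)'$, and since it differs from $F^{1/2}$ the coordinate torus is not umbilic; neither point affects your derivation of \eqref{equ:h}.
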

\begin{proof}
Recall that $\bar{\nabla}$ is the Levi-Civita connection of the Ads-Melvin metric $\bar{g}$. From \eqref{equ:tangent}, we compute
\begin{align*}
    \bar{\nabla}_{\frac{\partial\varphi}{\partial x} }\frac{\partial\varphi}{\partial x} =&\bar{\nabla}_{\frac{\partial\varphi}{\partial x} }\left( \frac{\partial s}{\partial x}\frac{\partial}{\partial r}+\frac{\partial}{\partial x} \right)\\
    =&\frac{\partial^2 s}{\partial x^2}\frac{\partial}{\partial r}+\frac{\partial s}{\partial x}    \bar{\nabla}_{\frac{\partial\varphi}{\partial x} }\frac{\partial}{\partial r}+    \bar{\nabla}_{\frac{\partial\varphi}{\partial x} }\frac{\partial}{\partial x}\\
    =&\frac{\partial^2 s}{\partial x^2}\frac{\partial}{\partial r}+\left(\frac{\partial s}{\partial x} \right)^2   \bar{\nabla}_{\frac{\partial}{\partial r}  }\frac{\partial}{\partial r}+ \frac{\partial s}{\partial x}   \bar{\nabla}_{\frac{\partial}{\partial x}  }\frac{\partial}{\partial r}+    \frac{\partial s}{\partial x}   \bar{\nabla}_{\frac{\partial}{\partial r}  }\frac{\partial}{\partial x}+\bar{\nabla}_{\frac{\partial}{\partial x}  }\frac{\partial}{\partial x}
\end{align*}    
From \eqref{equ:Gamma}, we have
\begin{align*}
\bar{\nabla}_{\frac{\partial\varphi}{\partial x} }\frac{\partial\varphi}{\partial x}=\left(\frac{\partial^2 s}{\partial x^2}+2^{-1}r^2 F (r^{-2}F^{-1})'\left(\frac{\partial s}{\partial x}\right)^2    - r^3F \right)\frac{\partial}{\partial r} + 2r^{-1}\frac{\partial s}{\partial x}  \frac{\partial}{\partial x}.
\end{align*}
Similar calculation yields
\begin{align*}
    \bar{\nabla}_{\frac{\partial\varphi}{\partial y}}\frac{\partial\varphi}{\partial y} = \left(\frac{\partial^2s}{\partial y^2}+2^{-1}r^2F(r^{-2}F^{-1})' \left(\frac{\partial s}{\partial y}\right)^2 - 2^{-1} r^2F(r^2F)' \right)\frac{\partial}{\partial r}  
     + r^{-2}F^{-1}(r^2F)'\frac{\partial s}{\partial
    y} \frac{\partial}{\partial y}
\end{align*}
and
\begin{align*}
\bar{\nabla}_{\frac{\partial\varphi}{\partial x} }\frac{\partial\varphi}{\partial y} =  \left(\frac{\partial^2 s}{\partial x\partial y}+ 2^{-1}r^2F(r^{-2}F^{-1})' \frac{\partial s}{\partial x}\frac{\partial s}{\partial y}\right) \frac{\partial}{\partial r} + r^{-1}\frac{\partial s}{\partial y} \frac{\partial}{\partial x} 
     +2^{-1}r^{-2}F^{-1}(r^2F)' \frac{\partial s}{\partial
    x} \frac{\partial}{\partial y}.
\end{align*}
The second fundamental form can now be computed through taking the inner product with the unit normal vector $-\nu$ given in \eqref{equ:normal}. 
\end{proof}
\begin{lemma}
\label{lemma:H}
The mean curvature $H$ of $\Sigma$ is given by
\begin{equation}\label{equ:H}
\begin{split}
        {r^6F^2N^3}H=&\bigg(- 1- r^{-4}F^{-2}\left(\frac{\partial s}{\partial y}\right)^2 \bigg)\frac{\partial^2 s}{\partial x^2}+\bigg(-F^{-1}
     -r^{-4}F^{-2}\left(\frac{\partial s}{\partial x}\right)^2 \bigg)\frac{\partial^2 s}{\partial y^2}\\
     &+2r^{-4}F^{-2} \frac{\partial s}{\partial x}\frac{\partial s}{\partial y} \frac{\partial^2 s}{\partial x\partial y}+ (4r^{-1}+F^{-1}F') \left(\frac{\partial s}{\partial x}\right)^2\\
        & +
    \left(4r^{-1}F^{-1}+\frac{3}{2}F^{-2}F'\right)\left(\frac{\partial s}{\partial y}\right)^2+2r^3F +\frac 12 r^4F'.
\end{split}
\end{equation}

\end{lemma}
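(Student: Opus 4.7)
The plan is to compute $H$ directly from the formula $H = g^{ij}h_{ij}$, using the inverse metric \eqref{equ:inversemetric} together with the expressions for the second fundamental form components in Lemma~\ref{lemma:secondform}. The factor $r^6F^2N^3$ on the left-hand side of \eqref{equ:H} is chosen precisely so that multiplying each summand $g^{ij}h_{ij}$ by it cancels the $r^6F^2N^2$ denominator in \eqref{equ:inversemetric} and the $N^{-1}$ in \eqref{equ:h}, leaving a polynomial expression in $s$ and its derivatives. Concretely, I would write
\[
r^6F^2N^3H \;=\; \bigl(r^6F^2N^2 g^{xx}\bigr)\bigl(N h_{xx}\bigr) + 2\bigl(r^6F^2N^2 g^{xy}\bigr)\bigl(N h_{xy}\bigr) + \bigl(r^6F^2N^2 g^{yy}\bigr)\bigl(N h_{yy}\bigr)
\]
and then substitute the known formulas for the six bracketed quantities.

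Next I would group the resulting expansion by monomial type. The coefficient of $\partial^2 s/\partial x^2$ comes only from the $g^{xx}h_{xx}$ product; it equals $(r^2F+r^{-2}F^{-1}(\partial_y s)^2)\cdot(-r^{-2}F^{-1}) = -1 - r^{-4}F^{-2}(\partial_y s)^2$, which matches \eqref{equ:H}. The coefficient of $\partial^2 s/\partial y^2$ comes only from $g^{yy}h_{yy}$, and the mixed-derivative coefficient comes only from $2g^{xy}h_{xy}$; both reduce at once to the claimed expressions. The remaining work is to collect the coefficients of $(\partial_x s)^2$, $(\partial_y s)^2$, and the zeroth-order (in $\partial s$) terms, each of which receives contributions from all three products. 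To simplify those, I would repeatedly invoke the identity $(r^2F)' = 2rF + r^2F'$ (and the analogous expansion for $(r^{-2}F^{-1})'$ when it appears implicitly through Lemma~\ref{lem:christoffel}).

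The only real obstacle is bookkeeping: the $(\partial_x s)^2$ coefficient, for instance, picks up one contribution from $(r^2F+r^{-2}F^{-1}(\partial_y s)^2)$ times the $(\partial_x s)^2$-term of $Nh_{xx}$, another from the $r^{-2}F^{-1}(\partial_x s)^2$ piece of $r^6F^2N^2g^{yy}$ times the constant part $\tfrac12(r^2F)'$ of $Nh_{yy}$, and a cross term from $2g^{xy}h_{xy}$. After using $(r^2F)' = 2rF+r^2F'$, these should collapse to $(4r^{-1}+F^{-1}F')(\partial_x s)^2$, and a symmetric computation handles $(\partial_y s)^2$. The constant part of $r^6F^2N^3H$ is simply $r^2F\cdot r + r^2\cdot\tfrac12(r^2F)' = 2r^3F + \tfrac12 r^4 F'$, giving the last two terms in \eqref{equ:H}. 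Since every step is an algebraic identity built from formulas already proved, the argument is a direct verification and no new idea is required.
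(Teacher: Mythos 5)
Your proposal is correct and matches the paper's own one-line proof exactly: both substitute \eqref{equ:inversemetric} and \eqref{equ:h} into $H = g^{ij}h_{ij}$ and collect terms. The spot checks you perform (the $\partial^2_x s$ coefficient and the zeroth-order part $2r^3F+\tfrac12 r^4F'$) are accurate, and the remaining coefficients follow by the same bookkeeping.
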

\begin{proof}
The assertion follows by plugging \eqref{equ:inversemetric} and \eqref{equ:h} into $$H = g^{xx}h_{xx}+g^{xy}h_{xy}+g^{yx}h_{yx}+g^{yy}h_{yy}. $$
\end{proof}

Lastly, we calculate the relation between the mean curvature $H$ and $\Delta r$. To do so, we record the Hessian of $r$ below.

\begin{lemma}
We have the equalities
\begin{equation}\label{equ:hessbar_r}
\bar{\nabla}_a\bar{\nabla}_b r=r F \bar{g}_{ab}+2^{-1}F^{-1}F' \bar{\nabla}_a r\bar{\nabla}_b r+2^{-1}r^4 FF'\bar{\nabla}_a y\bar{\nabla}_b y
\end{equation}
and
\begin{equation}\label{equ:Lapbar_r}
\bar{\Delta}r=3rF+r^2 F'.
\end{equation}
\end{lemma}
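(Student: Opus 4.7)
The plan is to exploit the fact that $r$ is itself one of the background coordinate functions, so that its covariant Hessian degenerates to a single line of Christoffel symbols, all of which are recorded in Lemma~\ref{lem:christoffel}. Concretely, since $\bar{\nabla}_a r = \partial_a r = \delta_a^r$, the usual formula for the Hessian of a function reduces to
\[
\bar{\nabla}_a \bar{\nabla}_b r \;=\; \partial_a \delta_b^r - \bar{\Gamma}_{ab}^c \delta_c^r \;=\; -\bar{\Gamma}_{ab}^r.
\]
Thus only $\bar{\Gamma}_{rr}^r$, $\bar{\Gamma}_{xx}^r$, and $\bar{\Gamma}_{yy}^r$ enter the calculation; all off-diagonal components of $\bar{\nabla}_a \bar{\nabla}_b r$ vanish.

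Next I would verify \eqref{equ:hessbar_r} by matching components. The right-hand side is a sum of three tensors: $\bar{g}_{ab}$ is diagonal with entries $(r^{-2}F^{-1},\,r^2,\,r^2 F)$, while $\bar{\nabla}_a r\,\bar{\nabla}_b r = \delta_a^r\delta_b^r$ contributes only at $(r,r)$, and $\bar{\nabla}_a y\,\bar{\nabla}_b y = \delta_a^y \delta_b^y$ only at $(y,y)$. For the $(x,x)$ slot the claim reduces to $-\bar{\Gamma}_{xx}^r = rF\cdot r^2$, which is immediate. For the $(y,y)$ slot one checks $-\bar{\Gamma}_{yy}^r = rF\cdot r^2 F + \tfrac12 r^4 FF'$, which is exactly $\tfrac12 r^2F(r^2F)'$ after using $(r^2F)' = 2rF + r^2F'$. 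The $(r,r)$ entry is the only mildly delicate check: it requires expanding $(r^{-2}F^{-1})' = -2r^{-3}F^{-1} - r^{-2}F^{-2}F'$ and showing
\[
-\tfrac12 r^2F(r^{-2}F^{-1})' \;=\; rF\cdot r^{-2}F^{-1} + \tfrac12 F^{-1}F',
\]
which is an algebraic identity.

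Finally, \eqref{equ:Lapbar_r} follows by tracing the Hessian against $\bar{g}^{ab}$. Since the Hessian is diagonal in the $(r,x,y)$ frame and $\bar{g}^{ab}$ is diagonal with entries $(r^2 F,\,r^{-2},\,r^{-2}F^{-1})$, one obtains
\[
\bar{\Delta}r \;=\; \bar{g}^{rr}\bigl(rF\cdot r^{-2}F^{-1} + \tfrac12 F^{-1}F'\bigr) + \bar{g}^{xx}\cdot r^3 F + \bar{g}^{yy}\cdot \tfrac12 r^2F(r^2F)',
\]
and a short simplification — using $(r^2F)' = 2rF + r^2 F'$ once more — collapses this sum to $3rF + r^2F'$. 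The whole argument is mechanical; the only genuine ``obstacle'' is bookkeeping with the derivative identities for $F^{-1}$ and $r^2F$ and keeping the signs straight when differentiating $r^{-2}F^{-1}$.
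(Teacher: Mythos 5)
Your proposal is correct and follows essentially the same route as the paper: both reduce the Hessian to $-\bar{\Gamma}_{ab}^r$ via the Christoffel symbols of Lemma~\ref{lem:christoffel}, match the $(r,r)$, $(x,x)$, $(y,y)$ components against the diagonal decomposition $rF\bar{g}_{ab}+2^{-1}F^{-1}F'\,dr^2+2^{-1}r^4FF'\,dy^2$, and obtain the Laplacian by tracing. The component checks you outline are exactly the algebraic identities the paper verifies.
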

\begin{proof}
From \eqref{equ:Gamma},
\begin{align*}
    \bar{\Gamma}_{rr}^r &= \frac 12 r^2F  \left(r^{-2}F^{-1} \right)'=-r^{-1}-2^{-1}F^{-1}F', \\
    \bar{\Gamma}_{xx}^r &= -r^3F, \\
    \bar{\Gamma}_{yy}^r &= -\frac 12 r^2F \left(r^{ 2}F  \right)'=-r^{3}F^2-2^{-1}r^4FF'.
\end{align*}
Therefore,
\begin{align*}
\bar{\nabla}_a\bar{\nabla}_b r  dx^a dx^b=&\left( r^{-1}+2^{-1}F^{-1}F'\right)   dr^2+\left( r^3 F \right)   dx^2+\left( r^{3}F^2+2^{-1}r^4FF'\right)   dy^2\\
=&\big(    r^{-1}  dr^2+  r^3 F  dx^2+  r^{3}F^2     dy^2 \big)+ 2^{-1}F^{-1}F'  dr^2 +2^{-1}r^4FF' dy^2\\
=&rF\bar{g}_{ab} dx^a dx^b+ 2^{-1}F^{-1}F'  dr^2   +2^{-1}r^4FF' dy^2.
\end{align*}
Then \eqref{equ:hessbar_r} follows. \eqref{equ:Lapbar_r} can then be derived directly from \eqref{equ:hessbar_r}.
\end{proof}
\begin{lemma}
We have the equality
\begin{equation}\label{equ:Lap_r}
\begin{split}
\Delta r=-&rF^{1/2}z^{-1}H+  2rF+2^{-1}r^2F' \\
&+ 2^{-1}r^2  F'(1-z^{-2})-2^{-1}r^{-2}F^{-2}F' z^{-2}\left( \frac{\partial s}{\partial y} \right)^2  .
\end{split}
\end{equation}
\end{lemma}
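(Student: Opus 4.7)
The plan is to apply the standard identity
\[
\Delta r = \bar{\Delta}r - \bar{\nabla}^2 r(\nu,\nu) - H\,\nu(r),
\]
which comes from contracting the Gauss formula $\bar{\nabla}_{e_i}e_j = \nabla_{e_i}e_j - h_{ij}\nu$; the minus sign is consistent with the convention of Lemma~\ref{lemma:secondform}, where $h_{ij}$ is obtained by pairing with $-\nu$. Each of the three terms on the right can then be read off from the material already established in this section.

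The first term is given directly by \eqref{equ:Lapbar_r}: $\bar{\Delta}r = 3rF + r^2F'$. For the last term, I would read off $\nu(r)$ from the formula \eqref{equ:normal} for the unit normal together with the relation $N = r^{-1}F^{-1/2}z$, obtaining $\nu(r) = rF^{1/2}z^{-1}$; this immediately produces the summand $-rF^{1/2}z^{-1}H$ appearing in \eqref{equ:Lap_r}.

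For the Hessian term I would substitute $\nu$ into \eqref{equ:hessbar_r}. Since $|\nu|^2 = 1$, the first summand in that formula contributes $rF$, while the remaining summands require $\nu(r)^2$ and $\nu(y)^2$. The first was computed in the previous step, and $\nu(y)$ can also be read off from \eqref{equ:normal}, giving $\nu(y)^2 = r^{-6}F^{-3}z^{-2}(\partial_y s)^2$. As a consistency check, the same two quantities follow from the orthogonal decompositions $|\bar{\nabla}r|^2 = |\nabla r|^2 + \nu(r)^2$ and $|\bar{\nabla}y|^2 = |\nabla y|^2 + \nu(y)^2$ combined with \eqref{equ:gradr}, \eqref{equ:grady}, and the ambient norms $|\bar{\nabla}r|^2 = r^2F$, $|\bar{\nabla}y|^2 = r^{-2}F^{-1}$. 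Plugging these into \eqref{equ:hessbar_r} gives
\[
\bar{\nabla}^2 r(\nu,\nu) = rF + 2^{-1}r^2F'z^{-2} + 2^{-1}r^{-2}F^{-2}F'z^{-2}(\partial_y s)^2.
\]

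Finally I would assemble the three pieces and rewrite the coefficient $r^2F' - 2^{-1}r^2F'z^{-2}$ in the resulting expression as $2^{-1}r^2F' + 2^{-1}r^2F'(1-z^{-2})$, which matches the precise form stated in \eqref{equ:Lap_r}. There is no genuine obstacle here: the lemma is a direct bookkeeping computation, and the only points demanding care are keeping track of the sign convention for $h_{ij}$ (and hence $H$) and splitting the $z^{-2}$-terms in the particular way displayed in the statement.
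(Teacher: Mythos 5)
Your proposal is correct. You invoke the contraction identity $\Delta r = \bar\Delta r - \bar\nabla^2 r(\nu,\nu) - H\,\nu(r)$ (this is exactly \eqref{eq:4.3}, which the paper states, without proof, a section later), and then read off $\nu(r)$ and $\nu(y)$ from the explicit formula \eqref{equ:normal} for the unit normal. The paper proceeds dually: it first projects the ambient Hessian to a tangential two-tensor, $\nabla_i\nabla_j r = \bar\nabla_i\bar\nabla_j r - \bar g(\bar\nabla r,\nu)h_{ij}$, and then traces over the induced metric, which requires the intrinsic quantities $|\nabla r|^2$ and $|\nabla y|^2$ computed in \eqref{equ:gradr} and \eqref{equ:grady}. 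The two routes are algebraically equivalent via $g^{ij} = \bar g^{ij}\big|_{\textup{tang}} = \bar g^{ab} - \nu^a\nu^b$ and the decompositions $|\bar\nabla r|^2 = |\nabla r|^2 + \nu(r)^2$, $|\bar\nabla y|^2 = |\nabla y|^2 + \nu(y)^2$ -- precisely the consistency check you mention. Your version avoids the two preparatory gradient computations \eqref{equ:gradr}--\eqref{equ:grady} but instead uses $\bar\Delta r$ from \eqref{equ:Lapbar_r}; the paper's version never needs the full ambient Laplacian because the tangential trace of $rF\bar g_{ab}$ gives $2rF$ directly. Both are elementary bookkeeping once \eqref{equ:hessbar_r} and \eqref{equ:normal} are in hand, and you handled the sign convention for $h_{ij}$ correctly.
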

\begin{proof}
From $\bar{g}(\bar{\nabla}r,\nu)=rF^{1/2}z^{-1}$ and \eqref{equ:hessbar_r},  we derive
\begin{align*}
\nabla_i\nabla_j r=&\bar{\nabla}_i\bar{\nabla}_j r-\bar{g}(\bar{\nabla}r,\nu) h_{ij}\\
                  = &r F  {g}_{ij}+2^{-1}F^{-1}F'  {\nabla}_i r {\nabla}_j r+2^{-1}r^4 FF' {\nabla}_i y {\nabla}_j y-rF^{1/2}z^{-1} h_{ij}.
\end{align*}
Taking the trace, we obtain
\begin{align*}
\Delta r=2rF+2^{-1}F^{-1}F'|\nabla r|^2 +2^{-1}r^4 FF' |\nabla y|^2 -rF^{1/2}z^{-1} H.	 
\end{align*} 
Using \eqref{equ:gradr} and \eqref{equ:grady}, the above becomes
\begin{align*}
\Delta r=&2rF+2^{-1}F^{-1}F'\left( r^2 F(1-z^{-2}) \right)  \\
&+2^{-1}r^4 FF' \left(r^{-2}F^{-1}-r^{-6}F^{-3}z^{-2}\left(\frac{\partial s}{\partial y} \right)^2 \right)  -rF^{1/2}z^{-1} H.
\end{align*}
Then \eqref{equ:Lap_r} follows by rearranging terms.
\end{proof}

\section{Proof of Theorem~\ref{thm:symmetry} and Theorem~\ref{thm:perturbation}}
In this section, we provide the proofs of Theorem~\ref{thm:symmetry} and Theorem~\ref{thm:perturbation}. Theorem~\ref{thm:symmetry} is separated into Proposition~\ref{prop:ysym} (the $y$-symmetry case) and Proposition~\ref{prop:xsym} ($x$-symmetry case). These two propositions are proved in Section~\ref{sec:3.1} and Section~\ref{sec:3.2} respectively. In Section~\ref{sec:3.3}, we prove Theorem~\ref{thm:perturbation}.\\

Recall that $\Sigma$ is defined as the surface $r=s(x,y)$ for a smooth function $s$ defined on $T^2$.
\begin{lemma}
\label{lemma:specialcase}
We have the equality
\begin{align}\label{main-parametrized}
    Q\left(\Sigma\right) - P_xP_y\left(2r_s^3 - 2^{-1}\right)= \int_0^{P_x}\int_0^{P_y}\left(Hs^4FN - 2s^3 + 2^{-1}\right)\,   dy  dx.
\end{align}
\end{lemma}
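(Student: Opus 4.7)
The identity is a direct bookkeeping exercise once the explicit expressions for $dA$ and $dV$ from Section~2 are in hand; my plan is to reduce each of the two pieces in $Q(\Sigma) = \int_\Sigma Hr\, dA - 6\int_\Omega r\, dV$ to an integral over the coordinate torus $T^2 = [0,P_x]\times[0,P_y]$.

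For the mean curvature term I substitute $r = s(x,y)$ on $\Sigma$ and use the area-form expression $dA = r^3 F N\, dx\, dy$ from \eqref{volforms}. This immediately gives
\[
\int_\Sigma Hr\, dA = \int_0^{P_x}\!\!\int_0^{P_y} H s^4 F N \, dy\, dx,
\]
which accounts for the first summand of the right-hand side of \eqref{main-parametrized} without any further work. For the volume term, since $\Sigma = \{r=s(x,y)\}$ is a graph and the AdS-Melvin metric caps off smoothly at $r=r_s$ (this being the role of the period $P_y = 4\pi/(r_s^2 F'(r_s))$), the enclosed region is $\Omega = \{(r,x,y) : r_s \leq r \leq s(x,y)\}$. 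Using $dV = r\, dr\, dx\, dy$ from \eqref{volforms}, the inner $r$-integral evaluates to $(s^3 - r_s^3)/3$, so
\[
6\int_\Omega r\, dV = \int_0^{P_x}\!\!\int_0^{P_y} 2(s^3 - r_s^3)\, dy\, dx = \int_0^{P_x}\!\!\int_0^{P_y} 2s^3\, dy\, dx \; - \; 2 P_x P_y r_s^3.
\]

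Combining the two pieces gives $Q(\Sigma) = \int(H s^4 F N - 2s^3)\, dy\, dx + 2 P_x P_y r_s^3$, and subtracting $P_xP_y(2r_s^3 - 2^{-1})$ absorbs the $r_s^3$ constant while injecting the extra $+2^{-1}$ into the integrand, yielding \eqref{main-parametrized}. There is no substantive obstacle here; the only conceptual point worth flagging is the identification of the lower boundary of $\Omega$ with the smooth central torus $\{r=r_s\}$, which relies on the topology of $M$ as a solid torus and the smoothness of $\bar{g}$ at $r_s$ guaranteed by the choice of $P_y$.
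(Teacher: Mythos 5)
Your proof is correct and follows essentially the same route as the paper's: substitute the explicit area and volume forms from \eqref{volforms}, carry out the inner $r$-integral over $[r_s,s(x,y)]$, and rearrange. The extra remark about why $\{r=r_s\}$ is the correct inner boundary is a helpful clarification but does not change the argument.
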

\begin{proof}
By \eqref{volforms}, we have
\[\int_{\Sigma} H r\, dA = \int_0^{P_x}\int_0^{P_y}Hs^4FN\,  dx dy\]
and
\begin{align*} 
    \int_{\Omega} r\, dV &= \int_0^{P_x}\int_0^{P_y}\int_{r_{s}}^{s(x,y)}r^2\,  dr dy dx \\
    &= \int_0^{P_x}\int_0^{P_y}\frac 13\left(s(x,y)^3 - r_{s}^3\right)\, dy dx \\
    &=\frac 13\left(-P_xP_yr_s^3 +x      \int_0^{P_x}\int_0^{P_y}s^3\, dy dx\right).
\end{align*}
Therefore, $Q\left(\Sigma\right)$ is equal to
\[\int_{\Sigma} H r\, dV-6\int_{\Omega} r\, dV = 2P_xP_yr_s^3 + \int_0^{P_x}\int_0^{P_y}(Hs^4FN - 2s^3)\,  dy dx.\]
Subtracting $2(P_xP_yr_s^3-2^{-1})$ from both sides, we obtain \eqref{main-parametrized}.
\end{proof}

\subsection{Surfaces with $y$-symmetry}\label{sec:3.1}\hfill\\

We first prove Theorem \ref{thm:symmetry} assuming that $\Sigma$ has $y$-symmetry. That is, the function $s(x,y)$ is independent of $y$. We aim to show the following result.
\begin{prop}
\label{prop:ysym}
Let $\Sigma$ be the graph defined by $r=s(x)$ for a smooth periodic function $s$ with period $P_x$. Then
\[Q\left(\Sigma\right)\geq P_xP_y\left(2r_s^3 - 2^{-1}\right).\]
Equality holds if and only if $s$ is a constant function.
\end{prop}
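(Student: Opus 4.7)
The plan is to combine Lemma~\ref{lemma:specialcase} with a single judicious change of variable. By Lemma~\ref{lemma:specialcase}, with $\partial_y s \equiv 0$, it suffices to show
\[
I := \int_0^{P_x}\bigl(Hs^4FN - 2s^3 + \tfrac12\bigr)\,dx \geq 0,
\]
with equality iff $s$ is constant (the factor $P_y>0$ is harmless). My strategy is to exhibit the integrand as $-\sqrt{G(s)}\,\theta'(x)$ for a natural periodic function $\theta$, and then integrate by parts to recover a manifestly nonnegative expression.

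The first step is an algebraic observation. Let $G(r) := r^4F(r) = r^4 - r - b$, so $G'(r) = 4r^3 - 1$. A short calculation using the explicit form of $F$ gives the two identities
\[
2s^3F + \tfrac12 s^4F' = 2s^3 - \tfrac12 = \tfrac12 G'(s), \qquad \frac{G'(s)}{G(s)} = \frac{4}{s} + \frac{F'(s)}{F(s)}.
\]
Specialising Lemma~\ref{lemma:H} to $\partial_y s \equiv 0$ and using $N = s^{-1}F^{-1/2}z$ (so $Hs^4FN = z^{-2}\cdot s^6F^2N^3H$) together with $z^2 - 1 = (s')^2/G(s)$, one obtains after the cancellation provided by the second identity above
\[
Hs^4FN - \tfrac12 G'(s) = \frac{1}{z^2}\Bigl(-s'' + \frac{(s')^2 G'(s)}{2G(s)}\Bigr), \qquad z^2 = 1 + \frac{(s')^2}{G(s)}.
\]

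Next I would introduce the $P_x$-periodic function $\theta(x) := \arctan\!\bigl(s'(x)/\sqrt{G(s(x))}\bigr)\in (-\pi/2,\pi/2)$, which is well defined since $G(s) > 0$ on $(r_s,\infty)$. From $\tan\theta = s'/\sqrt{G(s)}$ one checks $z^{-2} = \cos^2\theta$ and, by direct differentiation,
\[
\sqrt{G(s)}\,\theta'(x) = \frac{1}{z^2}\Bigl(s'' - \frac{(s')^2 G'(s)}{2G(s)}\Bigr),
\]
so the integrand equals precisely $-\sqrt{G(s)}\,\theta'$. Integration by parts, whose boundary terms vanish by $P_x$-periodicity of $s$ and $\theta$, then yields
\[
I = -\int_0^{P_x}\sqrt{G(s)}\,\theta'\,dx = \int_0^{P_x}\frac{G'(s)\,s'\,\theta}{2\sqrt{G(s)}}\,dx.
\]

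To close the argument I would observe that $F(r_s)=0$ forces $r_s^4 = r_s + b > r_s$ and hence $r_s > 1$, so $G'(s) = 4s^3 - 1 > 0$ throughout $\{s \geq r_s\}$. Since $\theta$ and $s'$ always share a sign, the final integrand is pointwise nonnegative, giving $I \geq 0$; equality forces $s'\theta \equiv 0$, hence $s' \equiv 0$, so $\Sigma$ is a coordinate torus. The main obstacle I anticipate is simply recognising the substitution $\theta = \arctan(s'/\sqrt{G(s)})$ that converts the integrand into an exact derivative up to the factor $\sqrt{G(s)}$; once the identity $2s^3F + \tfrac12 s^4F' = \tfrac12 G'(s)$ is noticed, this substitution is essentially dictated by the structure of the resulting expression, and the remaining steps are routine.
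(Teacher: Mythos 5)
Your proposal is correct and follows essentially the same route as the paper: your $\theta(x)=\arctan\bigl(s'(x)/\sqrt{G(s(x))}\bigr)$ is exactly $\arctan(d\lambda/dx)$ for the paper's auxiliary $\lambda$ defined by $d\lambda/dr=r^{-2}F(r)^{-1/2}$, and your $\sqrt{G(s)}=s^{2}\sqrt{F}$ matches the paper's prefactor, so the key identity, the integration by parts, and the sign argument all coincide. The only cosmetic differences are that you define $\theta$ directly instead of passing through $\lambda$, and you spell out $G'(s)>0$ via $r_s>1$, which the paper leaves implicit.
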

Let us define a function that will assist in the proof of Lemma \ref{prop:ysym}.
\begin{lemma} 
\label{lemma:defs}
Let $\lambda=\lambda(r)$ be a solution to the ordinary differential equation
\begin{equation}
    \frac{d\lambda}{dr} = r^{-2}\left(F(r)\right)^{-1/2}.
\end{equation}
We abuse notation and write $\lambda(x) = \lambda(s(x))$, so that
\begin{equation}
\label{eqn:defs}
\frac{d \lambda}{d x} = r^{-2}F^{-1/2}\frac{d s}{d x}.
\end{equation}
Then
\[-r^{2}F^{1/2}\, \frac{d^2 \lambda}{d x^2} = -\frac{d^2 s}{d x^2} + 2^{-1}r^{-1}F^{-1}(4-r^{-3})\left(\frac{d s}{d x}\right)^2.\]
\end{lemma}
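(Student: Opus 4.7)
The plan is a direct computation using the chain rule, followed by simplification via the explicit form of $F$. Since $r$ is shorthand for $s(x)$ along the curve, differentiating $\frac{d\lambda}{dx} = r^{-2} F^{-1/2}\, \frac{ds}{dx}$ once more will produce a term containing $\frac{d^2 s}{dx^2}$ from the product rule, together with two terms quadratic in $\frac{ds}{dx}$ coming from differentiating $r^{-2}$ and $F^{-1/2}$ via $\frac{dr}{dx} = \frac{ds}{dx}$. Multiplying the resulting expression by $-r^2 F^{1/2}$ is designed precisely to normalize the coefficient of $\frac{d^2 s}{dx^2}$ to $-1$, matching the right-hand side of the claimed identity.

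Carrying this out, I would first write
\[
\frac{d^2\lambda}{dx^2} = r^{-2}F^{-1/2}\frac{d^2 s}{dx^2} + \Bigl(\tfrac{d}{dr}\bigl(r^{-2}F^{-1/2}\bigr)\Bigr)\left(\frac{ds}{dx}\right)^2,
\]
and then compute
\[
\frac{d}{dr}\bigl(r^{-2}F^{-1/2}\bigr) = -2r^{-3}F^{-1/2} - \tfrac{1}{2}r^{-2}F^{-3/2}F'.
\]
Multiplying through by $-r^2 F^{1/2}$ yields
\[
-r^2 F^{1/2}\frac{d^2\lambda}{dx^2} = -\frac{d^2 s}{dx^2} + \Bigl(2r^{-1} + \tfrac{1}{2}F^{-1}F'\Bigr)\left(\frac{ds}{dx}\right)^2.
\]
What remains is to verify the algebraic identity
\[
2r^{-1} + \tfrac{1}{2}F^{-1}F' = \tfrac{1}{2}r^{-1}F^{-1}(4-r^{-3}).
\]

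The key observation, which is really the only nontrivial content of the lemma, is that although both $F$ and $F'$ depend on the parameter $b$, the $b$-terms cancel in this identity. Indeed, using $F(r)=1-r^{-3}-br^{-4}$ and $F'(r)=3r^{-4}+4br^{-5}$, multiplying both sides by $2F$ reduces the claim to
\[
4r^{-1}(1-r^{-3}-br^{-4}) + (3r^{-4}+4br^{-5}) = r^{-1}(4 - r^{-3}),
\]
in which the $br^{-5}$ contributions cancel and the remaining terms collapse to $4r^{-1}-r^{-4}$ on both sides. This cancellation is the whole point and is the only thing worth checking carefully; everything else is mechanical application of the chain rule. There is no real obstacle here, as the definition of $\lambda$ was chosen precisely to make this reduction clean.
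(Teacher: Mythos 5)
Your computation is correct and follows essentially the same route as the paper: differentiate $r^{-2}F^{-1/2}$ by the chain rule, multiply by $-r^2F^{1/2}$, and then observe that $4F+rF'=4-r^{-3}$ (the $b$-terms cancel). The only cosmetic difference is that the paper simplifies $4F+rF'$ before multiplying through, whereas you multiply first and verify the resulting coefficient identity afterward.
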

\begin{proof}
We compute
\begin{align*}
    \frac{d^2 \lambda}{d x^2} &= r^{-2}F^{-1/2}\frac{d^2 s}{d x^2} + \left(\frac{d s}{d x}\right)^2\frac{ d}{dr}\left(r^{-2}F^{-1/2}\right) \\
    &= r^{-2}F^{-1/2}\frac{d^2 s}{d x^2} - 2^{-1}r^{-3}F^{-3/2}\left(4F+rF'\right)\left(\frac{d s}{d x}\right)^2\\
    &= r^{-2}F^{-1/2}\frac{d^2 s}{d x^2} - 2^{-1}r^{-3}F^{-3/2}\left(4-r^{-3}\right)\left(\frac{d s}{d x}\right)^2.
\end{align*}
Multiplying both sides by $r^2F^{1/2}$, the conclusion follows.
\end{proof}
\begin{lemma}
\label{lemma:ysimp}
Define a function $\lambda=\lambda(x)$ by \eqref{eqn:defs}. Then
\begin{equation}
    Hs^4FN - 2s^3 + \frac 12 = -r^2F^{1/2}\, \frac{d^2 \lambda}{d x^2} \left(1 + \left(\frac{d \lambda}{d x}\right)^2\right)^{-1}.
\end{equation}
\end{lemma}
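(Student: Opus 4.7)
The plan is a direct algebraic computation. In the $y$-symmetric setting $\partial s/\partial y = 0$, the auxiliary quantities simplify considerably: from \eqref{def:z} and the definition of $\lambda$ we immediately get
\begin{equation*}
z^2 = 1 + r^{-4}F^{-1}\left(\frac{ds}{dx}\right)^2 = 1 + \left(\frac{d\lambda}{dx}\right)^2,
\qquad N = r^{-1}F^{-1/2}z.
\end{equation*}
Since the denominator $1+(d\lambda/dx)^2$ on the right-hand side of the claim equals $z^2$, I will clear it and aim to show the equivalent identity
\begin{equation*}
z^2\bigl(Hs^4FN - 2s^3 + \tfrac12\bigr) = -r^2F^{1/2}\,\frac{d^2\lambda}{dx^2},
\end{equation*}
and then invoke Lemma~\ref{lemma:defs} to rewrite the right-hand side in terms of $s$.

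First I would compute $z^2\cdot Hs^4FN$. Using $N = r^{-1}F^{-1/2}z$ one has $s^4 F N = r^3 F^{1/2} z$, hence $z^2\cdot Hs^4FN = H\cdot r^3 F^{1/2} z^3 = H\cdot r^6 F^2 N^3 / (r^{-1} F^{-1/2} z^0)$... more cleanly, $H\cdot s^4 F N = z^{-2}\cdot (H\cdot r^6 F^2 N^3)$. Plugging in Lemma~\ref{lemma:H} with $\partial s/\partial y=0$ gives
\begin{equation*}
z^2\,Hs^4FN = -\frac{d^2 s}{dx^2} + (4r^{-1}+F^{-1}F')\left(\frac{ds}{dx}\right)^2 + 2r^3F + \tfrac12 r^4 F'.
\end{equation*}
It remains to add $z^2(-2s^3+\tfrac12) = (-2s^3+\tfrac12) + (-2r^{-1}+\tfrac12 r^{-4})F^{-1}(ds/dx)^2$ and check that the result matches Lemma~\ref{lemma:defs}.

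The calculation then splits into two pieces. For the terms with no $(ds/dx)^2$ factor, I need the identity $2r^3 F + \tfrac12 r^4 F' - 2r^3 + \tfrac12 = 0$. This is where the specific form $F(r)=1-r^{-3}-br^{-4}$ enters: $2r^3(F-1) = -2 - 2br^{-1}$ while $\tfrac12(r^4 F'+1) = 2 + 2br^{-1}$, so the two cancel exactly. For the coefficient of $(ds/dx)^2$ I need to verify
\begin{equation*}
(4r^{-1}+F^{-1}F') + (-2r^{-1}+\tfrac12 r^{-4})F^{-1} = \tfrac12 r^{-1}F^{-1}(4-r^{-3}),
\end{equation*}
which after multiplying by $F$ reduces to $4r^{-1}F + F' - 2r^{-1} + \tfrac12 r^{-4} = \tfrac12 r^{-1}(4-r^{-3})$, another short check using the explicit $F$ and $F'$. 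Once both simplifications are in place, the bracketed expression is exactly the right-hand side of Lemma~\ref{lemma:defs}, and dividing by $z^2 = 1+(d\lambda/dx)^2$ finishes the proof.

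The main obstacle is purely bookkeeping: the two algebraic identities above depend on the precise coefficients in $F(r) = 1 - r^{-3} - br^{-4}$, and both the constant-in-$(ds/dx)^2$ cancellation and the $(4-r^{-3})$ factor must emerge from terms involving $F'$, $F^{-1}F'$, and $r^{-4}F^{-1}$. I expect no conceptual difficulty, only care in tracking the $b$-dependent contributions, which must cancel since the final formula is $b$-independent through the use of $\lambda$.
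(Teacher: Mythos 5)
Your proposal is correct and follows essentially the same route as the paper: you plug the $y$-independent specialization of Lemma~\ref{lemma:H} into $Hs^4FN$, clear the factor $z^2=1+(d\lambda/dx)^2$, verify the two algebraic identities $2r^3F+\tfrac12 r^4F'=2r^3-\tfrac12$ and $4F+rF'=4-r^{-3}$ (the latter appearing in your coefficient check), and conclude via Lemma~\ref{lemma:defs}. The paper does the same thing in a slightly different order --- it keeps the prefactor $r^{-2}F^{-1}N^{-2}$ and absorbs the loose $-2r^3+\tfrac12$ into the bracket rather than multiplying through by $z^2$ at the outset --- but the computations and the final invocation of Lemma~\ref{lemma:defs} are identical.
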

\begin{proof}
Since $\frac{\partial s}{\partial y}=0$, Lemma \ref{lemma:H} implies that
\[H = r^{-6}F^{-2}N^{-3}\left(2r^3F+\frac 12 r^4F' - \frac{d^2 s}{d x^2} + r^{-1}F^{-1}\left(4F+rF'\right)\left(\frac{d s}{d x}\right)^2\right).\]
Note that
\[4F + rF' = 4\left(1-r^{-3} - br^{-4}\right) + r\left(3r^{-4}+4br^{-5}\right) = 4-r^{-3}\]
and
\[s^2FN = 1 + r^{-4}F^{-1}\left(\frac{\partial s}{\partial x}\right)^2+r^{-4}F^{-2}\left(\frac{\partial s}{\partial y}\right)^2 = 1 + r^{-4}F^{-1}\left(\frac{d s}{d x}\right)^2.\]
Therefore
\begin{align*}
    Hs^4N - 2s^3 + \frac 12
    &=r^{-2}F^{-1}N^{-2}\left(2r^3F+\frac 12 r^4F' - \frac{d^2 s}{d x^2} + r^{-1}F^{-1}\left(4F+rF'\right)\left(\frac{d s}{d x}\right)^2\right) - 2r^3 + \frac 12 \\
    &= r^{-2}F^{-1}N^{-2}\left(2r^3 - \frac 12 - \frac{d^2 s}{d x^2} + r^{-1}F^{-1}(4-r^{-3})\left(\frac{d s}{d x}\right)^2\right) - 2r^3 + \frac 12 \\
    &= r^{-2}F^{-1}N^{-2}\left(-r^{-4}F^{-1}\left(2r^3 - \frac 12\right)\left(\frac{d s}{d x}\right)^2 - \frac{d^2 s}{d x^2} + r^{-1}F^{-1}(4-r^{-3})\left(\frac{d s}{d x}\right)^2\right) \\
    &= r^{-2}F^{-1}N^{-2}\left(-\frac{d^2 s}{d x^2} + 2^{-1}r^{-4}F^{-1}(4r^3 - 1)\left(\frac{d s}{d x}\right)^2\right) \\
    &= \left(-\frac{d^2 s}{d x^2} + 2^{-1}r^{-4}F^{-1}(4r^3 - 1)\left(\frac{d s}{d x}\right)^2\right)\left(1 + r^{-4}F^{-1}\left(\frac{d s}{d x}\right)^2\right)^{-1}.
\end{align*}
By Lemma \ref{lemma:defs}, the last expression is equal to
\[-r^2F^{1/2}\, \frac{d^2 \lambda}{d x^2}\left(1 + \left(\frac{d \lambda}{d x}\right)^2\right)^{-1},\]
concluding the proof.
\end{proof}
\begin{proof}[Proof of Proposition~\ref{prop:ysym}]
In view of Lemma \ref{lemma:specialcase}, it suffices to show that 
\[\int_0^{P_x}\int_0^{P_y}\left(Hs^4FN - 2s^3 + 2^{-1}\right)\,   dy  dx \geq  0.\]
Since $s$ is independent of $y$, this is equivalent to
\[\int_0^{P_x}\left(Hs^4FN - 2s^3 + \frac 12\right)\,   dx \geq 0.\]
By Lemma \ref{lemma:ysimp},
\[\int_0^{P_x}\left(Hs^4FN - 2s^3 + \frac 12\right)\,   dx = \int_0^{P_x}-\left(s^2\sqrt F\, \frac{d^2 \lambda}{d x^2}\right) \left(1 + \left(\frac{d \lambda}{d x}\right)^2\right)^{-1}\, dx, \]
which may be rewritten as
\[\int_0^{P_x}-\left(s^2\sqrt F \frac{d}{d x}\left(\arctan \frac{d \lambda}{d x}\right)\right) \, dx.\]
Through integration by parts, this is equal to
\begin{align*}
&\left(-s^2\sqrt F \arctan \frac{d \lambda}{d x}\right)\bigg\vert_{x=0}^{P_x} +\int_0^{P_x}\arctan\left(\frac{d \lambda}{d x}\right)\frac{d}{d x}(s^2\sqrt F)\, dx \\
&= \int_0^{P_x}\frac{d s}{d x}\frac{d}{d s}(s^2\sqrt F)\arctan\frac{d \lambda}{d x}\, dx \\
&= \int_0^{P_x}\frac{d \lambda}{d x}\, s^2\sqrt{F}\frac{d}{d s}(s^2\sqrt F)\arctan\frac{d \lambda}{d x}\, dx \\
&\geq 0,
\end{align*}
where the final inequality holds because the integrand is nonnegative:
\[\arctan\left(\frac{d \lambda}{d x}\right)\frac{d \lambda}{d x}, \  s^2\sqrt{F}, \ \frac{d}{d s}(s^2\sqrt F) \geq 0.\]
In order for equality to hold, we require $\frac{d\lambda}{dx}=0$ identically, so that $\lambda(s(x))$ is a constant function. Since $\lambda$ is strictly increasing, $s(x)$ must be a constant function as well. This completes the proof of our first special case.
\end{proof}

\subsection{Surfaces with $x$-symmetry}\label{sec:3.2}\hfill\\

Here, we prove Theorem \ref{thm:symmetry} assuming that $\Sigma$ has $x$-symmetry.
\begin{prop}
\label{prop:xsym}
Let $\Sigma$ be the graph defined by $r=s(y)$ for a periodic smooth function $s$ with period $P_y$. Then
\[Q\left(\Sigma\right)\geq P_xP_y\left(2r_s^3 - 2^{-1}\right).\]
Equality holds if and only if $s$ is a constant function.
\end{prop}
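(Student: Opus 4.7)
The argument parallels Proposition~\ref{prop:ysym} closely, with the substitution adapted to the $y$-direction (where the metric coefficient is $r^2F$ instead of $r^2$). By Lemma~\ref{lemma:specialcase}, since $s$ depends only on $y$, it suffices to show that
\[
\int_0^{P_y}\left( H s^4 F N - 2 s^3 + \tfrac{1}{2} \right) dy \geq 0.
\]
Specializing Lemma~\ref{lemma:H} to $\partial_x s = 0$, and exploiting the algebraic identity $2 r^3 F + \tfrac{1}{2} r^4 F' = 2 r^3 - \tfrac{1}{2}$ (which follows directly from $F = 1 - r^{-3} - b r^{-4}$), one can simplify the integrand to
\[
H s^4 F N - 2 s^3 + \tfrac{1}{2} = \frac{-F^{-1}\,\frac{d^2 s}{d y^2} + C(r) \left( \frac{d s}{d y} \right)^2}{1 + r^{-4} F^{-2} \left( \frac{d s}{d y}\right)^2},
\]
where $C(r) = r^{-1} F^{-2}\left( 2 + r^{-3} + 2 b r^{-4} \right)$.

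The key step is the analog of Lemma~\ref{lemma:defs}. The plan is to define $\mu = \mu(r)$ by $\tfrac{d \mu}{d r} = r^{-2} F^{-1}$ (note the exponent is now $-1$ rather than $-1/2$, reflecting the extra factor of $F^{1/2}$ in the $y$-direction length element). Writing $\mu(y) = \mu(s(y))$, one has $\tfrac{d \mu}{d y} = r^{-2} F^{-1} \tfrac{d s}{d y}$ and
\[
\frac{d^2 \mu}{d y^2} = r^{-2} F^{-1} \frac{d^2 s}{d y^2} - r^{-3} F^{-2} \left( 2 F + r F' \right) \left( \frac{d s}{d y} \right)^2.
\]
The crucial identity $2 F + r F' = 2 + r^{-3} + 2 b r^{-4}$ then shows that multiplying by $-r^2$ reproduces the numerator above, while $\bigl(\tfrac{d\mu}{dy}\bigr)^2$ matches the non-constant part of the denominator. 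Consequently, the integrand collapses to $-r^2 \tfrac{d}{dy}\arctan\!\bigl( \tfrac{d \mu}{d y} \bigr)$.

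Integrating by parts over $[0, P_y]$ (the boundary term vanishes by periodicity of $s$), and using $\tfrac{d s}{d y} = r^2 F\, \tfrac{d \mu}{d y}$, we arrive at
\[
\int_0^{P_y} \left( H s^4 F N - 2 s^3 + \tfrac{1}{2} \right) dy = \int_0^{P_y} 2 r^3 F \, \frac{d \mu}{d y} \arctan\!\left( \frac{d \mu}{d y} \right) dy \geq 0,
\]
since $r > r_s$ implies $F > 0$ and $t \arctan t \geq 0$ for all $t \in \mathbb{R}$. Equality forces $\tfrac{d \mu}{d y} \equiv 0$, and since $\mu$ is strictly increasing in $r$, this is equivalent to $s$ being a constant function. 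The only real obstacle is guessing the correct substitution $\mu$: the denominator matching requires $(d\mu/dr)^2 = r^{-4} F^{-2}$, and one then needs the cross-term identity $2 F + r F' = 2 + r^{-3} + 2 b r^{-4}$ to handle the numerator. Both constraints are miraculously compatible for this particular $F$, after which the argument is routine.
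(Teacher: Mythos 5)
Your proposal is correct and follows essentially the same route as the paper: the same substitution $\tfrac{d\mu}{dr}=r^{-2}F^{-1}$ (the paper's $\lambda$ in Lemma~\ref{lemma:defs2}), the same collapse of the integrand to $-s^2\,\tfrac{d}{dy}\arctan(\mu')$ via the identity $2F+rF'=2+r^{-3}+2br^{-4}$, and the same integration by parts using periodicity and $\tfrac{ds}{dy}=r^2F\,\tfrac{d\mu}{dy}$. The algebra checks out, including the rigidity argument from $\mu'\equiv 0$ and strict monotonicity of $\mu$ in $r$.
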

We follow a similar strategy as the proof for $y$-symmetric surfaces.
\begin{lemma}
\label{lemma:defs2}
Let $\lambda = \lambda(r)$ be a solution to the ordinary differential equation
\begin{equation}
    \frac{d\lambda}{dr} = r^{-2}\left(F(r)\right)^{-1}.
\end{equation}
We abuse notation and write $\lambda(y) = \lambda(s(y))$, so that
\begin{equation}
\label{equ:defs2}
\frac{d \lambda}{d y} = r^{-2}F^{-1}\frac{d s}{d y}.    
\end{equation}
Then
\[-r^2\,\frac{d^2 \lambda}{d y^2} = -F^{-1}\frac{d^2 s}{d y^2} + r^{-1}F^{-2}(2F+rF')\left(\frac{d s}{d y}\right)^2.\]
\end{lemma}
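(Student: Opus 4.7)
The proof parallels that of Lemma \ref{lemma:defs} almost line by line, the only change being that the defining ODE for $\lambda$ now uses $F^{-1}$ instead of $F^{-1/2}$. The plan is simply to apply the chain rule to $\frac{d\lambda}{dy} = r^{-2}F^{-1}\frac{ds}{dy}$ once more in $y$, remembering that $r = s(y)$, and then to rearrange the resulting expression.

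Concretely, I would first differentiate the identity $\frac{d\lambda}{dy} = r^{-2}F^{-1}\frac{ds}{dy}$ with respect to $y$. By the product rule together with the chain rule this yields
\[
\frac{d^2\lambda}{dy^2} \;=\; r^{-2}F^{-1}\,\frac{d^2 s}{dy^2} \;+\; \left(\frac{ds}{dy}\right)^2 \frac{d}{dr}\bigl(r^{-2}F^{-1}\bigr).
\]
Next, I would compute the $r$-derivative directly:
\[
\frac{d}{dr}\bigl(r^{-2}F^{-1}\bigr) \;=\; -2r^{-3}F^{-1} - r^{-2}F^{-2}F' \;=\; -r^{-3}F^{-2}\bigl(2F + rF'\bigr).
\]
Substituting this back into the previous display and multiplying through by $-r^2$ produces precisely the claimed identity.

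I do not anticipate any real obstacle: the argument is a two-line chain-rule computation, and unlike Lemma \ref{lemma:ysimp} the explicit form of $F$ from \eqref{def:F} is not needed here, since $F$ and $F'$ enter only formally. The only minor care required is to keep track of signs and exponents of $F$; one can cross-check against Lemma \ref{lemma:defs} under the formal substitution $F^{-1/2}\mapsto F^{-1}$, which accounts exactly for the difference between the coefficient $2^{-1}r^{-1}F^{-1}(4-r^{-3})$ appearing in Lemma \ref{lemma:defs} and the coefficient $r^{-1}F^{-2}(2F+rF')$ appearing here.
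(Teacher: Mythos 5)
Your proof is correct and follows exactly the same two-step chain-rule computation as the paper: differentiate $\frac{d\lambda}{dy}=r^{-2}F^{-1}\frac{ds}{dy}$ in $y$, evaluate $\frac{d}{dr}(r^{-2}F^{-1})=-r^{-3}F^{-2}(2F+rF')$, and multiply by $-r^2$. No differences worth noting.
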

\begin{proof}
We compute
\begin{align*}
    \frac{d^2 \lambda}{d y^2} &= r^{-2}F^{-1}\frac{d^2 s}{d y^2} + \left(\frac{d s}{d y}\right)^2\frac{d}{ dr}\left(\frac{1}{r^2F}\right) \\
    &= r^{-2}F^{-1}\frac{d^2 s}{d y^2} + \left(\frac{d s}{d y}\right)^2\frac{d}{dr}\left(\frac{1}{r^2F}\right) \\
    &= r^{-2}F^{-1}\frac{d^2 s}{d y^2} - r^{-3}F^{-2}(2F+rF')\left(\frac{d s}{d y}\right)^2. 
\end{align*}
Multiplying both sides by $-r^2$ implies the desired result.
\end{proof}
\begin{lemma}
\label{lemma:xsimp}
Define a function $\lambda=\lambda(y)$ by \eqref{equ:defs2}. Then
\begin{equation}
   Hs^4FN - 2s^3 + \frac 12 = -r^2\,\frac{d^2 \lambda}{d y^2} \left(1 + \left(\frac{d \lambda}{d y}\right)^2\right)^{-1}.
\end{equation}
\end{lemma}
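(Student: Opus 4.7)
The plan is to mimic the proof of Lemma~\ref{lemma:ysimp}, with the roles of $x$ and $y$ interchanged and with Lemma~\ref{lemma:defs2} taking the place of Lemma~\ref{lemma:defs}. First I would specialize the mean curvature formula in Lemma~\ref{lemma:H} to the $x$-symmetric case $\partial s/\partial x = 0$, obtaining
\begin{equation*}
r^6 F^2 N^3 H = -F^{-1}\frac{d^2 s}{dy^2} + \left(4r^{-1}F^{-1} + \frac{3}{2}F^{-2}F'\right)\left(\frac{ds}{dy}\right)^2 + 2r^3 F + \frac{1}{2}r^4 F'.
\end{equation*}
Since $\partial s/\partial x = 0$, the definitions of $N$ and $z$ give $N^2 = r^{-2}F^{-1}z^2$ with $z^2 = 1 + r^{-4}F^{-2}(ds/dy)^2$, so $r^{-2}F^{-1}N^{-2} = z^{-2}$. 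Hence $H s^4 F N$ is $z^{-2}$ times the right-hand side displayed above.

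Next I would write $Hs^4FN - 2s^3 + \tfrac12 = z^{-2}\bigl\{[\text{bracket}] - (2r^3 - \tfrac12)z^2\bigr\}$ and expand. The crucial algebraic identity to verify is
\begin{equation*}
2r^3 F + \tfrac{1}{2}r^4 F' = 2r^3 - \tfrac{1}{2},
\end{equation*}
which follows directly from $F = 1 - r^{-3} - br^{-4}$ and $F' = 3r^{-4} + 4br^{-5}$ (the $b$-dependent terms cancel). This annihilates the constant part of the bracket against the $1$ in $z^2$, leaving only a pure $(ds/dy)^2$ coefficient to check.

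The remaining task is to confirm
\begin{equation*}
4r^{-1}F^{-1} + \tfrac{3}{2}F^{-2}F' - (2r^3 - \tfrac{1}{2})r^{-4}F^{-2} = r^{-1}F^{-2}(2F + rF'),
\end{equation*}
by plugging in the explicit forms of $F$ and $F'$ (again all $b$-terms conspire to cancel). Granted this, the bracket equals precisely the right-hand side of Lemma~\ref{lemma:defs2}, namely $-r^2\, d^2\lambda/dy^2$. Finally, from \eqref{equ:defs2} we have $(d\lambda/dy)^2 = r^{-4}F^{-2}(ds/dy)^2$, so $z^2 = 1 + (d\lambda/dy)^2$, yielding the stated formula.

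The main obstacle is bookkeeping the two algebraic cancellations in $b$; they work because $F(r) = 1 - r^{-3} - br^{-4}$ is tuned so that the quantity $2r^3 F + \tfrac12 r^4 F'$ is $b$-independent, and this same coincidence (through the equality $2F + rF' = 2 + r^{-3} + 2br^{-4}$) reproduces the exact weight appearing in Lemma~\ref{lemma:defs2}. Once these two identities are in hand, the rest is immediate.
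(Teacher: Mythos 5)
Your proposal is correct and follows essentially the same route as the paper's own proof: specialize Lemma~\ref{lemma:H} to $\partial s/\partial x = 0$, substitute $r^{-2}F^{-1}N^{-2}=z^{-2}$ with $z^2 = 1 + r^{-4}F^{-2}(ds/dy)^2$, use the $b$-free identity $2r^3 F + \tfrac{1}{2}r^4 F' = 2r^3 - \tfrac{1}{2}$ to absorb the constant, reduce the remaining $(ds/dy)^2$-coefficient to $r^{-1}F^{-2}(2F+rF')$, and invoke Lemma~\ref{lemma:defs2}. The two algebraic identities you isolate are exactly the simplifications carried out in the paper.
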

\begin{proof}
Since $\frac{\partial s}{\partial x}=0$, Lemma \ref{lemma:H} implies that
\[H = r^{-6}F^{-2}N^{-3}\left(2r^3F + \frac 12 r^4 F' - F^{-1} \frac{d^2 s}{d y^2}+\frac{8F+3rF'}{2rF^2}\left(\frac{d s}{d y}\right)^2\right).\]
Thus
\begin{align*}
Hs^4FN - 2s^3 + \frac 12 &=r^{-2}F^{-1}N^{-2}\left(2r^3 - \frac 12 - F^{-1} \frac{d^2 s}{d y^2}+\frac{8F+3rF'}{2rF^2}\left(\frac{d s}{d y}\right)^2\right) - 2r^3 + \frac 12 \\
&= r^{-2}F^{-1}N^{-2}\left(-r^{-4}F^{-2}\left(2r^3 - \frac 12\right)\left(\frac{d s}{d y}\right)^2 - F^{-1} \frac{d^2 s}{d y^2}+\frac{8F+3rF'}{2rF^2}\left(\frac{d s}{d y}\right)^2\right) \\
&= r^{-2}F^{-1}N^{-2}\left(-F^{-1} \frac{d^2 s}{d y^2}+\frac{2F+rF'}{rF^2}\left(\frac{d s}{d y}\right)^2\right) \\
&= \left(-F^{-1} \frac{d^2 s}{d y^2}+\frac{2F+rF'}{rF^2}\left(\frac{d s}{d y}\right)^2\right)\left(1 + r^{-4}F^{-2}\left(\frac{d s}{d y}\right)^2\right)^{-1}.
\end{align*}
By Lemma \ref{lemma:defs2}, the final expression is equal to
\[-r^2\,\frac{d^2 \lambda}{d y^2} \left(1 + \left(\frac{d \lambda}{d y}\right)^2\right)^{-1},\]
as desired.
\end{proof}
\begin{proof}[Proof of Proposition ~\ref{prop:xsym}]
By Lemma \ref{lemma:specialcase}, it suffices to show that
\[\int_0^{P_x}\int_0^{P_y}\left(Hs^4FN - 2s^3 + 2^{-1}\right)\,   dy  dx \geq 0.\]
Since $s$ is independent of $x$, this is equivalent to
\[\int_0^{P_y}\left(Hs^4FN - 2s^3 + \frac 12\right)\,   dy \geq 0.\]
By Lemma \ref{lemma:xsimp}, we have
\begin{align*}
    \int_0^{P_y}\left(Hs^4FN - 2s^3 + \frac 12\right)\,   dy &= \int_0^{P_y} -s^2\,\frac{d^2 \lambda}{d y^2} \left(1 + \left(\frac{d \lambda}{d y}\right)^2\right)^{-1}  dy \\
    &=\int_0^{P_y} -s^2\frac{d}{d y}\arctan\left(\frac{d \lambda}{d y}\right)  dy.
\end{align*}
Using integration by parts, this is equal to
\begin{align*}
    &\left(-s^2\arctan\left(\frac{d \lambda}{d y}\right)\right)\bigg\vert_{y=0}^{P_y} + \int_0^{P_y}2s\frac{d s}{d y}\arctan\left(\frac{d \lambda}{d y}\right) dy \\
    &= \int_0^{P_y}2s^3F\frac{d \lambda}{d y}\arctan\left(\frac{d \lambda}{d y}\right) dy \\
    &\geq 0,
\end{align*}
where the final inequality holds since the integrand is nonnegative:
\[2s^3F, \ \frac{d\lambda}{d y}\arctan \left(\frac{d\lambda}{d y}\right) \geq 0.\]
In order for equality to hold, we require $\frac{d\lambda}{dy}=0$ identically, so that $\lambda(s(y))$ is a constant function. Since $\lambda$ is strictly increasing, $s(y)$ must be a constant function as well. 
\end{proof}

\subsection{Perturbation}\label{sec:3.3}\hfill \\

We now prove Theorem \ref{thm:perturbation}, where the surface $\Sigma$ is a small perturbation of a constant-height surface. For reference, we reproduce the statement below.
\begin{prop}
\label{prop:perturbation}
Fix $r_0>r_s$ and a smooth function $\phi(x,y)$ on the torus. For $\varepsilon$ small, consider 
\begin{equation}
\label{perturbation_def}
\tilde{s}_\varepsilon(x,y) = r_0 + \varepsilon \phi(x,y), 
\end{equation} 
and the graph $\tilde{\Sigma}_\epsilon$ given by
\begin{equation}
\tilde{\Sigma}_\varepsilon=\{ (\tilde{s}_\varepsilon(x,y),x,y)\, |\, (x,y)\in T^2 \}.
\end{equation}
Then we have
\begin{equation}
\label{firstorder}
\frac{d}{d\varepsilon}Q(\tilde{\Sigma}_{\varepsilon})\bigg|_{\varepsilon=0} =0,
\end{equation}
and
\begin{equation}
\label{secondorder}
\frac{d^2}{d\varepsilon^2}Q(\tilde{\Sigma}_{\varepsilon})\bigg|_{\varepsilon=0} \geq 0.
\end{equation}
Moreover, equality holds in \eqref{secondorder} if and only if $\phi$ is a constant function.
\end{prop}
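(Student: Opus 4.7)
The plan is to combine Lemma~\ref{lemma:specialcase} with a pointwise Taylor expansion in $\varepsilon$. Let
\[
I_\varepsilon(x,y) := H\tilde s_\varepsilon^4 FN - 2\tilde s_\varepsilon^3 + \tfrac12,
\]
so that \eqref{main-parametrized} gives
\[
Q(\tilde\Sigma_\varepsilon) - P_xP_y(2r_s^3 - \tfrac12) = \int_0^{P_x}\int_0^{P_y} I_\varepsilon\, dy\, dx.
\]
Noting from \eqref{equ:H} that $H\tilde s_\varepsilon^4 FN = z^{-2} R$, where $R$ is the right hand side of \eqref{equ:H} and $z$ is as in \eqref{def:z}, I would substitute $\tilde s_\varepsilon = r_0 + \varepsilon\phi$ and expand $I_\varepsilon = I_0 + \varepsilon I_1 + \varepsilon^2 I_2 + O(\varepsilon^3)$. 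The three goals are $I_0 \equiv 0$, $\int I_1 = 0$, and $\int I_2 \geq 0$ with equality if and only if $\phi$ is constant.

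At order zero, all derivatives of $s$ vanish and only the terms $2r^3 F + \tfrac12 r^4 F'$ of $R$ survive; by \eqref{def:F} they simplify to $2r^3 - \tfrac12$, giving $I_0 \equiv 0$. At order one, the contributions to $I_\varepsilon$ come from Taylor-expanding $2r^3 F + \tfrac12 r^4 F'$ in $r$, from the $-\partial_x^2 s$ and $-F^{-1}\partial_y^2 s$ terms of $R$, and from $-2\tilde s_\varepsilon^3$. The identity $\tfrac{d}{dr}(2r^3 F + \tfrac12 r^4 F') = 6r^2$, verified directly from \eqref{def:F}, produces $6r_0^2\phi$ from the first source, cancelling the $-6r_0^2\phi$ from $-2\tilde s_\varepsilon^3$, whence
\[
I_1 = -\phi_{xx} - F_0^{-1}\phi_{yy},
\]
a total divergence on $T^2$ that integrates to zero by periodicity, proving \eqref{firstorder}.

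At order two, I would assemble: the quadratic Taylor expansion of $2r^3 F + \tfrac12 r^4 F'$ (governed by $\tfrac{d^2}{dr^2}(2r^3 F + \tfrac12 r^4 F') = 12r$, which produces a $+6r_0\phi^2$ that exactly cancels the $-6r_0\phi^2$ from $-2\tilde s_\varepsilon^3$); the cross term $F_0^{-2}F_0'\phi\phi_{yy}$ coming from the $r$-linear expansion of $-F^{-1}$ against $\partial_y^2 s$; the quadratic-in-gradient terms already present in $R$; and the leading $z^{-2}$ correction times $R_0 = 2r_0^3 - \tfrac12$. Using the identity $4F + rF' = 4 - r^{-3}$ to combine the $\phi_x^2$ contributions then yields
\[
I_2 = F_0^{-2}F_0'\phi\phi_{yy} + \frac{4 - r_0^{-3}}{2r_0 F_0}\phi_x^2 + \frac{2 + r_0^{-3} + 2br_0^{-4}}{r_0 F_0^2}\phi_y^2.
\]
Integration by parts in $y$ (legitimate because $\phi$ is periodic) converts $\phi\phi_{yy}$ into $-\phi_y^2$, and the identity $2F + rF' = 2 + r^{-3} + 2br^{-4}$ collapses the resulting coefficient of $\phi_y^2$ to $\tfrac{2}{r_0 F_0}$. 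Since $r_0 > r_s > 1$ and $F_0 > 0$, both surviving coefficients are strictly positive, hence
\[
\int_0^{P_x}\int_0^{P_y} I_2\, dy\, dx \geq 0,
\]
with equality if and only if $\phi_x \equiv \phi_y \equiv 0$ on $T^2$, which by connectedness means $\phi$ is constant. This proves \eqref{secondorder} together with its rigidity clause.

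The principal obstacle is the bookkeeping at order two: each of $z^{-2}$, $R$, and $-2\tilde s_\varepsilon^3$ contributes several $\varepsilon^2$ pieces, and the crossing between the $r$-expansion and the $\partial s$-expansion must be tracked carefully. What makes the computation succeed, rather than produce an indefinite cross term or a surviving $\phi^2$ piece, is the special algebraic structure of $F(r) = 1 - r^{-3} - br^{-4}$: the three identities $\tfrac{d^2}{dr^2}(2r^3 F + \tfrac12 r^4 F') = 12r$, $4F + rF' = 4 - r^{-3}$, and $2F + rF' = 2 + r^{-3} + 2br^{-4}$ orchestrate precisely the cancellations and sign controls needed.
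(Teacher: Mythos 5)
Your proposal is correct and follows essentially the same route as the paper: you invoke Lemma~\ref{lemma:specialcase} to reduce everything to the integrand $H\tilde s_\varepsilon^4 FN - 2\tilde s_\varepsilon^3 + \tfrac12$, expand it to second order in $\varepsilon$ to arrive at exactly the expression in Lemma~\ref{lemma:perturbationsimp}, then integrate by parts in $y$ to convert the $\phi\phi_{yy}$ term and obtain the same manifestly nonnegative form $2\int\bigl(\tfrac{4F_0+r_0F_0'}{2r_0F_0}\phi_x^2 + \tfrac{2}{r_0F_0}\phi_y^2\bigr)\,dy\,dx$. The observations that $\tfrac{d}{dr}(2r^3F+\tfrac12 r^4F') = 6r^2$ (so $2r^3F+\tfrac12 r^4F' \equiv 2r^3 - \tfrac12$) and that $4F+rF'=4-r^{-3}$, $2F+rF'=2+r^{-3}+2br^{-4}$ power the cancellations in both proofs; the paper simply groups the first of these cancellations into a single line rather than tracking the derivative identity explicitly. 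No gap.
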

For later convenience, we write
\begin{align}
    F_0=F(r_0),\ F'_0=F'(r_0)
\end{align}
for the values of $F$ and $F'$ on $\tilde{\Sigma}_0$. 
Let $d\tilde{A}_{\varepsilon}$ denote the area form of $\tilde{\Sigma}_{\varepsilon}$ and $\tilde{\Omega}_{\varepsilon}$ denote the region enclosed by $\tilde{\Sigma}_{\varepsilon}$. We use the tilde in $\tilde{\Sigma}_{\varepsilon}$ and analogous expressions to distinguish them from $\Sigma_t$ in the next section, where we consider the normal flow. For brevity, other quantities of $\tilde{\Sigma}_{\varepsilon}$ will not carry the tildes or subscripts. For instance, we still write $H$ for the mean curvature of $\tilde{\Sigma}_{\varepsilon}$.

\begin{lemma}
\label{lemma:perturbationequations} 
On the surface $\tilde{\Sigma}_\epsilon$, the following expansion holds.
\begin{equation}
\label{r2FN2}
\frac{1}{r^2FN^2} = 1-\varepsilon^2\left(\frac{\partial \phi}{\partial x}\right)^2\frac{1}{r^4F_0}-\varepsilon^2\left(\frac{\partial \phi}{\partial y}\right)^2\frac{1}{r^4F^2_0} + O(\varepsilon^3),
\end{equation}
\begin{equation}
\label{F}
\frac 1F = \frac{1}{F_0} - \varepsilon \frac{\phi F'_0}{F_0^2}+O(\varepsilon^2).
\end{equation}
\end{lemma}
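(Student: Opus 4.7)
The plan is direct Taylor expansion in $\varepsilon$, using the expressions already in hand. I would begin with the simpler identity \eqref{F}. On $\tilde{\Sigma}_\varepsilon$ one has $r = r_0 + \varepsilon \phi(x,y)$, so Taylor expansion of the smooth function $F$ about $r_0$ gives $F = F_0 + \varepsilon \phi F_0' + O(\varepsilon^2)$. Inverting via the scalar identity $(a+\delta)^{-1} = a^{-1} - a^{-2}\delta + O(\delta^2)$ with $a=F_0$ and $\delta = \varepsilon\phi F_0' + O(\varepsilon^2)$ yields $F^{-1} = F_0^{-1} - \varepsilon \phi F_0'/F_0^2 + O(\varepsilon^2)$, which is \eqref{F}.

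For \eqref{r2FN2}, I would feed the perturbation into the defining formula \eqref{equ:defN} for $N^2$. Since $\partial s/\partial x = \varepsilon\, \partial \phi/\partial x$ and $\partial s/\partial y = \varepsilon\, \partial \phi/\partial y$, multiplying through by $r^2 F$ gives the exact identity
\begin{equation*}
r^2 F N^2 = 1 + \varepsilon^2 r^{-4} F^{-1}\left(\frac{\partial \phi}{\partial x}\right)^2 + \varepsilon^2 r^{-4} F^{-2}\left(\frac{\partial \phi}{\partial y}\right)^2.
\end{equation*}
The right-hand side is $1+O(\varepsilon^2)$, so applying $(1+u)^{-1} = 1-u+O(u^2)$ gives
\begin{equation*}
\frac{1}{r^2 F N^2} = 1 - \varepsilon^2 r^{-4} F^{-1}\left(\frac{\partial \phi}{\partial x}\right)^2 - \varepsilon^2 r^{-4} F^{-2}\left(\frac{\partial \phi}{\partial y}\right)^2 + O(\varepsilon^4).
\end{equation*}
To reach exactly the form \eqref{r2FN2}, I would then replace the coefficients $F^{-1}$ and $F^{-2}$ (evaluated at $r = r_0 + \varepsilon \phi$) by $F_0^{-1}$ and $F_0^{-2}$ at the $\varepsilon^2$ order, using the expansion from \eqref{F} already established; since each substitution introduces only an $O(\varepsilon)$ correction multiplied by an $\varepsilon^2$ coefficient, the resulting error is absorbed into $O(\varepsilon^3)$.

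There is no real obstacle here — the lemma is a bookkeeping step whose only purpose is to standardize the leading-order $\varepsilon$ corrections in a form convenient for the second-order computation of $Q(\tilde\Sigma_\varepsilon)$ that drives Proposition~\ref{prop:perturbation}. The only point requiring mild care is the consistent tracking of error orders: terms already carrying a factor of $\varepsilon^2$ may have their slowly varying coefficients frozen at $\varepsilon=0$ without affecting the stated $O(\varepsilon^3)$ remainder, while in \eqref{F} the expansion must be carried one order higher since that factor will later be multiplied by quantities of order one.
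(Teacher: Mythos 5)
Your proof is correct and follows the same line as the paper's: a Taylor expansion of $F$ about $r_0$ for \eqref{F}, and an expansion of $(r^2FN^2)^{-1}=(1+O(\varepsilon^2))^{-1}$ using the exact formula for $N^2$ followed by freezing the $\varepsilon^2$-order coefficients at $\varepsilon=0$ for \eqref{r2FN2}. The order bookkeeping you carry out, including the justification that substituting $F_0$ for $F$ in an $\varepsilon^2$-order coefficient introduces only an $O(\varepsilon^3)$ error, is exactly what the paper's brief proof implicitly relies on.
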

\begin{proof}
Since all partial derivatives of $r$ with respect to $\varepsilon$ are first-order in $\varepsilon$, equation \eqref{r2FN2} follows from the definition of $N$ as
\[r^2FN^2 = 1+\left(\frac{\partial s}{\partial x}\right)^2\frac{1}{r^4F}+\left(\frac{\partial s}{\partial y}\right)^2\frac{1}{r^4F^2}.\]
Similarly, 
\begin{align*}
    F &= F(r_0 + \varepsilon\phi) = F_0 + \varepsilon\phi F'_0 + O(\varepsilon^2). \\
\end{align*}
Taking the reciprocal, we have 
\begin{align*}
\frac 1F &= \frac{1}{F_0} - \varepsilon \frac{\phi F'_0}{F_0^2}+O(\varepsilon^2).
\end{align*}
\end{proof}
\begin{lemma}
\label{lemma:perturbationsimp}
On the surface $\tilde{\Sigma}_\varepsilon$, we have the following expansion in $\varepsilon$:
\begin{align*}
Hr^4FN &- 2r^3 + \frac 12  \\ &= -\varepsilon\left(\frac{\partial^2\phi}{\partial x^2}+\frac{1}{F_0}\frac{\partial^2\phi}{\partial y^2}\right) + \varepsilon^2 \left(\frac{4F_0+r_0F'_0}{2r_0F_0}\left(\frac{\partial \phi}{\partial x}\right)^2+\frac{2F_0+r_0F'_0}{r_0F_0^2}\left(\frac{\partial \phi}{\partial y}\right)^2+\frac{\phi F'_0}{F_0^2}\frac{\partial^2\phi}{\partial y^2}\right) + O(\varepsilon^3).
\end{align*}
\end{lemma}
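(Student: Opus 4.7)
The proof is a direct Taylor expansion in $\varepsilon$, but it becomes essentially automatic after recognizing one algebraic identity. My starting point is to use Lemma~\ref{lemma:H} to rewrite
\[
  H r^{4}FN = \frac{\mathcal{B}}{r^{2}FN^{2}},
\]
where $\mathcal{B}$ denotes the right-hand side of \eqref{equ:H}. The key observation, which follows from a direct substitution for $F(r)=1-r^{-3}-br^{-4}$, is the identity
\[
  2r^{3}F(r)+\tfrac12 r^{4}F'(r)\equiv 2r^{3}-\tfrac12.
\]
This identity is precisely the reason that the coordinate torus $r=r_{0}$ realizes equality in \eqref{main}, and it is what will force the $\varepsilon^{0}$ contribution in the expansion to vanish.

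Using this identity, I would rewrite
\[
  H r^{4}FN-2r^{3}+\tfrac12 = \frac{1}{r^{2}FN^{2}}\Bigl[\mathcal{B}-(2r^{3}-\tfrac12)\,r^{2}FN^{2}\Bigr].
\]
From \eqref{equ:defN} one has $r^{2}FN^{2}-1 = r^{-4}F^{-1}(\partial s/\partial x)^{2}+r^{-4}F^{-2}(\partial s/\partial y)^{2}$, and applying the identity once more to each of the coefficients $(2r^{3}-\tfrac12)r^{-4}F^{-1}$ and $(2r^{3}-\tfrac12)r^{-4}F^{-2}$ allows me to cancel the derivative-free part of $\mathcal{B}$ and combine the coefficients of $(\partial s/\partial x)^{2}$ and $(\partial s/\partial y)^{2}$. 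After discarding the cross-derivative term $2r^{-4}F^{-2}(\partial s/\partial x)(\partial s/\partial y)\,\partial^{2}s/\partial x\partial y$, which is $O(\varepsilon^{3})$, the bracket collapses to
\[
  -\frac{\partial^{2}s}{\partial x^{2}}-F^{-1}\frac{\partial^{2}s}{\partial y^{2}}+\frac{4F+rF'}{2rF}\Bigl(\frac{\partial s}{\partial x}\Bigr)^{2}+\frac{2F+rF'}{rF^{2}}\Bigl(\frac{\partial s}{\partial y}\Bigr)^{2}+O(\varepsilon^{3}).
\]

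The final step is to substitute $s=r_{0}+\varepsilon\phi$. Because each derivative of $s$ carries an $\varepsilon$, the $(\partial s)^{2}$ coefficients only need to be evaluated at $r_{0}$ to leading order, giving the claimed $\varepsilon^{2}$ coefficients $(4F_{0}+r_{0}F'_{0})/(2r_{0}F_{0})$ and $(2F_{0}+r_{0}F'_{0})/(r_{0}F_{0}^{2})$ of $(\partial\phi/\partial x)^{2}$ and $(\partial\phi/\partial y)^{2}$. The term $-F^{-1}\partial^{2}s/\partial y^{2}$ must be expanded one further order using the expansion of $1/F$ in Lemma~\ref{lemma:perturbationequations}, which produces the mixed contribution $\varepsilon^{2}\,\phi F'_{0}F_{0}^{-2}\,\partial^{2}\phi/\partial y^{2}$. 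Since the prefactor $1/(r^{2}FN^{2}) = 1+O(\varepsilon^{2})$ multiplies a quantity of order $\varepsilon$, it affects only the remainder, and collecting terms yields the stated expansion.

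The main obstacle is recognizing the identity $2r^{3}F+\tfrac12 r^{4}F'=2r^{3}-\tfrac12$ at the outset. Without it, one is forced into a brute-force expansion and must verify two separate cancellations by hand: the vanishing of the $\varepsilon^{0}$ term, and the vanishing of the $\varepsilon^{1}$ contribution coming from $\phi$ with no derivatives. Both cancellations ultimately rest on the same algebraic identity, but they become considerably less transparent in that approach.
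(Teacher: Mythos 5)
Your proposal is correct and follows essentially the same route as the paper: writing $Hr^4FN = \mathcal{B}/(r^2FN^2)$ from Lemma~\ref{lemma:H}, absorbing the subtraction of $2r^3-\tfrac12$ into the bracket via $r^2FN^2$, discarding the manifestly $O(\varepsilon^3)$ terms, simplifying the coefficients of $(\partial s/\partial x)^2$ and $(\partial s/\partial y)^2$, and then substituting $s=r_0+\varepsilon\phi$ with the expansion of $F^{-1}$ from Lemma~\ref{lemma:perturbationequations}. The only cosmetic difference is that you name the identity $2r^3F+\tfrac12 r^4F'=2r^3-\tfrac12$ up front and use it to organize the bookkeeping, whereas the paper applies it in the last step of the first display; the computations are otherwise identical.
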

\begin{proof}
Note that
\[\frac{4F+rF'}{2rF}=\frac{4F_0+r_0F'_0}{2r_0F_0}+O(\varepsilon), \ \frac{2F+rF'}{rF^2}=\frac{2F_0+r_0F'_0}{r_0F_0^2}+O(\varepsilon).\]
Using these equations and Lemma \ref{lemma:perturbationequations}, we compute
\begin{align*}
    Hr^4FN = & \ \frac{1}{r^2FN^2}\left(2r^3F + \frac 12 r^4F' - \frac{\partial^2 s}{\partial x^2} - \frac 1F \frac{\partial^2 s}{\partial y^2}+\frac{4F+rF'}{rF}\left(\frac{\partial s}{\partial x}\right)^2+\frac{8F+3rF'}{2rF^2}\left(\frac{\partial s}{\partial y}\right)^2+O(\varepsilon^3)\right) \\
    = & \ 2r^3F + \frac 12 r^4F' - \frac{\partial^2 s}{\partial x^2} - \frac 1F \frac{\partial^2 s}{\partial y^2}+\frac{4F+rF'}{rF}\left(\frac{\partial s}{\partial x}\right)^2+\frac{8F+3rF'}{2rF^2}\left(\frac{\partial s}{\partial y}\right)^2 \\
    & - \left(2r^3F + \frac 12 r^4F'\right)\left(\left(\frac{\partial s}{\partial x}\right)^2\frac{1}{r^4F}+\left(\frac{\partial s}{\partial y}\right)^2\frac{1}{r^4F^2}\right) + O(\varepsilon^3) \\
    = & \ 2r^3F + \frac 12 r^4F' - \frac{\partial^2 s}{\partial x^2} - \frac 1F \frac{\partial^2 s}{\partial y^2}+\frac{4F+rF'}{2rF}\left(\frac{\partial s}{\partial x}\right)^2+\frac{2F+rF'}{rF^2}\left(\frac{\partial s}{\partial y}\right)^2+O(\varepsilon^3) \\
    = & \ 2r^3 - \frac 12 - \frac{\partial^2 s}{\partial x^2} - \frac 1F \frac{\partial^2 s}{\partial y^2}+\frac{4F+rF'}{2rF}\left(\frac{\partial s}{\partial x}\right)^2+\frac{2F+rF'}{rF^2}\left(\frac{\partial s}{\partial y}\right)^2+O(\varepsilon^3).
\end{align*}
Combining this with \eqref{perturbation_def}, it follows that
\begin{align*}
    & Hr^4FN - 2r^3 + \frac 12 \\
    = &- \frac{\partial^2 s}{\partial x^2} - \frac 1F \frac{\partial^2 s}{\partial y^2}+\frac{4F+rF'}{2rF}\left(\frac{\partial s}{\partial x}\right)^2+\frac{2F+rF'}{rF^2}\left(\frac{\partial s}{\partial y}\right)^2+O(\varepsilon^3) \\
    = &- \varepsilon\frac{\partial^2\phi}{\partial x^2} - \varepsilon\frac{\partial^2\phi}{\partial y^2}\left(\frac{1}{F_0} - \varepsilon \frac{\phi F'_0}{F_0^2}+O(\varepsilon^2)\right) + \varepsilon^2\frac{4F_0+rF'_0}{2rF_0}\left(\frac{\partial \phi}{\partial x}\right)^2+\varepsilon^2\frac{2F_0+rF'_0}{rF_0^2}\left(\frac{\partial \phi}{\partial y}\right)^2 + O(\varepsilon^3) \\ 
    = &- \varepsilon\left(\frac{\partial^2\phi}{\partial x^2}+\frac{1}{F_0}\frac{\partial^2\phi}{\partial y^2}\right) + \varepsilon^2 \left(\frac{4F_0+rF'_0}{2rF_0}\left(\frac{\partial \phi}{\partial x}\right)^2+\frac{2F_0+rF'_0}{rF_0^2}\left(\frac{\partial \phi}{\partial y}\right)^2+\frac{\phi F'_0}{F_0^2}\frac{\partial^2\phi}{\partial y^2}\right) + O(\varepsilon^3)\\
    = &- \varepsilon\left(\frac{\partial^2\phi}{\partial x^2}+\frac{1}{F_0}\frac{\partial^2\phi}{\partial y^2}\right) + \varepsilon^2 \left(\frac{4F_0+r_0F'_0}{2r_0F_0}\left(\frac{\partial \phi}{\partial x}\right)^2+\frac{2F_0+r_0F'_0}{r_0F_0^2}\left(\frac{\partial \phi}{\partial y}\right)^2+\frac{\phi F'_0}{F_0^2}\frac{\partial^2\phi}{\partial y^2}\right) + O(\varepsilon^3),
\end{align*}
as desired.
\end{proof}
\begin{proof}[Proof of Proposition~\ref{prop:perturbation}]
By Lemma \ref{lemma:perturbationsimp}, we have
\begin{align*}
    & \frac{d}{d\varepsilon}\left(\int_{\tilde{\Sigma}_\varepsilon} H r\, dV_{\tilde{\Sigma}_\varepsilon}-6\int_{\tilde{\Omega}_\varepsilon} r\, dV_{\tilde{\Omega}_\varepsilon}\right) \\ 
    = & -\int\limits_0^{P_x}\int\limits_0^{P_y}\left(\frac{\partial^2\phi}{\partial x^2}+\frac{1}{F_0}\frac{\partial^2\phi}{\partial y^2}\right)\,\ dy dx \\
    = & -\int\limits_0^{P_y}\int\limits_0^{P_x}\frac{\partial^2\phi}{\partial x^2}\, dx dy -\frac{1}{F_0}\int\limits_0^{P_x}\int\limits_0^{P_y}\frac{\partial^2\phi}{\partial y^2}\, dy dx \\
    = & - \int\limits_0^{P_y}\left(\frac{\partial\phi}{\partial x}\bigg\vert_{x=0}^{P_x}\right) dy - \frac{1}{F_0}\int\limits_0^{P_x}\left(\frac{\partial\phi}{\partial y}\bigg\vert_{x=0}^{P_y}\right) dx \\
    = & \ 0.
\end{align*}
Similarly,
\begin{align*}
    & \frac{d^2}{d\varepsilon^2}\left(\int_{\tilde{\Sigma}_\varepsilon} H r\, dV_{\tilde{\Sigma}_\varepsilon}-6\int_{\tilde{\Omega}_\varepsilon} r\, dV_{\tilde{\Omega}_\varepsilon}\right) \\
    = &\ 2\int\limits_0^{P_x}\int\limits_0^{P_y}\left(\frac{4F_0+r_0F'_0}{2r_0F_0}\left(\frac{\partial \phi}{\partial x}\right)^2+\frac{2F_0+r_0F'_0}{r_0F_0^2}\left(\frac{\partial \phi}{\partial y}\right)^2+\frac{\phi F'_0}{F_0^2}\frac{\partial^2\phi}{\partial y^2}\right) dy dx.
\end{align*}
We compute that
\[\int\limits_0^{P_y}\frac{\phi F'_0}{F_0^2}\frac{\partial^2 \phi}{\partial y^2}\, dy = \frac{F'_0}{F_0^2}\left(\phi\frac{\partial\phi}{\partial y}\bigg\vert_{y=0}^{P_y} - \int\limits_0^{P_y}\left(\frac{\partial \phi}{\partial y}\right)\, dy\right) = -\frac{F'_0}{F_0^2}\int\limits_{0}^{P_y}\left(\frac{\partial\phi}{\partial y}\right)^2 \, dy.\]
Therefore, the second derivative is
\[2\int\limits_0^{P_x}\int\limits_0^{P_y}\left(\frac{4F_0+r_0F'_0}{2r_0F_0}\left(\frac{\partial \phi}{\partial x}\right)^2+\frac{2F_0}{r_0F_0^2}\left(\frac{\partial \phi}{\partial y}\right)^2\right) dy dx,\]
which is nonnegative since the integrand is nonnegative. Moreover, equality holds if and only if $\frac{\partial\phi}{\partial x}=\frac{\partial\phi}{\partial y}=0$, i.e. $\phi$ is a constant function. This completes the proof of Theorem \ref{thm:perturbation}.
\end{proof}

\section{Normal Flow}

In this section, we deform a surface $\Sigma$ by a normal flow and study the evolution of  $Q(\Sigma)$ defined in \eqref{def:Q}. Let $\rho$ be a function defined on $M$. Let $\varphi:[0,T_0)\times T^2\to M$ be a family of embedded graphs satisfying
\begin{align}\label{equ:flow}
\frac{\partial  \varphi}{\partial t } = \rho\nu.
\end{align}
Here $\nu$ is the outward unit normal. Later we will specialize to the speed $\rho=r^{-1}$ and consider
\begin{align}\label{equ:trueflow}
\frac{\partial  \varphi}{\partial t } = r^{-1}\nu.
\end{align}
Nevertheless, we will keep $\rho$ unspecified in computing evolution equations in order to make its role apparent. We write $\Sigma_t$ for the image of $\varphi(t,\cdot)$ and write $dA_t$ for the area form of $\Sigma_t$. Recall that the quantity $Q(\Sigma_t)$ is defined in \eqref{def:Q}. 
The goal of this section is to prove the following monotonicity formula.
\begin{prop}
\label{pro:monotonicity}
Under the flow \eqref{equ:trueflow}, it holds that
\begin{equation}\label{equ:monotone}
 \frac{d}{d t}Q(\Sigma_t)  \leq 0. 
\end{equation}
Moreover, equality holds if and only if $\Sigma_t$ is a coordinate torus.
\end{prop}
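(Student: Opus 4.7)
The plan is to apply standard first-variation identities for a general normal-flow speed $\rho$, then specialize to $\rho = r^{-1}$ and use the formulas from Section~2 to exhibit the integrand as a pointwise nonpositive expression. Under $\partial_t\varphi = \rho\nu$ one has $\frac{d}{dt}dA = \rho H\, dA$, $\frac{d}{dt}H = -\Delta_{\Sigma_t}\rho - \rho(|A|^2 + \overline{\mathrm{Ric}}(\nu,\nu))$, $\frac{d}{dt}(r|_{\Sigma_t}) = \rho\,\overline{\nabla}_\nu r$, and $\frac{d}{dt}\int_{\Omega_t}r\, dV = \int_{\Sigma_t}r\rho\, dA$. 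Inserting these into the definition of $Q$ and integrating the cross term $\int r \Delta_{\Sigma_t}\rho\, dA$ by parts on the closed surface $\Sigma_t$ yields
\begin{equation*}
\frac{d}{dt}Q(\Sigma_t) = \int_{\Sigma_t}\Bigl[-\rho\,\Delta_{\Sigma_t}r + H\rho\,\overline{\nabla}_\nu r + \rho r(H^2 - |A|^2) - \rho r\,\overline{\mathrm{Ric}}(\nu,\nu) - 6\rho r\Bigr] dA.
\end{equation*}

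Setting $\rho = r^{-1}$, I substitute $\overline{\nabla}_\nu r = rF^{1/2}z^{-1}$ and equation \eqref{equ:Lap_r}; the two linear-in-$H$ contributions then combine cleanly into $2HF^{1/2}z^{-1}$, while $\rho r = 1$ strips the conformal weight from the remaining terms. For the $H^2 - |A|^2$ term I invoke the Gauss equation in two dimensions, $H^2 - |A|^2 = 2K_{\Sigma_t} - 2\overline{\mathrm{Sec}}(T\Sigma_t)$, where $\overline{\mathrm{Sec}}(T\Sigma_t)$ denotes the ambient sectional curvature of the tangent plane to $\Sigma_t$. Since each $\Sigma_t$ is a graph over the torus, Gauss--Bonnet gives $\int_{\Sigma_t}K_{\Sigma_t}\, dA = 0$ and the intrinsic curvature contribution drops out after integration. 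What survives is an explicit integrand in $F, F', z, \partial_y s$, the two ambient curvatures $\overline{\mathrm{Ric}}(\nu,\nu)$ and $\overline{\mathrm{Sec}}(T\Sigma_t)$, and the constant $-6$.

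The last step is to package everything into a manifestly nonpositive form. I compute the ambient curvatures in the $\overline{g}$-orthonormal frame adapted to $\partial_r, \partial_x, \partial_y$, and simplify using the identities $4F+rF' = 4-r^{-3}$ and $2F+rF' = 2-br^{-4}$ coming from $F = 1-r^{-3}-br^{-4}$. I expect the answer to decompose schematically as
\begin{equation*}
\frac{d}{dt}Q(\Sigma_t) = -\int_{\Sigma_t}\Bigl[A(r)\bigl(1-z^{-1}\bigr)^2 + B(r)\,z^{-2}(\partial_y s)^2\Bigr] dA,
\end{equation*}
with coefficients $A, B > 0$ for $r > r_s$, so that the integrand is pointwise nonpositive and vanishes precisely when $z \equiv 1$. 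By \eqref{def:z} this forces $\partial_x s = \partial_y s = 0$, i.e. $\Sigma_t$ is a coordinate torus.

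The main obstacle is this final algebraic reduction. The cancellation is delicate: the constant $-6$ must exactly compensate the ambient curvature terms on a coordinate torus so that $\frac{d}{dt}Q$ vanishes at equilibrium (consistent with Theorem \ref{thm:perturbation}), while the excess beyond this equilibrium must group into the perfect-square decomposition above. The role of the particular speed $\rho = r^{-1}$ is exactly that $\rho r = 1$ cleans up the $H^2-|A|^2$ and Ricci terms, and $\rho\,\overline{\nabla}_\nu r = F^{1/2}z^{-1}$ pairs correctly with the coefficient of $H$ in \eqref{equ:Lap_r} to complete the square in $H$ before applying Gauss--Bonnet.
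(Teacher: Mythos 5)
Your setup is correct: the first-variation identities, the integration by parts of $\int r\,\Delta_{\Sigma_t}\rho\,dA_t$, the specialization $\rho = r^{-1}$ so that $\rho r = 1$, and the use of the Gauss equation together with Gauss--Bonnet on a torus all mirror the paper's computation in Lemma~\ref{lemma:monotonicitysimp}. However, there is a genuine gap in the final step, and the expected perfect-square form
\[
\frac{d}{dt}Q(\Sigma_t) \;=\; -\int_{\Sigma_t}\Bigl[A(r)\bigl(1-z^{-1}\bigr)^2 + B(r)\,z^{-2}(\partial_y s)^2\Bigr]\,dA_t
\]
cannot hold. After substituting $\bar\nabla_\nu r = rF^{1/2}z^{-1}$ and \eqref{equ:Lap_r}, and discarding $\int R\,dA_t = 0$ by Gauss--Bonnet, the integrand still contains the term $2F^{1/2}z^{-1}H$. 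The mean curvature $H$ depends on \emph{second} derivatives of the height function $s$, while $F,F',z,\partial_y s$ depend only on $s$ and its first derivatives, so no pointwise algebraic identity can express the integrand purely in your proposed variables. In other words, you are expecting an identity where the paper proves an inequality.

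The missing mechanism is the static inequality of Lemma~\ref{lem:static},
\[
\bar\nabla_a\bar\nabla_b r - \bar\Delta r\cdot\bar g_{ab} - r\,\bar R_{ab} \;\geq\; -\tfrac{2b}{r^3}\,\bar g_{ab},
\]
which, contracted with $\nu\otimes\nu$ and combined with the relation $\Delta r = \bar\Delta r - \bar\nabla^2 r(\nu,\nu) - H\,\bar g(\bar\nabla r,\nu)$, gives the pointwise bound $H\,\bar g(\bar\nabla r,\nu) + \Delta r + r\,\overline{\mathrm{Ric}}(\nu,\nu) \leq \tfrac{2b}{r^3}$. This is precisely what absorbs the troublesome $H$ term (together with one copy of $\Delta r$ and the $\overline{\mathrm{Ric}}$ term) and reduces $\frac{d}{dt}Q(\Sigma_t)$ to an inequality $\leq \int_{\Sigma_t}\rho(-2\Delta r - rR)\,dA_t$; the constant $-6$ is then exactly cancelled by $-r\bar R + \tfrac{2b}{r^3}$ using $\bar R = -6 + 2b r^{-4}$. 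Finally, with $\rho r = 1$ you get $\int\rho r R\,dA_t = \int R\,dA_t = 0$, and the remaining integral $\int \frac{2\Delta r}{r}\,dA_t$ is shown to be $\int\frac{2|\nabla r|^2}{r^2}\,dA_t \geq 0$ not by a pointwise computation but by the divergence theorem applied to $\nabla r/r$. Without the static inequality you have no way to eliminate $H$, so the reduction you anticipate in your last paragraph cannot close.
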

In Section~\ref{sec:4.1}, we record several ingredients that are needed to prove Proposition~\ref{pro:monotonicity}. These ingredients include the evolution equations, the Gauss equation and a static inequality for the AdS-Melvin space. In Section~\ref{sec:4.2} we combine the formulas in Section~\ref{sec:4.1} to finish the proof of Proposition~\ref{pro:monotonicity}.
\subsection{Basic formulas}\label{sec:4.1}\hfill \\

We start with the evolution equations along the flow \eqref{equ:flow}. The evolution equations for the area form and the mean curvature are given by
\begin{equation}\label{equ:evoarea}
\frac{\partial }{\partial t}\, dA_t=\rho H\, d A_t
\end{equation}
and
\begin{equation}\label{equ:evoH}
\frac{\partial H}{\partial t} = -\Delta \rho - \rho\,  \overline{\textup{Ric}} (\nu,\nu ) - \rho\vert A\vert^2.
\end{equation}
Here $\overline{\textup{Ric}}$ stands for the Ricci curvature of the ambient space. The derivations of the above equations are provided in Appendix~\ref{sec:B} for  the reader's convenience. \\

Next, we record the Gauss equation. Let $R$ and $\bar{R}$ be the scalar curvature of $\Sigma$ and $M$ respectively. Then we have 
\begin{equation}\label{eq:4.2}
R-H^2+\vert A\vert^2 = \bar R - 2\overline{\text{Ric}}(\nu,\nu).
\end{equation}
Recall that $\nabla$ and $\bar{\nabla}$ are the Levi-Civita connection on $\Sigma_t$ and $M$ respectively. Given a smooth function $\psi$ on $M$, we have
\begin{equation}\label{eq:4.3}
\Delta \psi = \bar\Delta\psi - \bar\nabla^2 \phi(\nu,\nu) - H\bar{g}( \bar\nabla\psi, \nu) .
\end{equation}

Lastly, we need a static inequality for $r$.
\begin{lemma}\label{lem:static}
In the AdS-Melvin space with metric $\bar{g}$ given by \eqref{metric}, the coordinate function $r$ satisfies
\begin{equation}\label{eq:4.4}
  \bar\nabla_a\bar\nabla_b r - \bar\Delta r\cdot \bar g_{ab} - r\cdot \bar R_{ab}\geq -\frac{2b}{r^3}\bar g_{ab}.
\end{equation}
\end{lemma}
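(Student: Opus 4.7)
The plan is to verify the tensor inequality \eqref{eq:4.4} by a direct computation in the coordinate basis $(r,x,y)$. Since the AdS-Melvin metric \eqref{metric} is diagonal with entries depending only on $r$, every tensor appearing in \eqref{eq:4.4} is diagonal in this basis with components that are functions of $r$ alone, so \eqref{eq:4.4} reduces to three scalar inequalities, one in each diagonal slot.

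For the Hessian and Laplacian of $r$, I would simply read off $\bar\nabla_a\bar\nabla_b r-\bar\Delta r\,\bar g_{ab}$ from \eqref{equ:hessbar_r} and \eqref{equ:Lapbar_r}, using that $\bar\nabla_a r=\delta^r_a$ and $\bar\nabla_a y=\delta^y_a$. For the Ricci term, I would compute $\bar R_{ab}$ directly from the Christoffel symbols of Lemma \ref{lem:christoffel}. This is a routine but tedious exercise for a diagonal, $r$-only metric; the off-diagonal components vanish and the diagonal ones work out to
\[
\bar R_{rr}=-\frac{2}{r^2}-\frac{5F'}{2rF}-\frac{F''}{2F},\quad \bar R_{xx}=-r^2(2F+rF'),\quad \bar R_{yy}=-2r^2F^2-\tfrac52 r^3FF'-\tfrac12 r^4FF''.
\]

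Assembling the three diagonal components of the left-hand side of \eqref{eq:4.4}, the $xx$ entry collapses to $0$, while the $rr$ and $yy$ entries each reduce to a single expression proportional to the combination $4F'+rF''$. The only place where the explicit form of $F$ is used is the algebraic identity
\[
4F'(r)+rF''(r)=-4br^{-5},
\]
which is immediate from $F=1-r^{-3}-br^{-4}$. Substituting yields
\[
\bar\nabla_a\bar\nabla_b r-\bar\Delta r\,\bar g_{ab}-r\bar R_{ab}=-\frac{2b}{r^3}\bar g_{ab}+\frac{2b}{r}\,\bar\nabla_a x\,\bar\nabla_b x,
\]
and since $\bar\nabla_a x\,\bar\nabla_b x$ is a manifestly nonnegative symmetric $2$-tensor, the inequality \eqref{eq:4.4} follows at once. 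The main obstacle is the bookkeeping in the Ricci computation; the real content is the identity for $4F'+rF''$, which encodes the Einstein--Maxwell origin of the metric and forces the extra nonnegative term to live purely in the $x$-direction, orthogonal to the plane of the Maxwell two-form $dr\wedge dy$.
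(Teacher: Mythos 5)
Your proof is correct and follows essentially the same route as the paper: read off $\bar\nabla_a\bar\nabla_b r - \bar\Delta r\,\bar g_{ab}$ from \eqref{equ:hessbar_r}--\eqref{equ:Lapbar_r}, compute $\bar R_{ab}$ explicitly (your diagonal components agree with Lemma~\ref{lem:R} once one account for a typo in the displayed $\bar R_{yy}$ in the appendix's proof, which should read $-2r^2F^2-\tfrac52 r^3FF'-\tfrac12 r^4FF''$, matching the coefficient in \eqref{equ:Ric}), and then invoke the algebraic identity $2F'+\tfrac12 rF''=-2br^{-5}$, i.e.\ $4F'+rF''=-4br^{-5}$. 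The paper writes the resulting equality as $\bar\nabla_a\bar\nabla_b r - \bar\Delta r\,\bar g_{ab} - r\bar R_{ab} = -\frac{2b}{r^3}\bigl(r^{-2}F^{-1}\bar\nabla_a r\,\bar\nabla_b r + r^2F\,\bar\nabla_a y\,\bar\nabla_b y\bigr)$, while you write the equivalent $-\frac{2b}{r^3}\bar g_{ab}+\frac{2b}{r}\bar\nabla_a x\,\bar\nabla_b x$; both make the sign of the defect manifest, and the inequality follows.
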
 

\begin{proof}
Combining \eqref{equ:hessbar_r}, \eqref{equ:Lapbar_r} and \eqref{equ:Ric}, we have
\begin{align*}
    \bar\nabla_a\bar\nabla_b r - \bar\Delta r\cdot \bar g_{ab} - r\cdot \bar R_{ab}=\left(2F'+2^{-1}r F''\right) (F^{-1}\bar{\nabla}_ar\bar{\nabla}_br +r^4 F\, \bar{\nabla}_ay\bar{\nabla}_by ).
\end{align*}
From \eqref{def:F}, $2F'+2^{-1}r F''=-2b r^{-5}$. Therefore,
\begin{align*}
 \bar\nabla_a\bar\nabla_b r - \bar\Delta r\cdot \bar g_{ab} - r\cdot \bar R_{ab}=-\frac{2b}{r^3}(r^{-2}F^{-1}\,\bar{\nabla}_a r\bar{\nabla}_b r +r^2 F\, \bar{\nabla}_a y\bar{\nabla}_b y ).
\end{align*}
From \eqref{metric}, this implies
\begin{equation*}
  \bar\nabla_a\bar\nabla_b r - \bar\Delta r\cdot \bar g_{ab} - r\cdot \bar R_{ab}\geq -\frac{2b}{r^3}\bar g_{ab}.
\end{equation*}
\end{proof}

\subsection{Monotonicity}\label{sec:4.2} \hfill \\

We prove in this section the monotonicity inequality \eqref{equ:monotone}. Combining the formulas in Section~\ref{sec:4.1}, we calculate $\frac{d}{dt}Q(\Sigma_t)$ with a general speed as the following.
\begin{lemma}
\label{lemma:monotonicitysimp}
Under the flow \eqref{equ:flow}, it holds that
\begin{equation}\label{equ:monomid}
 \frac{d}{d t}Q(\Sigma_t)  \leq \int_{\Sigma_t}  \rho(-2\Delta r -   r R)\, dA_t. 
\end{equation}
\end{lemma}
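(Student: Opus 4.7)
The plan is to differentiate $Q(\Sigma_t)$ directly and then systematically apply the ingredients already in hand: the evolution equations \eqref{equ:evoarea}--\eqref{equ:evoH}, integration by parts on the closed surface $\Sigma_t$, the relation \eqref{eq:4.3} between the intrinsic and ambient Laplacians, the Gauss equation \eqref{eq:4.2}, and finally the static inequality \eqref{eq:4.4}. By the product rule, together with $\partial_t\, dA_t = \rho H\, dA_t$, $\partial_t r|_{\Sigma_t} = \rho\,\bar{g}(\bar{\nabla} r,\nu)$ (since $r$ is a coordinate function on $M$ and the surface moves with speed $\rho\nu$), and \eqref{equ:evoH}, one obtains an expression for $\tfrac{d}{dt}\int_{\Sigma_t} Hr\, dA_t$ involving $-r\Delta\rho$, $\rho H^2 r$, $\rho H \bar g(\bar\nabla r,\nu)$, and ambient curvature terms. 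The first variation of the bulk piece is simply $\tfrac{d}{dt}\int_{\Omega_t} r\, dV = \int_{\Sigma_t}\rho r\, dA_t$, so the $-6$ factor in the definition of $Q$ contributes $-6\rho r$ to the integrand. Integration by parts on the closed torus $\Sigma_t$ then transfers the Laplacian off $\rho$ onto $r$, converting $-r\Delta\rho$ into $-\rho\Delta r$.

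I would next apply \eqref{eq:4.3} with $\psi = r$ to rewrite the mixed term $H\bar{g}(\bar{\nabla} r,\nu)$ as $\bar{\Delta} r - \bar{\nabla}^2 r(\nu,\nu) - \Delta r$, producing the second $-\Delta r$ needed to assemble $-2\Delta r$. The Gauss equation \eqref{eq:4.2} then replaces $H^2 - |A|^2$ by $R - \bar R + 2\,\overline{\textup{Ric}}(\nu,\nu)$. After collecting everything, the integrand takes the schematic form
\[
\rho\Big(-2\Delta r \pm rR + \big[\bar{\Delta} r - \bar{\nabla}^2 r(\nu,\nu) + r\,\overline{\textup{Ric}}(\nu,\nu) - r\bar R - 6r\big]\Big),
\]
so the claim reduces to showing that the bracketed expression is pointwise non-positive. (The precise sign of the $rR$ contribution from this calculation is immaterial for the subsequent proof of Proposition~\ref{pro:monotonicity}, since with $\rho = r^{-1}$ one has $\int_{\Sigma_t} R\, dA_t = 0$ by Gauss--Bonnet on the torus.)

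The bracket is exactly what the static inequality \eqref{eq:4.4} is designed to control. Contracting \eqref{eq:4.4} against the unit normal $\nu$ gives
\[
\bar{\Delta} r - \bar{\nabla}^2 r(\nu,\nu) + r\,\overline{\textup{Ric}}(\nu,\nu) \le \tfrac{2b}{r^3}.
\]
A direct computation from the explicit metric \eqref{metric}, or equivalently tracing \eqref{eq:4.4} with $F = 1 - r^{-3} - br^{-4}$, yields the scalar-curvature identity $r\bar R = -6r + \tfrac{2b}{r^3}$, so that $-r\bar R - 6r = -\tfrac{2b}{r^3}$. The two copies of $\tfrac{2b}{r^3}$ cancel exactly and the bracket is at most zero, giving \eqref{equ:monomid}.

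The main obstacle is the static inequality itself, but this is already proved as Lemma~\ref{lem:static}. The remaining work is careful sign-tracking through the three identities. The striking structural point is the precise matching between the $\tfrac{2b}{r^3}$ on the right of \eqref{eq:4.4} and the $\tfrac{2b}{r^4}$ deviation of $\bar R$ from the AdS value $-6$: this is what makes the Maxwell contribution drop out cleanly and is the ultimate reason the AdS--Melvin space inherits the same monotonicity as the pure AdS soliton.
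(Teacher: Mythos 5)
Your proposal is correct and follows essentially the same route as the paper's proof: differentiate the two pieces of $Q$ using \eqref{equ:evoarea} and \eqref{equ:evoH}, transfer the Laplacian onto $r$ by integrating by parts, absorb the second-fundamental-form terms with the Gauss equation \eqref{eq:4.2}, trade $H\bar g(\bar\nabla r,\nu)$ for a second $-\Delta r$ via \eqref{eq:4.3}, and finish with the static inequality \eqref{eq:4.4} together with the identity $r\bar R+6r-2br^{-3}=0$ from \eqref{equ:R}. The one thing you leave hedged --- the $\pm rR$ --- is worth pinning down, and when one carries through the Gauss-equation substitution $H^2-|A|^2=R-\bar R+2\,\overline{\textup{Ric}}(\nu,\nu)$ carefully the sign is in fact $+rR$, not $-rR$ as printed in \eqref{equ:monomid}; the paper's own intermediate line has $+rR$ while its last line silently flips it, so this is a typographical slip in the stated lemma (and your observation that it is harmless is right, since the lemma is only used with $\rho=r^{-1}$ and Gauss--Bonnet kills $\int_{\Sigma_t}R\,dA_t$ on the torus).
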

\begin{proof}
We compute
\begin{align*}
\frac{d}{d t} Q(\Sigma_t) &=    \frac{d}{d t}\left(\int_{\Sigma_t} H r\, dA_t-6\int_{\Omega_t} r\, dV\right) \\
    &= \int_{\Sigma_t}\frac{\partial H}{\partial t} r+H\frac{\partial r}{\partial t}\, dA_t + \int_{\Sigma_t}Hr\, \frac{\partial }{\partial t}dA_t - 6\int_{\Sigma_t} \rho r\, dV .
\end{align*}
From \eqref{equ:evoH}, \eqref{equ:flow} and \eqref{equ:evoarea}, the above equals
\begin{align*}
& \int_{\Sigma_t} \left((-\Delta \rho - \rho\vert A\vert^2 -\rho \overline{\text{Ric}}(\nu,\nu))r + H\bar{g}( \bar\nabla r,\nu)\rho + Hr\cdot\rho H - 6\rho r\,\right) dA_t.
\end{align*}
Applying the Gauss equation \eqref{eq:4.2} and integration by parts, this is equal to
\[\int_{\Sigma_t} \rho\left(-\Delta r +   r\overline{\text{Ric}}(\nu,\nu)+ H\bar{g}( \bar\nabla r,\nu)  - 6  r +   r R -   r\bar R\right)\, dA_t.\]
Combining \eqref{eq:4.3} and \eqref{eq:4.4}, we have
\[H\bar{g}( \bar\nabla r,\nu) + \Delta r+    r\overline{\text{Ric}}(\nu,\nu)\leq \frac{2b }{r^3}.\]
Therefore,
\begin{align*}
    &\frac{d}{d t}Q(\Sigma_t) \leq   \int_{\Sigma_t} \rho\left( -2 \Delta r -  r R -   r \bar R - 6  r + \frac{2b }{r^3}\right)dA_t	.
\end{align*}
From \eqref{equ:R}, $ -  r\bar R - 6  r + \frac{2b }{r^3} = 0 $. Then \eqref{equ:monomid} follows.
\end{proof}
We are now ready to prove Proposition~\ref{pro:monotonicity}. 
\begin{proof}[Proof of Proposition~\ref{pro:monotonicity}]
In view of \eqref{equ:monomid}, it suffices to show that
\[\int_{\Sigma_t}   \frac{2\Delta r}{r} + R\, dA_t \geq 0.\]
Observe that
\[\text{div}\left(\frac{\nabla r}{r}\right) = \frac{\Delta r}{r} - \frac{\vert \nabla r\vert ^2}{r^2}.\]
By the divergence theorem, we have
\[\int_{\Sigma_t}\left(\frac{\Delta r}{r} - \frac{\vert \nabla r\vert ^2}{r^2}\right)dA_t = \int_{\Sigma_t}\text{div}\left(\frac{\nabla r}{r}\right)dA_t = 0.\]
Notice that $\Sigma_t$ is topologically a torus. The Gauss-Bonnet theorem implies that
\[\int_{\Sigma_t} R\,dA_t = 0.\]
We then conclude that
\[\int_{\Sigma_t}  \frac{2\Delta r}{r} + R\, dA_t = \int_{\Sigma_t} \frac{2\vert\nabla r\vert^2}{r^2}\,dA_t \geq 0.\]
Moreover, the equality holds if and only if $r$ is a constant on $\Sigma_t$. 
\end{proof}

\section{Asymptotics of the Flow}
Let $\Sigma$ be an embedded surface in $M$ which is a graph over $T^2$. Let $\varphi:[0,T_0)\times T^2\to M$ be a family of embeddings satisfying \eqref{equ:trueflow} with initial data $\Sigma$. Recall that the quantity $Q(\Sigma_t)$ is defined by \eqref{def:Q}. In this section, we investigate the limit of $Q(\Sigma_t)$ assuming $T_0=\infty$. The main result is the following.
\begin{prop} 
\label{theorem:theorem5.1}
Assume $T_0=\infty$. Then 
\[\lim_{t\to\infty}Q(\Sigma_t)  = P_xP_y(2r_s^3-2^{-1}).\]
\end{prop}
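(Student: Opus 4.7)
The starting point is the exact identity of Lemma~\ref{lemma:specialcase},
\begin{align*}
Q(\Sigma_t) - P_x P_y(2r_s^3 - 2^{-1}) = \int_0^{P_x}\int_0^{P_y}\bigl(Hs^4FN - 2s^3 + \tfrac12\bigr)\,dy\,dx,
\end{align*}
where $s=s(t,x,y)$ is the graph function of $\Sigma_t$. The plan is to show the integral on the right vanishes in the limit $t\to\infty$, by combining envelope-type arguments for the asymptotics of $s$ with the explicit algebraic structure of the integrand.

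The first step is to establish that $\min_{(x,y)} s(t,\cdot) \to \infty$ while the spatial oscillation $\max s - \min s$ remains uniformly bounded. Projecting $\partial_t\varphi = r^{-1}\nu$ onto the graph parametrization yields $\partial_t s = F^{1/2}(s)\,z$, with $z$ as in \eqref{def:z} satisfying $z\geq 1$ and equality precisely when $s_x = s_y = 0$. At a spatial extremum of $s(t,\cdot)$ the envelope theorem then gives
\begin{align*}
\tfrac{d}{dt}\min s = F^{1/2}(\min s), \qquad \tfrac{d}{dt}\max s = F^{1/2}(\max s),
\end{align*}
so ODE comparison with $F^{1/2}(r)\to 1$ forces $\min s \to \infty$. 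Using the expansion $F^{1/2}(r)-1=O(r^{-3})$, a Gronwall-type argument then keeps the oscillation uniformly bounded in $t$.

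The main obstacle is to upgrade these bounds to uniform pointwise control on $|\nabla s|$ and $|D^2 s|$. Because $\partial_t s = F^{1/2}(s)z$ is a first-order Hamilton--Jacobi-type equation, gradient control does not come from parabolic smoothing. My plan is to differentiate the evolution equation and exploit the fact that, for large $s$, the drift on $\nabla s$ decays like $s^{-4}$, which is integrable in $t$ since $s\sim t$. The standing assumption $T_0=\infty$ rules out characteristic crossings, and together these should yield $|\nabla s|, |D^2 s| \leq C$ uniformly on $[0,\infty)\times T^2$.

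With such bounds in hand, the conclusion follows from a direct algebraic computation. Substituting Lemma~\ref{lemma:H} into $Hs^4FN$ and using the identity $2s^3F + \tfrac12 s^4F' = 2s^3 - \tfrac12$ (immediate from $F=1-r^{-3}-br^{-4}$), one obtains
\begin{align*}
Hs^4FN - 2s^3 + \tfrac12 = z^{-2}\bigl[-s_{xx} - F^{-1}s_{yy} + \mathcal{E}\bigr] + (z^{-2}-1)(2s^3 - \tfrac12),
\end{align*}
where $\mathcal{E}$ is a sum of terms each carrying an explicit decay factor of $s^{-1}$, $s^{-4}F^{-2}$, $F^{-1}F'$, or $F^{-2}F'$. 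Integration by parts on $T^2$ converts $z^{-2}(-s_{xx} - F^{-1}s_{yy})$ into a purely lower-order expression in $\partial z^{-2}$ and $F^{-2}F'\,s_y^2$, with coefficients decaying as $s\to\infty$ once $|\nabla s|$ is controlled. From the $C^1$ bound one has $z^2-1 = O(s^{-4})$, so $(z^{-2}-1)(2s^3 - \tfrac12) = O(s^{-1})$, and every term in $\mathcal{E}$ is bounded by a decaying coefficient times a quantity bounded in $t$. The integrand therefore tends to zero uniformly on $T^2$ as $t\to\infty$, so the integral does as well, giving the claimed limit.
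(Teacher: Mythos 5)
Your overall strategy mirrors the paper's up through the $C^0$ and $C^1$ estimates: the $\min$/$\max$ comparison giving $s\sim t$ and the evolution inequality for $z^2$ giving $z^2-1=O(t^{-4})$ are essentially Lemmas~\ref{lem:C0} and~\ref{lem:C1}. The genuine gap is in the $C^2$ step. You assert that differentiating $\partial_t s = F^{1/2}(s)z$ together with the assumption $T_0=\infty$ ``should yield $|\nabla s|,|D^2 s|\leq C$ uniformly,'' but you give no derivation, and this is in fact strictly stronger than what the paper establishes. The normal flow with speed $r^{-1}$ carries a Riccati term $\rho\,h_i^kh_{kj}$ in the evolution of the second fundamental form, and the standing assumption $T_0=\infty$ only guarantees smoothness at each finite time, not a uniform bound as $t\to\infty$. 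The paper obtains only a \emph{one-sided} estimate $H-2\leq C(1+t)^{-3}(\log(1+t)+1)$ (Lemma~\ref{lem:C2}); no two-sided pointwise control on the Hessian is proved, and none is needed.

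What the paper does instead is an integral argument: since $\int_{\Sigma_t}\Delta r\,dA_t=0$ by the divergence theorem, it suffices to bound the \emph{negative} part $(\Delta r)_-$ pointwise, and from \eqref{equ:Lap_r} this is controlled precisely by the upper bound on $H-2$; the positive part then inherits the same integral bound for free. This gives $\int_{\Sigma_t}|\Delta r|\,dA_t\leq C(\log(1+t)+1)$ (Lemma~\ref{lemma:Lap_rabs}), which, multiplied against $F^{-1/2}z-1=O(t^{-3})$, vanishes in the limit. Your proposed route of proving a pointwise decay of the integrand and your integration by parts of $z^{-2}(-s_{xx}-F^{-1}s_{yy})$ do not close without the $C^2$ bound you assumed: $\partial_x(z^{-2})$ reintroduces $s_{xx}$ and $s_{xy}$ via $\partial_x z$, so the boundary-term-free IBP that works in the one-dimensional Lemmas~\ref{lemma:ysimp}--\ref{lemma:xsimp} (where one has the exact structure $\partial_x\arctan\lambda_x$) does not carry over to the two-dimensional case. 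To repair the argument you would either need to actually prove the uniform $C^2$ estimate (nontrivial, and not done in the paper), or replace the pointwise/IBP step with the divergence-theorem trick applied to $\Delta r$.
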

Combining Proposition~\ref{pro:monotonicity} and Proposition~\ref{theorem:theorem5.1}, we can prove Theorem~\ref{theorem:main}.
\begin{proof}[Proof of Theorem~\ref{theorem:main}]
Let $\varphi:[0,T_0)\times T^2\to M$ be a family of embeddings satisfying \eqref{equ:trueflow} with initial data $\Sigma$. By assumption, we can extend $T_0$ to infinity. Proposition~\ref{pro:monotonicity} implies that $Q(\Sigma)\geq Q(\Sigma_t)$ for all $t\geq 0$. Together with Proposition~\ref{theorem:theorem5.1}, we conclude that
\begin{align*}
Q(\Sigma)\geq \lim_{t\to\infty}Q(\Sigma_t)  = P_xP_y(2r_s^3-2^{-1}). 
\end{align*}
Furthermore, if equality is achieved, then $\frac{d}{dt}Q(\Sigma_t)=0$ for all $t\in [0,\infty)$. In view of Proposition~\ref{pro:monotonicity}, this implies $\Sigma_0=\Sigma$ is a coordinate torus.
\end{proof}
The rest of the section is devoted to proving Proposition~\ref{theorem:theorem5.1}. Throughout this section, we assume that $\varphi:[0,\infty)\times T^2\to M$ is a family of embeddings which solves \eqref{equ:trueflow}. We write $\Sigma_t$ for the image of $\varphi(t,\cdot)$ and denote by $s(t,x,y)$ the height function of $\Sigma_t$. That is, under the coordinates $(r,x,y)$, 
\begin{align*}
\Sigma_t=\{(s(t,x,y),x,y)\,|\, (x,y)\in T^2\}.
\end{align*}
In Sections~\ref{sec:5.1}, \ref{sec:5.2} and \ref{sec:5.3}, we establish the $C^0$, $C^1$ and $C^2$ estimates of the height function $s(t,x,y)$. These estimates are combined in Section~\ref{sec:5.4} to prove Proposition~\ref{theorem:theorem5.1}.

\subsection{$C^0$ estimates}\label{sec:5.1}\hfill \\

We start by calculating $\frac{\partial s}{\partial t}$.

\begin{lemma}
\label{lemma:lemma5.2}
Under the flow \eqref{equ:trueflow}, we have the equality
\begin{equation}\label{equ:evo_s}
\frac{\partial s}{\partial t} = sFN.
\end{equation} 
Here $F=F(s)$ and $N$ is defined in \eqref{equ:defN}.
\end{lemma}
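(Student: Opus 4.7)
The plan is to exploit the fact that evolving $\Sigma_t$ by a purely normal flow and evolving its graph parameterization both change the underlying surface by the same normal component: the difference is a tangential vector that reparametrizes but does not move the surface. So I will use the graph parameterization $\tilde\varphi(t,x,y) = (s(t,x,y),x,y)$ and compare the $\bar g$-inner product of $\partial_t \tilde\varphi$ with $\nu$ against the prescribed normal velocity $r^{-1}$.

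First I would write
\begin{align*}
\frac{\partial \tilde\varphi}{\partial t} = \frac{\partial s}{\partial t}\,\frac{\partial}{\partial r},
\end{align*}
and observe that, since $\tilde\varphi(t,\cdot)$ and $\varphi(t,\cdot)$ differ only by a tangential reparametrization of $\Sigma_t$, the normal components of their time derivatives agree:
\begin{align*}
\bar g\!\left(\frac{\partial \tilde\varphi}{\partial t},\nu\right) = \bar g\!\left(\frac{\partial \varphi}{\partial t},\nu\right) = \bar g(r^{-1}\nu,\nu) = r^{-1}.
\end{align*}

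Next I would compute $\bar g(\partial_r,\nu)$ directly from the explicit formula \eqref{equ:normal} for $\nu$. Since $\partial_r$ is orthogonal to both $\partial_x$ and $\partial_y$ in $\bar g$, only the $\partial_r$-component of $\nu$ contributes, and
\begin{align*}
\bar g(\partial_r,\nu) = N^{-1}\bar g(\partial_r,\partial_r) = N^{-1}r^{-2}F^{-1}.
\end{align*}
Combining these gives
\begin{align*}
\frac{\partial s}{\partial t}\cdot N^{-1}r^{-2}F^{-1} = r^{-1},
\end{align*}
so $\frac{\partial s}{\partial t} = r F N$. Evaluating on $\Sigma_t$ (where $r = s(t,x,y)$ and $F = F(s)$) yields the claimed identity $\frac{\partial s}{\partial t}=sFN$.

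There is no real obstacle here; the only subtle point is the tangential reparametrization, but it is harmless because it affects only the tangential part of $\partial_t\tilde\varphi$, which is annihilated by $\bar g(\cdot,\nu)$. The rest is a one-line use of the normal formula \eqref{equ:normal} together with the diagonal structure of $\bar g$ in the $(r,x,y)$ coordinates.
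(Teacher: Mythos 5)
Your proof is correct and follows essentially the same route as the paper: identify $\bar g(\partial_s/\partial t\,\partial_r,\nu)$ with the normal speed $r^{-1}$, compute $\bar g(\partial_r,\nu)=N^{-1}r^{-2}F^{-1}$ from \eqref{equ:normal} and the diagonal form of $\bar g$, and divide. The only difference is that you spell out the tangential-reparametrization point, which the paper leaves implicit; the substance is identical.
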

\begin{proof}
Notice that the normal projection of $\frac{\partial s}{\partial t}\frac{\partial}{\partial r}$ equals the velocity $r^{-1}\nu$. Therefore,
\begin{align*}
\bar{g}\left( \frac{\partial s}{\partial t}\frac{\partial}{\partial r},\nu \right)=\bar{g}\left( r^{-1}\nu,\nu \right)=r^{-1}. 
\end{align*}
From \eqref{equ:normal} and \eqref{metric}, it follows that
\[\frac{\partial s}{\partial t} = r^{-1} \bar{g}\left( \frac{\partial}{\partial r},\nu \right)^{-1} = rFN.\]
\end{proof}

Next, we show that $s$ is comparable to $t$ along the flow.
\begin{lemma} 
\label{lem:C0}
Under the flow \eqref{equ:trueflow}, there exists a constant $C_0>0$ such that for all $t\geq 0$ ,
\begin{equation}\max_{(x,y)\in T^2}\vert s(t,x,y) - t\vert \leq C_0.\end{equation}
\end{lemma}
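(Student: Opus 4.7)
\textit{Plan.} The strategy is to reduce the evolution of $s$ asymptotically to the ODE $y'=F(y)^{1/2}$, whose solutions satisfy $y(t)-t\to\mathrm{const}$. Combining Lemma~\ref{lemma:lemma5.2} with the identity $N = s^{-1}F^{-1/2} z$ coming from \eqref{equ:defN} and \eqref{def:z}, the height function satisfies
\begin{equation*}
\frac{\partial s}{\partial t} = F(s)^{1/2}\,z, \qquad z \geq 1,
\end{equation*}
with equality $z=1$ precisely when $\nabla s = 0$.

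Introduce an antiderivative $\lambda$ on $(r_s,\infty)$ by $\lambda'(r)=F(r)^{-1/2}$. Then
\begin{equation*}
\frac{\partial}{\partial t}\lambda(s(t,x,y)) = z(t,x,y) \geq 1.
\end{equation*}
Since $F(r)^{-1/2}-1 = O(r^{-3})$ is integrable near infinity, $\lambda(r)-r$ converges to a finite constant $L_\infty$ as $r\to\infty$, i.e.\ $\lambda(r)=r+L_\infty+o(1)$. It therefore suffices to prove a two-sided uniform bound on $w(t,x,y):=\lambda(s(t,x,y))-t$.

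The lower bound is immediate: integrating $w_t=z-1\geq 0$ in $t$ at each fixed $(x,y)$ gives $w(t,x,y)\geq w(0,x,y)\geq\min_{T^2}w(0,\cdot)$. For the upper bound, set $M(t):=\max_{T^2}w(t,\cdot)$; at any spatial maximizer $(x_0,y_0)$ one has $\nabla s(t,x_0,y_0)=0$, hence $z(t,x_0,y_0)=1$ and $w_t(t,x_0,y_0)=0$. By Danskin's envelope theorem the Lipschitz function $M(t)$ satisfies $M'(t)=0$ for a.e.\ $t$, hence is constant, so $w(t,x,y)\leq M(0)=\max_{T^2}w(0,\cdot)$.

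Combining the two bounds with the asymptotic $\lambda(r)=r+L_\infty+o(1)$ gives $|s(t,x,y)-t|\leq C_0$ for some uniform constant. The one delicate point is the envelope argument for $M(t)$; a reader wary of Danskin's theorem can instead argue by strict barrier comparison with solutions $y_\varepsilon$ of $y'=F(y)^{1/2}+\varepsilon$ (and symmetrically for the minimum with $-\varepsilon$): the inequality $s(t,x,y)<y_\varepsilon(t)$ cannot first fail at a point $(t_0,x_0,y_0)$, since such a point would be a spatial maximum of $s(t_0,\cdot)$, forcing $z=1$ and $\partial_t s=F(s)^{1/2}<y_\varepsilon'(t_0)$, a contradiction; passing $\varepsilon\to 0$ recovers the required bound.
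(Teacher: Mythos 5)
Your proof is correct and rests on the same core observation as the paper's: at a spatial extremum of $s$ one has $\nabla s = 0$, hence $z = 1$, so the height there obeys the scalar ODE $\partial_t s = F(s)^{1/2}$, and solutions of this ODE have $s(t)-t$ bounded because $1 - F(r)^{1/2} = O(r^{-3})$ is integrable. The implementation differs in a useful way. The paper works directly with $s_{\min}(t)$ and $s_{\max}(t)$: it first establishes linear-in-$t$ growth of the minimum from a uniform positive lower bound on $F(s_{\min})^{1/2}$, and then integrates $1 - F^{1/2}(s_{\min}(t)) \leq s_{\min}(t)^{-3}$ in time to bound $t - s_{\min}(t)$ (there are sign and exponent typos around $F^{\pm 1/2}$ in the printed proof, but that is clearly the intent). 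You instead introduce the antiderivative $\lambda$ with $\lambda'(r) = F(r)^{-1/2}$, which converts the flow to $\partial_t\lambda(s) = z \geq 1$ with equality exactly at critical points; then $w := \lambda(s)-t$ has $\min_{T^2} w$ nondecreasing and, by the envelope argument, $\max_{T^2} w$ constant, and boundedness of $\lambda(r)-r$ on $[s_{\min}(0),\infty)$ closes the argument. This is a cleaner one-shot version of the paper's two-stage estimate rather than a conceptually distinct route: the integrability of $F^{-1/2}-1$ near infinity, which makes $\lambda(r)-r$ converge, is precisely the same input the paper uses as $1-F^{1/2}\lesssim r^{-3}$, and both arguments need the Hamilton/Danskin envelope trick to differentiate the extremal functions. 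Your alternative barrier argument with $y_\varepsilon' = F(y_\varepsilon)^{1/2}+\varepsilon$ is also valid and is a standard way to make that envelope step rigorous.
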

\begin{proof}
Let $s_{\text{max}}(t)$ and $s_{\text{min}}(t)$ denote the maximum and minimum of $s(t,x,y)$ on the torus respectively. It suffices to show that $  s_{\text{max}}(t) - t $ and $  t - s_{\text{min}}(t) $ are bounded from above. We give a proof for the latter; the proof for the former is analogous.\\

At a minimum point of $s$, we have $N=s^{-1}F^{-1/2}$. Lemma \ref{lemma:lemma5.2} and \eqref{def:F} imply that
\[\frac{d s_{\min}}{d t} \geq  F^{-1/2}(s_{\min}).\]
Let $C^{-2}>0$ be the supremum of $F(r)$ for $r\in [r_s,\infty)$. The above implies that 
$$s_{\min}(t)\geq Ct+s_{\min}(0).$$ Notice that, from \eqref{def:F}, $F^{-1/2}(r)\geq 1-r^{3}$ for $r$ large enough. Therefore, there exists $t_0>0$ such that for $t\geq t_0$, we have
\begin{align*}
1-  \frac{d s_{\min}(t)}{d t}   \leq s_{\min}^{-3}(t)\leq \left( Ct+s_{\min}(0) \right)^{-3}.
\end{align*} 
Integrating the above inequality, for all $t\geq t_0$ we obtain, 
\begin{align*}
 (t - s_{\min}(t)) - (t_0 -s_{\min}(t_0))  \leq&  \int\limits_{t_0}^t \left( C\tau +s_{\min}(0) \right)^{-3}  d\tau \leq (2Cs_{\min}(0)^2)^{-1}.
\end{align*}
As a result, by setting  
\begin{align*}
C'_0=\max_{t\in [0,t_0]} (t-s_{\min}(t))+(2Cs_{\min}(0)^2)^{-1},
\end{align*}
we have $t-s_{\min}(t)\leq C_0'$ for all $t\geq 0$.

\end{proof}

\subsection{$C^1$ estimates}\label{sec:5.2} \hfill \\

Recall that $z$ is defined in \eqref{def:z} which measures the gradient of the height function. In this section, we prove that $|z^2-1|$ decays at a rate $t^{-4}$.
\begin{lemma}\label{lem:C1} 
Under the flow \eqref{equ:trueflow}, there exists a constant $C_1>0$ such that for all $t\geq 0$
\begin{equation}
\max_{(x,y)\in T^2} |z^2(t,x,y)-1|\leq C_1(t+1)^{-4}.
\end{equation}
\end{lemma}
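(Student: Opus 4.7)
The plan is to derive a pointwise differential inequality for the spatial maximum of $z^2-1$ via the maximum principle, and then integrate it using the $C^0$ estimate from Lemma~\ref{lem:C0}. The starting point is the height-function equation $\partial_t s = F^{1/2}(s)\,z$ of Lemma~\ref{lemma:lemma5.2}. Commuting partial derivatives gives
\[
\partial_t(\partial_x s) = \tfrac{F'}{2F^{1/2}}(\partial_x s)\,z + F^{1/2}\partial_x z,
\]
and similarly for $\partial_y s$. Differentiating
\[
z^2 = 1 + r^{-4}F^{-1}(\partial_x s)^2 + r^{-4}F^{-2}(\partial_y s)^2
\]
in $t$ and collecting terms, the $F'$ contributions coming from $\partial_t(r^{-4}F^{-1})$ cancel exactly against those produced by $2r^{-4}F^{-1}(\partial_x s)\partial_t(\partial_x s)$, while the analogous bookkeeping in $\partial_y s$ leaves a residual $-r^{-4}F^{-5/2}F'z(\partial_y s)^2$, which is $\le 0$ since $F'=3r^{-4}+4br^{-5}>0$.

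At a spatial maximum of $z^2(t,\cdot)$ on $T^2$, one has $\partial_x z = \partial_y z = 0$, so the transport-like terms vanish. Using the algebraic identity $(\partial_x s)^2 + F^{-1}(\partial_y s)^2 = r^4 F(z^2-1)$ that follows from the definition of $z$, the remaining expression telescopes to
\[
\partial_t z^2 \le -4 r^{-1} F^{1/2} z\,(z^2 - 1)
\]
at the maximum point. Setting $w(t) := \max_{T^2}(z^2(t,\cdot) - 1) \ge 0$, the envelope theorem implies $w$ is Lipschitz and, at a.e.\ $t$, satisfies $w'(t) \le -4 r^{-1} F^{1/2} z \cdot w(t)$, with $r, F, z$ evaluated at the (time-dependent) maximum point.

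For the integration, Lemma~\ref{lem:C0} gives $r = s \le t + C_0$, hence $r^{-1} \ge (t+C_0)^{-1}$, while $z \ge 1$ by definition and $F^{1/2}(r) \ge F(r) = 1 - r^{-3} - br^{-4} \ge 1 - Cr^{-3}$ for $r$ large. Since $r \ge t - C_0$, the error $\int_{t_1}^t 4C r^{-4}\,d\tau$ is uniformly bounded, so Gronwall's inequality yields
\[
\log w(t) - \log w(t_1) \le -4\log\frac{t+C_0}{t_1+C_0} + K,
\]
and exponentiating gives $w(t) \le C_1(t+1)^{-4}$ after absorbing the bounded initial regime into the constant.

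The main obstacle is the algebraic bookkeeping in the first step—verifying the exact cancellation of the $F'$ contributions and confirming the favorable sign of the residual $-r^{-4}F^{-5/2}F'z(\partial_y s)^2$ term. Once the clean inequality $\partial_t z^2\big|_{\max} \le -4r^{-1}F^{1/2}z(z^2-1)$ is in hand, the sharp decay rate $t^{-4}$ is essentially forced: the coefficient $4$ matches the power of $r$ appearing in the definition of $z^2-1$, and $r \sim t$ from Lemma~\ref{lem:C0} converts $r^{-1}$ into $t^{-1}$ without loss of exponent.
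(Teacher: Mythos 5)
Your proposal is correct and follows essentially the same route as the paper: derive the evolution equation for $z^2$ (the paper's Lemma on $\partial_t z^2$), evaluate at the spatial maximum to kill the transport terms and discard the non-positive $-r^{-4}zF^{-5/2}F'(\partial_y s)^2$ term, then integrate the resulting differential inequality $\frac{d}{dt}(z_{\max}^2-1) \le -4r^{-1}F^{1/2}z_{\max}(z_{\max}^2-1)$ using the $C^0$ estimate. The only cosmetic difference is in the last step: the paper replaces $r^{-1}F^{1/2}$ directly by $(t+2C_0)^{-1}$ via the bound $F^{1/2}(s)\ge s/(t+2C_0)$ for $t$ large, while you split off an integrable $O(r^{-4})$ error and absorb it via Gronwall; both yield the same $t^{-4}$ rate.
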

\begin{remark}
\label{remark:C1}
From \eqref{def:z} and Lemma~\ref{lem:C0}, Lemma~\ref{lem:C1} implies that there exists a constant $C_1'$ such that for all $t\in [0,\infty)$ and $(x,y)\in T^2$, we have
\begin{equation}\label{equ:C1}
\left| \frac{\partial s}{\partial x} \right|,\left| \frac{\partial s}{\partial y} \right|\leq C_1'.
\end{equation}  
\end{remark}

We begin by deriving the evolution equation of $z^2$. 
\begin{lemma}
Under the flow \eqref{equ:trueflow}, we have 
\begin{equation}\label{equ:evo_z}
\begin{split}
\frac{\partial z^2}{\partial t}=&-4r^{-1} zF^{1/2} (z^2-1)- r^{-4}z F^{-5/2}F'  \left(\frac{\partial s}{\partial y} \right)^2   \\
&  +2r^{-4} F^{-1/2}\frac{\partial s}{\partial x} \frac{\partial z}{\partial x} +2r^{-4} F^{-3/2}\frac{\partial s}{\partial y}\frac{\partial z}{\partial y}.
\end{split}
\end{equation}
\end{lemma}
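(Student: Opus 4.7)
The plan is to differentiate the explicit formula for $z^2$ given in \eqref{def:z} along the flow, using the fact that the flow determines $\partial s/\partial t$ in a simple way. The only ingredients needed are Lemma~\ref{lemma:lemma5.2}, the identity $N = r^{-1}F^{-1/2}z$ coming from comparing \eqref{equ:defN} with \eqref{def:z}, and the ordinary chain rule.

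First I would rewrite Lemma~\ref{lemma:lemma5.2} in the cleaner form
\begin{equation*}
\frac{\partial s}{\partial t} = sFN = F^{1/2}z,
\end{equation*}
and commute $\partial/\partial t$ with $\partial/\partial x$ and $\partial/\partial y$ to get
\begin{equation*}
\frac{\partial s_x}{\partial t} = \tfrac12 F^{-1/2}F's_x z + F^{1/2}\frac{\partial z}{\partial x}, \qquad \frac{\partial s_y}{\partial t} = \tfrac12 F^{-1/2}F's_y z + F^{1/2}\frac{\partial z}{\partial y},
\end{equation*}
where $s_x, s_y$ denote the spatial partials of $s(t,x,y)$ and $F = F(s)$, $F' = F'(s)$.

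Next, starting from $z^2 - 1 = r^{-4}F^{-1}s_x^{2} + r^{-4}F^{-2}s_y^{2}$ with $r=s$, I would apply $\partial/\partial t$ term by term. The derivative of the prefactor $r^{-4}F^{-\lambda}$ with $\lambda\in\{1,2\}$ produces a factor $-(4 + \lambda sF^{-1}F')s^{-1}\cdot F^{1/2}z$ times $r^{-4}F^{-\lambda}$; the derivative of $s_x^2$ and $s_y^2$ produces $2s_x s_{xt}$ and $2s_y s_{yt}$, into which I substitute the two displayed identities above. This gives
\begin{align*}
\frac{\partial z^2}{\partial t} =\, & -4r^{-5}F^{1/2}z\bigl(F^{-1}s_x^{2}+F^{-2}s_y^{2}\bigr) - r^{-4}F^{-3/2}F'z\,s_x^{2} - 2r^{-4}F^{-5/2}F'z\,s_y^{2} \\
& + r^{-4}F^{-3/2}F'z\,s_x^{2} + r^{-4}F^{-5/2}F'z\,s_y^{2} + 2r^{-4}F^{-1/2}s_x\frac{\partial z}{\partial x} + 2r^{-4}F^{-3/2}s_y\frac{\partial z}{\partial y}.
\end{align*}
The two $r^{-4}F^{-3/2}F'z\,s_x^{2}$ terms cancel exactly, and the $r^{-4}F^{-5/2}F'z\,s_y^{2}$ terms combine with total coefficient $-1$. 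Finally, I would use the defining relation $r^{-4}F^{-1}s_x^2 + r^{-4}F^{-2}s_y^2 = z^2-1$ to rewrite the first term as $-4r^{-1}zF^{1/2}(z^2-1)$, which yields exactly \eqref{equ:evo_z}.

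This is a pure bookkeeping computation, so there is no genuine obstacle. The only thing to watch is the chain rule on $F = F(s(t,x,y))$ when applying $\partial/\partial t$ and $\partial/\partial x$, $\partial/\partial y$, and making sure that the $F's_x^2$ cross-terms cancel while the $F's_y^2$ cross-terms partially cancel; this asymmetry between the $x$ and $y$ directions reflects the asymmetry in the original metric \eqref{metric} and is the reason the right-hand side of \eqref{equ:evo_z} contains only an $s_y$ term and not an $s_x$ term.
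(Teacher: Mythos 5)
Your proposal is correct and follows essentially the same route as the paper: differentiate the defining formula $z^2 - 1 = r^{-4}F^{-1}s_x^2 + r^{-4}F^{-2}s_y^2$ in $t$, substitute $\partial_t s = F^{1/2}z$ from Lemma~\ref{lemma:lemma5.2}, and regroup using \eqref{def:z}. The only difference from the paper is cosmetic bookkeeping: the paper expands the six product-rule terms and collects the contributions from $\partial_t(r^{-4})$, $\partial_t(F^{-\lambda})$, and $\partial_t(s_x^2), \partial_t(s_y^2)$ into three intermediate identities before substituting $\partial_t s = F^{1/2}z$, whereas you group by the two summands and substitute early, so that the exact cancellation of the $F' s_x^2$ terms and the partial cancellation of the $F' s_y^2$ terms appear explicitly in one display.
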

\begin{proof}
From \eqref{def:z}, we compute
\begin{align*}
\frac{\partial z^2}{\partial t} =\ &\frac{\partial r^{-4}}{\partial t} F^{-1}\left(\frac{\partial s}{\partial x} \right)^2+ r^{-4} \frac{\partial F^{-1}}{\partial t}\left(\frac{\partial s}{\partial x} \right)^2+2r^{-4} F^{-1}\frac{\partial s}{\partial x}\frac{\partial^2 s}{\partial x\partial t}\\
&+ \frac{\partial r^{-4}}{\partial t} F^{-2}\left(\frac{\partial s}{\partial y} \right)^2+ r^{-4} \frac{\partial F^{-2}}{\partial t}\left(\frac{\partial s}{\partial y} \right)^2+2r^{-4} F^{-2}\frac{\partial s}{\partial y}\frac{\partial^2 s}{\partial y\partial t}.
\end{align*}
From \eqref{def:z}, we have 
\begin{align}\label{equ:z1}
\frac{\partial r^{-4}}{\partial t} F^{-1}\left(\frac{\partial s}{\partial x} \right)^2+\frac{\partial r^{-4}}{\partial t} F^{-2}\left(\frac{\partial s}{\partial y} \right)^2=-\frac{4}{r} \frac{\partial s}{\partial t}(z^2-1)
\end{align}
and 
\begin{align}\label{equ:z2}
r^{-4} \frac{\partial F^{-1}}{\partial t}\left(\frac{\partial s}{\partial x} \right)^2+r^{-4} \frac{\partial F^{-2}}{\partial t}\left(\frac{\partial s}{\partial y} \right)^2=-F^{-1}F'\frac{\partial s}{\partial t}\left( z^2-1+r^{-4}F^{-2}\left(\frac{\partial s}{\partial y} \right)^2 \right).
\end{align}
Using \eqref{equ:z1} and \eqref{equ:z2}, we derive
\begin{align*}
\frac{\partial z^2}{\partial t}=&\left( -\frac{4}{r} \frac{\partial s}{\partial t} -F^{-1}F'\frac{\partial s}{\partial t} \right)(z^2-1) - r^{-4} F^{-3}F'\frac{\partial s}{\partial t}   \left(\frac{\partial s}{\partial y} \right)^2   \\
& +2r^{-4} F^{-1}\frac{\partial s}{\partial x}\frac{\partial^2 s}{\partial x\partial t}+2r^{-4} F^{-2}\frac{\partial s}{\partial y}\frac{\partial^2 s}{\partial y\partial t}.
\end{align*}
Next, we compute the second order derivatives of $s$. Note that we can rewrite \eqref{equ:evo_s} in terms of $z$ as
\begin{align*}
\frac{\partial s}{\partial t}= F^{1/2}z.
\end{align*}
Taking a derivative along the $x$-direction, we have
\begin{equation*}
\frac{\partial^2 s}{\partial x \partial t}=\frac{1}{2}F^{-1/2}F' z \frac{\partial s}{\partial x} + F^{1/2}\frac{\partial z}{\partial x} .
\end{equation*}
Similarly,
\begin{equation*}
\frac{\partial^2 s}{\partial y \partial t}=\frac{1}{2}F^{-1/2}F' z \frac{\partial s}{\partial y} + F^{1/2}\frac{\partial z}{\partial y} .
\end{equation*}
Therefore,
\begin{equation}\label{equ:z3}
\begin{split}
 & 2r^{-4} F^{-1}\frac{\partial s}{\partial x}\frac{\partial^2 s}{\partial x\partial t}+2r^{-4} F^{-2}\frac{\partial s}{\partial y}\frac{\partial^2 s}{\partial y\partial t}\\
 =\ &zF^{-1/2}F' (z^2-1)+2r^{-4} F^{-1/2}\frac{\partial s}{\partial x} \frac{\partial z}{\partial x} +2r^{-4} F^{-3/2}\frac{\partial s}{\partial y}\frac{\partial z}{\partial y}.
\end{split}
\end{equation}
In conclusion,
\begin{equation}
\begin{split}
\frac{\partial z^2}{\partial t}=&\left( -\frac{4}{r} \frac{\partial s}{\partial t} -F^{-1}F'\frac{\partial s}{\partial t}+zF^{-1/2}F' \right)(z^2-1)   \\
&- r^{-4} F^{-3}F'\frac{\partial s}{\partial t}   \left(\frac{\partial s}{\partial y} \right)^2  +2r^{-4} F^{-1/2}\frac{\partial s}{\partial x} \frac{\partial z}{\partial x} +2r^{-4} F^{-3/2}\frac{\partial s}{\partial y}\frac{\partial z}{\partial y}.
\end{split}
\end{equation}
Then \eqref{equ:evo_z} follows by replacing $\frac{\partial s}{\partial t}$ by $zF^{1/2}$.
\end{proof}

\begin{cor}

Let $z_{\max}(t)$ be the maximum of $z(t,x,y)$ at time $t$ over the torus. Then $z_{\max}(t)$ satisfies
\begin{align}\label{equ:evo_zmax}
\frac{d }{d t}(z_{\max}^2(t)-1)\leq -4r^{-1} z_{\max}(t)F^{1/2} (z_{\max}^2(t)-1). 
\end{align}
\end{cor}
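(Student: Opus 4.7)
The plan is to apply the evolution equation \eqref{equ:evo_z} at a spatial maximum of $z(t,\cdot,\cdot)$ and discard the nonpositive terms. Since the torus is compact and $z$ is smooth, for each $t$ the maximum $z_{\max}(t)$ is attained at some point $(x_t,y_t)\in T^2$.

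First, I would observe that $F'(r) = 3r^{-4} + 4br^{-5} > 0$ throughout the flow region, which immediately tells us that the term
\[
-r^{-4}zF^{-5/2}F'\left(\frac{\partial s}{\partial y}\right)^2
\]
appearing on the right-hand side of \eqref{equ:evo_z} is nonpositive (note $z\geq 1 > 0$). Next, at the maximum point $(x_t,y_t)$, the spatial derivatives satisfy $\frac{\partial z}{\partial x}(t,x_t,y_t) = \frac{\partial z}{\partial y}(t,x_t,y_t)=0$, which kills the last two terms in \eqref{equ:evo_z}. Therefore at such a point,
\[
\frac{\partial z^2}{\partial t}(t,x_t,y_t)\leq -4r^{-1}z_{\max}(t)F^{1/2}\bigl(z_{\max}^2(t)-1\bigr).
\]

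The second step is to pass from a pointwise bound on $\partial_t z^2$ at the maximizer to a bound on the (upper Dini) derivative of the envelope $z_{\max}^2(t)$. This is a standard envelope/Hamilton-type argument: for any $h>0$ small,
\[
z_{\max}^2(t+h)-z_{\max}^2(t) \leq z^2(t+h,x_t,y_t)-z^2(t,x_t,y_t),
\]
so dividing by $h$ and letting $h\to 0^+$ gives $\frac{d^+}{dt}z_{\max}^2(t)\leq \partial_t z^2(t,x_t,y_t)$. Combined with the previous display, this yields \eqref{equ:evo_zmax}. If $z_{\max}^2$ is known to be absolutely continuous (e.g.\ Lipschitz, which follows from the smoothness of $z$ and compactness of $T^2$), this upper Dini bound is equivalent to the stated differential inequality in the almost-everywhere sense, and one then integrates to obtain the corresponding decay estimate.

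There is no real obstacle here; the main point is just the sign analysis $F'>0$ (so one discarded term is favorable), together with the vanishing of the spatial gradient terms at a maximum. The only subtlety is making the envelope argument rigorous, but this is routine.
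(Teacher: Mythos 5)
Your proposal is correct and takes essentially the same approach as the paper: at the spatial maximizer the gradient terms in \eqref{equ:evo_z} vanish, the term with $F'>0$ is discarded as nonpositive (using $z\geq 1$), and the standard envelope argument promotes the pointwise bound to a differential inequality for $z_{\max}^2(t)$. The paper leaves the envelope step implicit, but your reasoning matches it exactly.
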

\begin{proof} 
Note that at the maximum point of $z$, $\frac{\partial z}{\partial x}=\frac{\partial z}{\partial y}=0$. Together with $F'\geq 0$, \eqref{equ:evo_zmax} follows from \eqref{equ:evo_z}. 
\end{proof}
\begin{proof}[Proof of Lemma~\ref{lem:C1}]
By Lemma \ref{lem:C0} there exists a constant $C_0>0$ satisfying 
\[\vert s(t,x,y)-t\vert < C_0.\] 
We compute
\[\lim_{t\to\infty}t\left(F^{1/2}(s(t,x,y) )-1\right) = 0 > -C_0 = \lim_{t\to\infty}t\left(\frac{t+C_0}{t+2C_0}-1\right).\]
Thus there exists a constant $t_0$ such that for all $t\geq t_0$, 
\[F^{1/2}(s(t,x,y))\geq \frac{t+C_0}{t+2C_0} \geq \frac{s(t,x,y)}{t+2C_0}.\]
It follows that
\[\frac{d }{d t}(z_{\max}^2-1) \leq -4r^{-1}z_{\max}F^{1/2}(z_{\max}^2-1)\leq -4r^{-1}F^{1/2}(z_{\max}^2-1) \leq -\frac{4}{t+2C_0}(z_{\max}^2-1)\]
for all $t\geq t_0$. Integrating the above, it follows that there is a constant $C_1>0$ such that
\[z_{\max}^2(t)-1 \leq C_1(t+1)^{-4}.\]
\end{proof}
\subsection{$C^2$ estimates}\label{sec:5.3} \hfill \\

The goal of this section is to show that the positive part of $H-2$ decays at a rate $t^{-3}\log t$. 
\begin{lemma}\label{lem:C2}
Under the flow \eqref{equ:trueflow}, there exists a constant $C_2>0$ such that 
\begin{equation}\label{equ:asympH}
H-2\leq C_2(1+t)^{-3}\big(\log(1+t)+1 \big).
\end{equation}
\end{lemma}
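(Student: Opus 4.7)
The plan is to derive the pointwise ODE inequality $\partial_t u \le -3u/r + Cr^{-4}$ for $u := H - 2$ along the flow, and then close by comparison against the supersolution $v(t) = K(1+t)^{-3}\log(1+t) + L(1+t)^{-3}$. The logarithmic factor is forced by a resonance phenomenon: with damping coefficient exactly $3$ and source of order $(1+t)^{-4}$, the integrating factor solves the model ODE $w' + 3w/(1+t) = K(1+t)^{-4}$ with $w = K(1+t)^{-3}\log(1+t)$. I would begin by substituting $\rho = r^{-1}$ into \eqref{equ:evoH}, expanding $\Delta(r^{-1}) = -r^{-2}\Delta r + 2r^{-3}|\nabla r|^2$, and using the \emph{equality} form of the identity from the proof of Lemma~\ref{lem:static}:
\[
\bar{\nabla}_a\bar{\nabla}_b r - \bar{\Delta} r\cdot\bar{g}_{ab} - r\cdot\bar{R}_{ab} = -\tfrac{2b}{r^3}\bigl(r^{-2}F^{-1}\bar{\nabla}_a r\bar{\nabla}_b r + r^2 F\bar{\nabla}_a y\bar{\nabla}_b y\bigr).
\]
Contracting with $\nu^a\nu^b$ and combining with \eqref{eq:4.3} replaces $\overline{\textup{Ric}}(\nu,\nu)$ by an expression linear in $\Delta r$ and $H\bar{g}(\bar{\nabla}r,\nu)$, so that the evolution equation rewrites as
\[
\frac{\partial H}{\partial t} = \frac{2\Delta r}{r^2} - \frac{2|\nabla r|^2}{r^3} + \frac{H\,\bar{g}(\bar{\nabla}r,\nu)}{r^2} - \frac{2bP}{r^5} - \frac{|A|^2}{r},
\]
where $P \ge 0$ is uniformly bounded.

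Next, I would expand each term asymptotically using the $C^0$ and $C^1$ estimates. By Lemma~\ref{lem:C0} one has $r \sim t$, so $F = 1 + O(r^{-3})$ and $F' = O(r^{-4})$; by Lemma~\ref{lem:C1} and Remark~\ref{remark:C1}, $z^2 - 1 = O(t^{-4})$ and the spatial derivatives of $s$ are uniformly bounded. Feeding these into \eqref{equ:Lap_r} and \eqref{equ:gradr} gives $\Delta r = -ru + \tfrac{1}{2}(1+u)r^{-2} + O(r^{-3})$, $\bar{g}(\bar{\nabla}r,\nu) = rF^{1/2}z^{-1} = r - \tfrac{1}{2}r^{-2} + O(r^{-3})$, and $|\nabla r|^2 = O(r^{-2})$. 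Combined with the pointwise bound $|A|^2 \ge H^2/2 = 2 + 2u + u^2/2$ (which is $(\kappa_1-\kappa_2)^2 \ge 0$), the $O(r^{-1})$ contributions combine as
\[
-\tfrac{2u}{r} + \tfrac{2+u}{r} - \tfrac{2 + 2u}{r} = -\tfrac{3u}{r},
\]
with all remaining terms bounded by $Cr^{-4}$, where $C$ depends on an a priori $L^\infty$-bound $M$ on $|u|$ obtained from short-time smooth existence.

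To close, I would verify that $v(t) = K(1+t)^{-3}\log(1+t) + L(1+t)^{-3}$ satisfies $v'(t) + 3v(t)/(1+t) = K(1+t)^{-4}$. Since Lemma~\ref{lem:C0} sandwiches $r \in [t-C_0, t+C_0]$, choosing $K$ sufficiently large (in terms of $C$ and the remainders) and $L$ large enough so that $v(0) \ge \max_{(x,y)} u(0,x,y)$, one obtains $v'(t) \ge -3v(t)/r + C/r^4$ pointwise. A Hamilton-type maximum principle argument then applies: at a hypothetical first time $t^*$ at which $\max_{(x,y)}(u - v)(t^*, \cdot, \cdot)$ reaches $0$ at some $(x^*, y^*)$, the envelope theorem gives $\partial_t u(t^*, x^*, y^*) \ge v'(t^*)$, while the pointwise inequality combined with $u(t^*, x^*, y^*) = v(t^*) > 0$ forces $\partial_t u(t^*, x^*, y^*) \le -3v/r + C/r^4 \le v'(t^*)$, a contradiction. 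Hence $u \le v \le C_2(1+t)^{-3}(\log(1+t)+1)$.

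The main obstacle is the delicate algebraic bookkeeping in the second step: the constant pieces of $2\Delta r/r^2$, $H\bar{g}(\bar{\nabla}r,\nu)/r^2$, and $-|A|^2/r$ must cancel exactly (to eliminate a destabilizing $O(1/r)$ source), and the coefficients of $u/r$ must sum to precisely $-3$ for the resonance mechanism to produce the $\log$ factor rather than either a clean power or no decay. Any miscount in this triple cancellation would destroy the target estimate, so the identity is intrinsic to the choice of flow speed $r^{-1}$.
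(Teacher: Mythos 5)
Your overall strategy is the same as the paper's: derive a pointwise inequality $\partial_t u \leq -3u/r+(\text{remainder})$ for $u=H-2$ by substituting $\rho=r^{-1}$ into \eqref{equ:evoH}, expand using the $C^0$ and $C^1$ estimates, and integrate. You route the Ricci term through the \emph{exact} static identity from Lemma~\ref{lem:static}, whereas the paper (in Lemmas~\ref{lem:T}--\ref{lemma:Tbound}) keeps $\overline{\textup{Ric}}(\nu,\nu)$ explicit and uses only the crude asymptotic $\overline{\textup{Ric}}(\nu,\nu)=-2+O((1+t)^{-3})$ from Lemma~\ref{lemma:ricci}; and you close with a supersolution comparison against $v(t)=K(1+t)^{-3}\log(1+t)+L(1+t)^{-3}$, whereas the paper closes by integrating $U=(1+t)^3\max\{T,0\}$. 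Your cancellation check $-2u/r+(2+u)/r-(2+2u)/r=-3u/r$ is correct; the constant $O(r^{-1})$ pieces also cancel, as you say.

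The genuine gap is the sentence that the remaining terms are bounded by $Cr^{-4}$ \emph{``where $C$ depends on an a priori $L^\infty$-bound $M$ on $|u|$ obtained from short-time smooth existence.''} Short-time existence only controls $\sup|u|$ on a compact time interval $[0,T_1]$, not uniformly for $t\in[0,\infty)$, and a global $\sup|u|$ is precisely what Lemma~\ref{lem:C2} is being used to establish --- the argument as written is circular. The fix is straightforward and is in fact what the paper does implicitly: the $u$-dependent pieces of the remainder (of size $O(r^{-4})u$ and smaller) should \emph{not} be dumped into the source; keep them as a perturbation of the linear coefficient, writing $\partial_t u\leq\bigl(-3/r+\mathcal E(t)\bigr)u+b(t)$ with $|\mathcal E(t)|\leq C(1+t)^{-4}$ and $b(t)$ a bound independent of $u$. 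Since $\int_0^\infty|\mathcal E|\,dt<\infty$, the comparison (or the paper's $U$-substitution, which produces the integrable factor $e^{C/(1+t)}$) closes without any circular hypothesis.

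One further remark: the claim that the logarithm is ``forced by a resonance'' is a slight overstatement. If you actually carry out your static-identity computation to the end with the sharp expansion $\overline{\textup{Ric}}(\nu,\nu)=-2+\tfrac12 r^{-3}+O(r^{-4})$ (equivalently contracting the equality in the proof of Lemma~\ref{lem:static}), the $r^{-4}$ coefficients in the source cancel identically, leaving $b(t)=O(r^{-5})$, which would yield $u\leq C(1+t)^{-3}$ with no log. The paper's log arises because it replaces the sharp Ricci expansion by the one-sided bound $|\overline{\textup{Ric}}(\nu,\nu)+2|\leq Cr^{-3}$, which inflates the source to $O(r^{-4})$. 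This does not invalidate the stated lemma (which only asserts an upper bound), but the log is an artifact of a convenient estimate, not a genuine resonance.
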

Let $T_{ij}=h_{ij}-g_{ij}$ and $T=g^{ij}T_{ij}=H-2$. We derive the evolution equation of $T$ in the following.
\begin{lemma}\label{lem:T}
Under the flow \eqref{equ:trueflow},
\begin{equation}\label{equ:T}
\begin{split}
\frac{\partial T}{\partial t}=&\left(-r^{-1}F^{1/2}z^{-1}-2r^{-1} \right)T-r^{-1}|T_{ij}|^2\\
&+ 2r^{-1}F+2^{-1}F'-2r^{-1}-2r^{-1}F^{1/2}z^{-1}-r^{-1}\overline{\textup{Ric}}(\nu,\nu) \\
&-(2r^{-1}   F -  2^{-1}   F')(1-z^{-2})-2^{-1}r^{-4}F^{-2}F' z^{-2}\left( \frac{\partial s}{\partial y} \right)^2.   
\end{split}
\end{equation}
\end{lemma}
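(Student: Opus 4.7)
The plan is to derive \eqref{equ:T} from the general evolution equation \eqref{equ:evoH} with $\rho=r^{-1}$, combined with the Laplacian formula \eqref{equ:Lap_r} and a decomposition of the squared second fundamental form.

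First I would observe that since $T = H-2$ differs from $H$ by a constant, $\frac{\partial T}{\partial t}=\frac{\partial H}{\partial t}$. Applying \eqref{equ:evoH} with $\rho=r^{-1}$ gives
\begin{equation*}
\frac{\partial T}{\partial t} \;=\; -\Delta(r^{-1}) \;-\; r^{-1}\,\overline{\textup{Ric}}(\nu,\nu) \;-\; r^{-1}|A|^2 .
\end{equation*}
This isolates the three kinds of contributions that need to be expanded.

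Next I would compute $\Delta(r^{-1}) = -r^{-2}\Delta r + 2r^{-3}|\nabla r|^2$ by substituting \eqref{equ:Lap_r} for $\Delta r$ and \eqref{equ:gradr} for $|\nabla r|^2$. Rearranging yields
\begin{align*}
-\Delta(r^{-1}) \;=\;& -r^{-1}F^{1/2}z^{-1}H + 2r^{-1}F + 2^{-1}F' \\
&- (2r^{-1}F - 2^{-1}F')(1-z^{-2}) - 2^{-1}r^{-4}F^{-2}F'z^{-2}\Bigl(\tfrac{\partial s}{\partial y}\Bigr)^{2}.
\end{align*}
Writing $H = T+2$ in the first term produces the coefficient $-r^{-1}F^{1/2}z^{-1}$ of $T$ together with the stray constant $-2r^{-1}F^{1/2}z^{-1}$ that appears in \eqref{equ:T}.

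Then I would handle $|A|^2$. Since $h_{ij} = T_{ij} + g_{ij}$, a direct expansion using $g^{ij}g_{ij}=2$ and $g^{ij}T_{ij}=T$ gives
\begin{equation*}
|A|^{2} \;=\; |T_{ij}|^{2} + 2T + 2 ,
\end{equation*}
so $-r^{-1}|A|^{2}$ contributes $-r^{-1}|T_{ij}|^{2}$, the remaining $-2r^{-1}T$ contribution to the coefficient of $T$, and the constant $-2r^{-1}$. Summing the three pieces and collecting terms proportional to $T$, to $|T_{ij}|^{2}$, and the remainder recovers exactly \eqref{equ:T}.

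The computation is essentially bookkeeping: the only real content is that $\rho=r^{-1}$ is chosen precisely so that the Laplacian formula \eqref{equ:Lap_r} produces the $-r^{-1}F^{1/2}z^{-1}H$ contribution needed to form the linear coefficient of $T$. The main nuisance is signs and factors of $F$ and $z$ in the $(1-z^{-2})$ and $\bigl(\tfrac{\partial s}{\partial y}\bigr)^{2}$ terms, so I would verify these cancellations carefully before asserting the final form.
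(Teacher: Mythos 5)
Your proposal is correct and follows exactly the paper's own route: substitute $\rho=r^{-1}$ into \eqref{equ:evoH}, expand $\Delta(r^{-1})=-r^{-2}\Delta r+2r^{-3}|\nabla r|^2$ using \eqref{equ:Lap_r} and \eqref{equ:gradr}, then replace $H=T+2$ and $|A|^2=2+2T+|T_{ij}|^2$ and collect terms. No difference from the paper beyond presentation.
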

\begin{proof}
The evolution equation of $H$ is given by
\begin{align*}
\frac{\partial H}{\partial t}=&\Delta (-r^{-1}) -r^{-1}|A|^2-r^{-1}\overline{\textup{Ric}}(\nu,\nu)\\
=&r^{-2}\Delta r-2r^{-3}|\nabla r|^2-r^{-1}|A|^2-r^{-1}\overline{\textup{Ric}}(\nu,\nu).
\end{align*}
Using \eqref{equ:Lap_r} and \eqref{equ:gradr}, the above equals
\begin{align*}
&r^{-2}\times\bigg( -rF^{1/2}z^{-1}H+  2rF+2^{-1}r^2F'  +  2^{-1}r^2  F'(1-z^{-2})-2^{-1}r^{-2}F^{-2}F' z^{-2}\left( \frac{\partial s}{\partial y} \right)^2   \bigg)\\
&-2r^{-3}\times \bigg( r^2 F(1-z^{-2}) \bigg)-r^{-1}|A|^2-r^{-1}\overline{\textup{Ric}}(\nu,\nu)\\
=&-r^{-1}F^{1/2}z^{-1}H+ 2r^{-1}F+2^{-1} F' -r^{-1}|A|^2-r^{-1}\overline{\textup{Ric}}(\nu,\nu)\\
&+ 2^{-1}   F'(1-z^{-2})-2^{-1}r^{-4}F^{-2}F' z^{-2}\left( \frac{\partial s}{\partial y} \right)^2 -2r^{-1}   F(1-z^{-2}). 
\end{align*}
Since $H=T+2$ and $|A|^2=2+2T+ |T_{ij}|^2,$
\begin{align*}
\frac{\partial T}{\partial t}=&\left(-r^{-1}F^{1/2}z^{-1}-2r^{-1} \right)T-r^{-1}|T_{ij}|^2\\
&+ 2r^{-1}F+2^{-1}F'-2r^{-1}-2r^{-1}F^{1/2}z^{-1}-r^{-1}\overline{\textup{Ric}}(\nu,\nu) \\
&+  2^{-1}   F'(1-z^{-2})-2^{-1}r^{-4}F^{-2}F' z^{-2}\left( \frac{\partial s}{\partial y} \right)^2 -2r^{-1}   F(1-z^{-2}) .
\end{align*}
Then \eqref{equ:T} follows by rearranging terms.
\end{proof}

\begin{lemma}
\label{lemma:Tbound}
Under the flow \eqref{equ:trueflow}, there exists a constant $C_3>0$ such that 
\begin{equation}
\frac{\partial T}{\partial t} - \left(-r^{-1}F^{1/2}z^{-1}-2r^{-1} \right)T + r^{-1}\vert T_{ij}\vert^2 \leq C_3(1+t)^{-4}.
\end{equation}
\end{lemma}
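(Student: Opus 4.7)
The plan is to apply Lemma~\ref{lem:T} directly: subtracting $(-r^{-1}F^{1/2}z^{-1}-2r^{-1})T$ and adding $r^{-1}|T_{ij}|^2$ to both sides of \eqref{equ:T} reduces the desired inequality to showing that the ``source'' expression
\begin{equation*}
E := 2r^{-1}F+\tfrac12 F'-2r^{-1}-2r^{-1}F^{1/2}z^{-1}-r^{-1}\overline{\textup{Ric}}(\nu,\nu) - (2r^{-1}F - \tfrac12 F')(1-z^{-2}) - \tfrac12 r^{-4}F^{-2}F' z^{-2}\left(\tfrac{\partial s}{\partial y}\right)^2
\end{equation*}
satisfies $E \leq C_3(1+t)^{-4}$.

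The tools available are the following: Lemma~\ref{lem:C0} gives $r \geq t - C_0$, so $r^{-k} \lesssim (1+t)^{-k}$; the definition \eqref{def:F} yields $F-1 = O(r^{-3})$ and $F' = O(r^{-4})$; Lemma~\ref{lem:C1} gives $|z^2-1| = O((1+t)^{-4})$, from which $|z^{-1}-1|$ and $|1-z^{-2}|$ are of the same order; and Remark~\ref{remark:C1} provides uniform bounds on $\partial_x s$ and $\partial_y s$. With these, the last two terms of $E$ are immediately $O((1+t)^{-5})$ and $O((1+t)^{-8})$ respectively, while a short computation shows that the combination $2r^{-1}(F-1) + \tfrac12 F'$ simplifies to exactly $-\tfrac12 r^{-4} + O(r^{-5})$.

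The delicate step is the group $-r^{-1}\bigl(2F^{1/2}z^{-1} + \overline{\textup{Ric}}(\nu,\nu)\bigr)$. I would compute the ambient Ricci tensor explicitly from the identity derived in the proof of Lemma~\ref{lem:static}, namely
\begin{equation*}
\bar R_{ab} = -(2F+rF')\bar g_{ab} + r^{-1}\bigl(\tfrac12 F' + 2br^{-5}\bigr)\bigl(F^{-1}\bar\nabla_a r\,\bar\nabla_b r + r^4 F\,\bar\nabla_a y\,\bar\nabla_b y\bigr),
\end{equation*}
and contract it with $\nu$ using $\bar g(\bar\nabla r,\nu) = rF^{1/2}z^{-1}$ and the explicit form \eqref{equ:normal}. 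Since $\nu(y) = O(r^{-3})$, this yields $\overline{\textup{Ric}}(\nu,\nu) = -2 + \tfrac12 r^{-3} + O(r^{-4})$. A Taylor expansion of $F^{1/2}$ together with $z^{-1} = 1 + O((1+t)^{-4})$ gives $2F^{1/2}z^{-1} = 2 - r^{-3} + O(r^{-4}) + O((1+t)^{-4})$. Adding the two expansions, the leading $\pm 2$ cancel and the $r^{-3}$ contributions combine to $-\tfrac12 r^{-3}$; multiplication by $-r^{-1}$ then produces $+\tfrac12 r^{-4} + O((1+t)^{-5})$. This cancels the $-\tfrac12 r^{-4}$ isolated in the previous paragraph, leaving $E = O((1+t)^{-5}) \leq C_3(1+t)^{-4}$.

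The main obstacle is the pair of successive cancellations required to reach the $(1+t)^{-4}$ rate: the zeroth-order cancellation coming from the asymptotically AdS structure ($\overline{\textup{Ric}}(\nu,\nu) \to -2$ and $F^{1/2}z^{-1} \to 1$), and a subtler cancellation at order $r^{-3}$ between the expansion of $F$, the $z$-correction from the $C^1$ estimate, and the AdS-Melvin-specific term $2br^{-5}$ in the Ricci tensor. Once the relevant expansions are tracked with the necessary precision, the remaining bookkeeping is routine.
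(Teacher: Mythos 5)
Your proof is correct and follows essentially the same strategy as the paper: start from the exact identity of Lemma~\ref{lem:T} and show the source term is $O((1+t)^{-4})$ using the $C^0$ and $C^1$ estimates. The one notable difference is that you track an extra cancellation at order $r^{-4}$ (between the exact identity $2r^{-1}(F-1) + \tfrac12 F' = -\tfrac12 r^{-4}$ and the refined expansion $\overline{\mathrm{Ric}}(\nu,\nu) = -2 + \tfrac12 r^{-3} + O(r^{-4})$ from the explicit Ricci tensor), obtaining the sharper bound $O((1+t)^{-5})$; the paper avoids this extra bookkeeping by invoking only the coarser Lemma~\ref{lemma:ricci}, $\overline{\mathrm{Ric}}(\nu,\nu) = -2 + O(r^{-3})$, under which each group of terms is already $O((1+t)^{-4})$ with no cancellation needed.
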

\begin{proof}
By Lemma \ref{lem:T}, we have
\begin{equation}
\label{C4}
\begin{split}
    &\frac{\partial T}{\partial t} - \left(-r^{-1}F^{1/2}z^{-1}-2r^{-1} \right)T + r^{-1}\vert T_{ij}\vert^2 \\
    =&\ 2r^{-1}F+2^{-1}F'-2r^{-1}-2r^{-1}F^{1/2}z^{-1}-r^{-1}\overline{\textup{Ric}}(\nu,\nu) \\
-& (2r^{-1}   F -  2^{-1}   F')(1-z^{-2})-2^{-1}r^{-4}F^2F' z^{-2}\left( \frac{\partial s}{\partial y} \right)^2. 
\end{split}
\end{equation}
Lemma \ref{lem:C0} and Lemma \ref{lem:C1} imply the following asymptotics:
\begin{equation}\label{equ:asympFzr}
\begin{split}
    r^{-1}  = (1+t)^{-1}+O((1+t)^{-2}),\ F = 1 + O((1+t)^{-3}),\\
     F'=O((1+t)^{-4}),\ z  = 1 + O((1+t)^{-4}).
\end{split}
\end{equation}
From Remark \ref{remark:C1} and Lemma~\ref{lemma:ricci}, we have
\begin{align}\label{equ:asymp_ps}
 \frac{\partial s}{\partial y}=O(1),\  \overline{\textup{Ric}}(\nu,\nu)=-2+O((1+t)^{-3}). 
\end{align}
Then we derive
\begin{equation}\label{equ:asymp}
\begin{split}
    2r^{-1}F - 2r^{-1}F^{1/2}z^{-1} &= O((1+t)^{-4}),\\
    -2r^{-1}F^{1/2}z^{-1}-r^{-1}\overline{\textup{Ric}}(\nu,\nu)&= O((1+t)^{-4}),\\
    (2r^{-1}   F -  2^{-1}   F')(1-z^{-2}) &= O((1+t)^{-5}), \\
    2^{-1}r^{-4}F^2F' z^{-2}\left( \frac{\partial s}{\partial y}\right)^2 &= O((1+t)^{-8}).
\end{split}
\end{equation}
Plugging \eqref{equ:asymp} into \eqref{C4} yields the result.
\end{proof}
We are now ready to prove Lemma~\ref{lem:C2}.
\begin{proof}[Proof of Lemma~\ref{lem:C2}]

By Lemma \ref{lemma:Tbound}, we have
\[\frac{\partial T}{\partial t}\leq \left(-r^{-1}F^{1/2}z^{-1}-2r^{-1} \right)T - r^{-1}\vert T_{ij}\vert^2 + C_3(1+t)^{-4}.\]
By \eqref{equ:asympFzr}, it follows that
\[\frac{\partial T}{\partial t} \leq -\left(3(1+t)^{-1} + O((1+t)^{-2})\right)T + C_3(1+t)^{-4}.\]
Define $U=(1+t)^3\max\{T,0\}$. The above implies there exists a constant $C>0$ such that
\[\frac{\partial U}{\partial t}\leq C(1+t)^{-2}U + C_4(1+t)^{-1}.\]
We may rewrite this as
\[ \frac{\partial}{\partial t} \left( U e^{C/(1+t)} \right)\leq C_3(1+t)^{-1}e^{C/(1+t)}\leq C'(1+t)^{-1}.\]
Here $C'=C_3e^C$. Let $U_{\max}(t)$ be the maximum of $U$ at time $t$. By integrating the above inequality, we have
$$U_{\max}(t) e^{C/(1+t)}\leq U_{\max}(0) e^{C }+C'\log(1+t).$$
This implies
\begin{align*}
H-2\leq e^{-C/(1+t)}\left( C'(1+t)^{-3}\log(1+t)+e^{C }(1+t)^{-3}U_{\max}(0)  \right).
\end{align*}
Then \eqref{equ:asympH} follows by setting $C_2=C'+e^C U_{\max}(0)$.
\end{proof}

\subsection{Proof of Proposition~\ref{theorem:theorem5.1}}\label{sec:5.4}
In this section we prove Proposition~\ref{theorem:theorem5.1}. Recall that $\varphi:[0,\infty)\times T^2\to M$ is a family of embeddings satisfying \eqref{equ:trueflow} and that $\Sigma_t$ is the image of $\varphi(t,\cdot)$.
\begin{lemma}
\label{lemma:Lap_rabs}
Under the flow \eqref{equ:trueflow}, there exists a constant $C_4>0$ such that 
\[\int_{\Sigma_t} \vert\Delta r\vert\,  {dA_t} \leq C_4\big(\log(1+t)+1\big).\]
\end{lemma}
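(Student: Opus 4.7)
The plan is to use that $\Sigma_t$ is a closed torus, so $\int_{\Sigma_t} \Delta r\, dA_t = 0$ by the divergence theorem, and hence
\[
\int_{\Sigma_t} |\Delta r|\, dA_t = 2\int_{\Sigma_t} (\Delta r)^+\, dA_t.
\]
The main task then reduces to bounding $\int_{\Sigma_t}(\Delta r)^+\, dA_t$, which I would do by decomposing \eqref{equ:Lap_r} as $\Delta r = -rF^{1/2}z^{-1}(H-2) + E$, where
\[
E = -2rF^{1/2}z^{-1} + 2rF + \tfrac{1}{2}r^2 F' + \tfrac{1}{2}r^2 F'(1-z^{-2}) - \tfrac{1}{2}r^{-2}F^{-2}F' z^{-2}\bigl(\tfrac{\partial s}{\partial y}\bigr)^2.
\]
The point is that the leading $-rF^{1/2}z^{-1}\cdot 2$ and $+2rF$ almost cancel. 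Using the $C^0$ and $C^1$ asymptotics $F = 1+O(r^{-3})$, $F' = O(r^{-4})$, $z^{-1} = 1+O(r^{-4})$ (from Lemmas~\ref{lem:C0} and \ref{lem:C1}) together with $\partial s/\partial y = O(1)$ (Remark~\ref{remark:C1}), a Taylor expansion gives $2r(F-F^{1/2}z^{-1}) + \tfrac{1}{2}r^2 F' = O(r^{-2})$, so $|E| \leq C r^{-2}$ pointwise. Since $dA_t = r^2 F^{1/2}z\, dxdy$ and $r\sim t$, we have $|\Sigma_t| = O((1+t)^2)$ and therefore $\int_{\Sigma_t} |E|\, dA_t = O(1)$.

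With this decomposition, write $T = H-2 = T^+ - T^-$. Pointwise, $(\Delta r)^+ \leq rF^{1/2}z^{-1} T^- + |E|$. The $C^2$ estimate (Lemma~\ref{lem:C2}) controls the positive part,
\[
\int_{\Sigma_t} rF^{1/2}z^{-1}T^+\, dA_t \leq C\cdot t\cdot (1+t)^{-3}\bigl(\log(1+t)+1\bigr)\cdot (1+t)^2 = O\bigl(\log(1+t)+1\bigr).
\]
To get the same bound for the negative part I would use the divergence theorem identity $\int_{\Sigma_t} \Delta r\, dA_t = 0$, which rewrites as
\[
\int_{\Sigma_t} rF^{1/2}z^{-1} T\, dA_t = \int_{\Sigma_t} E\, dA_t = O(1),
\]
so $\int rF^{1/2}z^{-1}T^-\, dA_t = \int rF^{1/2}z^{-1}T^+\, dA_t - \int rF^{1/2}z^{-1}T\, dA_t = O(\log(1+t)+1)$. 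Combining the pieces gives the claimed bound.

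The main obstacle is that a naive term-by-term estimate of \eqref{equ:Lap_r} gives only $|\Delta r| \leq C r\cdot H$, which integrated over $\Sigma_t$ would produce a bound of order $t^3$, far worse than the desired logarithmic growth. One must identify the cancellation $-rF^{1/2}z^{-1}\cdot H + 2rF \sim -r(H-2)$ to expose the factor $T = H-2$ that the $C^2$ estimate controls. The subtlety is then that Lemma~\ref{lem:C2} controls only $T^+$, not $|T|$; the role of the global identity $\int \Delta r\, dA_t = 0$ is precisely to trade $T^-$ for $T^+$ modulo an $O(1)$ error, which is what makes the argument close.
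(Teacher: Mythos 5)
Your argument is correct and follows essentially the same route as the paper: use $\int_{\Sigma_t}\Delta r\, dA_t = 0$ to reduce to a one-sided bound, expose $T = H-2$ through the cancellation in \eqref{equ:Lap_r}, and invoke Lemma~\ref{lem:C2} together with the $C^0$, $C^1$ asymptotics. The paper is marginally more direct: since Lemma~\ref{lem:C2} controls $T^+=\max(H-2,0)$, it bounds the negative part $(\Delta r)_- \leq rF^{1/2}z^{-1}T^+ + |E|$ pointwise and integrates, which avoids the extra step in your version of swapping $T^-$ for $T^+$ via the integral identity $\int_{\Sigma_t} rF^{1/2}z^{-1}T\, dA_t = \int_{\Sigma_t} E\, dA_t$.
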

\begin{proof}
Let $(\Delta r)_+ = \frac 12 (\vert \Delta r\vert + \Delta r)$ and $(\Delta r)_- = \frac 12 (\vert \Delta r\vert - \Delta r)$ be the positive and negative parts of $\Delta r$ respectively. By the divergence theorem,
\[0 = \int_{\Sigma_t} \Delta r\, {dA_t} = \int_{\Sigma_t} \left((\Delta r)_+ - (\Delta r)_-\right) {dA_t}.\]
It thus suffices to show that
\[\int_{\Sigma_t} (\Delta r)_-\,  {dA_t}\leq  C \big(\log (1+t)+1\big) .\]
From \eqref{equ:Lap_r}, \eqref{equ:asympFzr}, \eqref{equ:asymp_ps} and Lemma \ref{lem:C2}, we have 
\[(\Delta r)_- \leq C(1+t)^{-2}\big(\log (1+t)+1\big).\]
Similarly, from \eqref{volforms} and \eqref{equ:asympFzr}, we have
\begin{align*}
 {dA_t}\leq C(1+t)^2   dy  dx.
\end{align*}
Combining the two yields the result.
\end{proof}

\begin{lemma}
Under the flow \eqref{equ:trueflow}, we have
\label{lemma:Lap_rlimit}
\[\lim_{t\to\infty}\left(\int_{\Sigma_t} F^{-1/2}z\Delta r\,  {dA_t}\right) = 0.\]
\end{lemma}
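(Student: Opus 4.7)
The plan is to exploit the divergence theorem identity $\int_{\Sigma_t}\Delta r\,dA_t=0$ (used already in the proof of Lemma~\ref{lemma:Lap_rabs}) to subtract off the leading part of $F^{-1/2}z$ and turn the integrand into an expression whose size is genuinely small, so that the $L^1$ bound on $|\Delta r|$ provided by Lemma~\ref{lemma:Lap_rabs} is good enough to close the argument.

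Concretely, I would begin by writing
\[
\int_{\Sigma_t} F^{-1/2}z\,\Delta r\,dA_t
\;=\;\int_{\Sigma_t}\bigl(F^{-1/2}z-1\bigr)\Delta r\,dA_t,
\]
using $\int_{\Sigma_t}\Delta r\,dA_t=0$, and then estimate the prefactor $F^{-1/2}z-1$ in sup-norm. Decompose
\[
F^{-1/2}z-1 \;=\; \bigl(F^{-1/2}-1\bigr)\,z \;+\;(z-1).
\]
From the explicit formula $F=1-r^{-3}-br^{-4}$ together with the $C^{0}$ bound of Lemma~\ref{lem:C0} (so $r=s(t,x,y)\sim 1+t$), a Taylor expansion yields $F^{-1/2}-1=O((1+t)^{-3})$ uniformly on $\Sigma_t$. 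The $C^{1}$ bound of Lemma~\ref{lem:C1} gives $|z^2-1|\le C_1(1+t)^{-4}$, and since $z\ge 1$ on $\Sigma_t$ one has $|z-1|=|z^2-1|/(z+1)=O((1+t)^{-4})$. Combining,
\[
\sup_{\Sigma_t}\bigl|F^{-1/2}z-1\bigr|\;\le\;C\,(1+t)^{-3}.
\]

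The conclusion then follows by applying the $L^1$ estimate from Lemma~\ref{lemma:Lap_rabs}:
\[
\left|\int_{\Sigma_t} F^{-1/2}z\,\Delta r\,dA_t\right|
\;\le\;\sup_{\Sigma_t}\bigl|F^{-1/2}z-1\bigr|\cdot\int_{\Sigma_t}|\Delta r|\,dA_t
\;\le\;C\,(1+t)^{-3}\bigl(\log(1+t)+1\bigr),
\]
which tends to zero as $t\to\infty$.

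The only step that requires care is verifying that $F^{-1/2}z-1=O((1+t)^{-3})$; once this pointwise bound is in hand, the already established logarithmic growth bound for $\int_{\Sigma_t}|\Delta r|\,dA_t$ beats it comfortably, so there is no genuine obstacle. The crux is recognizing that subtracting the $\Delta r$ integral (which vanishes identically) converts an integrand that would otherwise only decay like $t^{-3}\log t$ pointwise against an area form growing like $t^{2}\,dxdy$ into one where the prefactor $F^{-1/2}z-1$ is already small uniformly.
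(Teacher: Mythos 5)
Your proposal is correct and follows essentially the same route as the paper: subtract the vanishing $\int_{\Sigma_t}\Delta r\,dA_t$, show $\sup_{\Sigma_t}|F^{-1/2}z-1|=O((1+t)^{-3})$ via the $C^0$ and $C^1$ estimates, and invoke the $L^1$ bound on $|\Delta r|$ from Lemma~\ref{lemma:Lap_rabs}. Your explicit decomposition $F^{-1/2}z-1=(F^{-1/2}-1)z+(z-1)$ just spells out the asymptotics that the paper cites as \eqref{equ:asympFzr}.
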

\begin{proof}
By the divergence theorem,
\[\int_{\Sigma_t} F^{-1/2}z\Delta r\,  {dA_t} = \int_{\Sigma_t} \left(F^{-1/2}z-1\right)\Delta r\,  {dA_t}.\]
Using \eqref{equ:asympFzr}, we have
\[\int_{\Sigma_t} \left\vert\left(F^{-1/2}z-1\right)\Delta r \right\vert {dA_t} \leq  C   (1+t)^{-3} \int_{\Sigma_t} \left\vert\Delta r \right\vert {dA_t}.\]
The result then follows from Lemma \ref{lemma:Lap_rabs}.
\end{proof}

\begin{lemma}
Under the flow \eqref{equ:trueflow}, we have
\label{lemma:intHlimit}
\[\lim_{t\to\infty}\left(\int_{\Sigma_t} \left(Hr - 2r - 2^{-1}r^{-2}\right)  {dA_t}\right) = 0.\]
\end{lemma}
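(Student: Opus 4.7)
The plan is to solve for $Hr$ in the pointwise identity \eqref{equ:Lap_r}, integrate, and show all remaining terms are either handled by Lemma~\ref{lemma:Lap_rlimit} or decay fast enough in light of the $C^0$, $C^1$, $C^2$ estimates from Sections~\ref{sec:5.1}--\ref{sec:5.3}. More precisely, multiplying \eqref{equ:Lap_r} by $F^{-1/2}z$ and rearranging gives
\begin{equation*}
rH = -F^{-1/2}z\,\Delta r + 2rF^{1/2}z + \tfrac12 r^2 F^{1/2}z\,F' + F^{-1/2}z\,E,
\end{equation*}
where $E=\tfrac12 r^2F'(1-z^{-2})-\tfrac12 r^{-2}F^{-2}F'z^{-2}\left(\partial s/\partial y\right)^2$ collects the lower order terms in \eqref{equ:Lap_r}.

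Next I would insert the expansions \eqref{equ:asympFzr} together with $F=1-r^{-3}-br^{-4}$ and $F'=3r^{-4}+4br^{-5}$. A direct Taylor computation gives $F^{1/2}z=1-\tfrac12 r^{-3}+O((1+t)^{-4})$, so
\begin{equation*}
2rF^{1/2}z = 2r - r^{-2} + O((1+t)^{-3}),\qquad \tfrac12 r^2 F^{1/2}zF' = \tfrac32 r^{-2} + O((1+t)^{-3}),
\end{equation*}
and for the remainder $E$ one checks $E=O((1+t)^{-6})$ using Remark~\ref{remark:C1} and Lemma~\ref{lem:C1}. The crucial arithmetic is that $-r^{-2}+\tfrac32 r^{-2}-\tfrac12 r^{-2}=0$, so the three constants $2r$, $\tfrac12 r^{-2}$ on the left align exactly with the leading behavior on the right:
\begin{equation*}
Hr - 2r - \tfrac12 r^{-2} = -F^{-1/2}z\,\Delta r + O((1+t)^{-3}).
\end{equation*}

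Finally I would integrate over $\Sigma_t$. From \eqref{volforms} and \eqref{equ:asympFzr} one has $dA_t\leq C(1+t)^2\,dx\,dy$, so the $O((1+t)^{-3})$ pointwise error contributes $O((1+t)^{-1})$ to the integral, which tends to $0$. The remaining integral $\int_{\Sigma_t} F^{-1/2}z\,\Delta r\,dA_t$ tends to $0$ by Lemma~\ref{lemma:Lap_rlimit}. Combining yields the claim.

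The only place requiring real care is verifying that every term one discards really is $O((1+t)^{-3})$ or better after being multiplied by $dA_t$. This is routine given the sharp estimates already established, but it is where the choice of the combination $Hr-2r-\tfrac12 r^{-2}$ becomes essential: it is precisely the unique linear combination matching $-F^{-1/2}z\Delta r$ to order $(1+t)^{-2}$, so no bookkeeping miracle is required beyond the expansion of $F^{1/2}z$ to two nontrivial orders. I do not foresee a serious obstacle; the main subtlety is keeping track of the factor $F^{-1/2}z\sim 1$ when comparing $\Delta r$ to its $F^{-1/2}z$-weighted version, which is exactly the gap absorbed in Lemma~\ref{lemma:Lap_rlimit} via the bound from Lemma~\ref{lemma:Lap_rabs}.
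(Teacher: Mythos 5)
Your proposal is correct and follows the paper's proof essentially verbatim: you solve for $rH$ by multiplying \eqref{equ:Lap_r} by $F^{-1/2}z$, use the asymptotics \eqref{equ:asympFzr} together with Lemma~\ref{lem:C1} and Remark~\ref{remark:C1} to show all terms except $-F^{-1/2}z\,\Delta r$ match $2r+\tfrac12 r^{-2}$ to $o((1+t)^{-2})$, and then dispose of the $\Delta r$ term via Lemma~\ref{lemma:Lap_rlimit}. (One small algebraic slip: after multiplying by $F^{-1/2}z$ the third term should read $\tfrac12 r^2 F^{-1/2}z F'$, not $\tfrac12 r^2 F^{1/2}z F'$; this does not affect the conclusion since the discrepancy is $O(r^{-5})$.)
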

\begin{proof}
Let us rewrite \eqref{equ:Lap_r} in the form
\begin{equation*}
\begin{split}
r(H-2)=-&F^{-1/2}z\Delta r+  2r(F^{-1/2}z - 1) + 2^{-1}r^2 F^{-1/2}F'z \\
&+ 2^{-1}r^2 F^{-1/2} F'z(1-z^{-2})-2^{-1}r^{-2}F^{-5/2}F' z^{-1}\left( \frac{\partial s}{\partial y} \right)^2  .
\end{split}
\end{equation*}
By Lemma \ref{lem:C1} and Remark \ref{remark:C1}, we have
\begin{equation*}
\begin{split}
    2^{-1}r^2  F^{-1/2}F'z(1-z^{-2}) &= O((1+t)^{-6}), \\
    2^{-1}r^{-2}F^{-5/2}F' z^{-1}\left( \frac{\partial s}{\partial y} \right)^2 &= O((1+t)^{-6}).
\end{split}
\end{equation*}
Therefore
\[r(H-2)=-F^{1/2}z\Delta r+  2r(F^{-1/2}z - 1) + 2^{-1}r^2 F^{-1/2}F'z + O((1+t)^{-6}).\]
Note that 
\begin{align*}
    2r(F^{1/2}z - 1) &= -r^{-2} + o((1+t)^{-2}), \\
    2^{-1}r^2F^{-1/2}F'z &= \frac 32 r^{-2} + o((1+t)^{-2}).
\end{align*}
Thus
\[Hr - 2r - 2^{-1}r^{-2} = -F^{-1/2}z\Delta r + o((1+t)^{-2}).\]
Upon integration, the desired limit follows by using Lemmas \ref{lem:C0} and \ref{lemma:Lap_rlimit}.
\end{proof}
We are now ready to prove Proposition~\ref{theorem:theorem5.1}.
\begin{proof}[Proof of Proposition~\ref{theorem:theorem5.1}]
Recall that
\begin{align*}
Q(\Sigma):=\int_{\Sigma} H r\, dA-6\int_{\Omega} r\, dV.
\end{align*} 
By Lemma \ref{lemma:intHlimit}, it suffices to show that 
\[\lim_{t\to\infty}\left(\int_{\Sigma_t} \left(2r + 2^{-1}r^{-2}\right)\,  {dA_t}-6\int_{\Omega_t} r\,  {dV}\right) = P_xP_y(2r_s^3-2^{-1}).\]
Using \eqref{volforms}, we may rewrite
\begin{align*}
&\int_{\Sigma_t} \left(2r + 2^{-1}r^{-2}\right)\,  {dA_t}-6\int_{\Omega_t} r\,  {dV}-P_xP_y(2r_s^3-2^{-1})\\
 =& \int_0^{P_x}\int_0^{P_y}\left(2r^3F^{1/2}z - 2r^3 + 2^{-1}F^{1/2}z + 2^{-1}\right)  dy  dx.
\end{align*}
From \eqref{def:F}, Lemma~\ref{lem:C0} and Lemma~\ref{lem:C1}, we have
\[2r^3F^{1/2}z - 2r^3 = -1 + o(1), \ 2^{-1}F^{1/2}z + 2^{-1} = 1 + o(1).\]
Upon adding the two and integrating, we obtain the desired result.
\end{proof}

\begin{appendix}
\section{Curvature of the AdS-Melvin space}\label{sec:A}
In this section, we compute the Riemannian curvature tensor of AdS-Melvin space whose metric is given by \eqref{metric}.
\begin{lemma}\label{lem:R}
The Ricci curvature of $\bar{g}$ is given by
\begin{equation}\label{equ:Ric}
\begin{split}
 \overline{R}_{ab}=&\left(-2r^{-2}-\frac{5}{2}r^{-1} F^{-1}  F'-\frac{1}{2} F^{-1}F'' \right)\bar{\nabla}_ar\bar{\nabla}_br  + (-2r^2F-r^3F')\bar{\nabla}_a x\bar{\nabla}_b x  \\
&+\left( -2r^2F-\frac{5}{2}r^3 FF'-\frac{1}{2}r^4 FF'' \right) \bar{\nabla}_ay\bar{\nabla}_by.
\end{split}
\end{equation}
Also, the scalar curvature is given by 
\begin{align}\label{equ:R}
    \bar R &= -6+\frac{2b}{r^4}. 
\end{align}
\end{lemma}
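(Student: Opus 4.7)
I would compute the curvature via Cartan's moving-frame formalism. Introduce the $\bar g$-orthonormal coframe
\[
e^1 = r^{-1}F^{-1/2}\, dr,\qquad e^2 = r\, dx,\qquad e^3 = rF^{1/2}\, dy,
\]
which is adapted to the diagonal warped-product structure of \eqref{metric}. Since $F$ depends only on $r$, the exterior derivatives reduce to $de^1 = 0$, $de^2 = F^{1/2}\, e^1\wedge e^2$, and $de^3 = \frac{(r^2F)'}{2rF^{1/2}}\, e^1\wedge e^3$. The first structure equation $de^i + \omega^i{}_j\wedge e^j = 0$ together with the antisymmetry $\omega^i{}_j + \omega^j{}_i = 0$ then determines the connection $1$-forms uniquely:
\[
\omega^2{}_1 = F^{1/2}\, e^2, \qquad \omega^3{}_1 = \frac{(r^2F)'}{2rF^{1/2}}\, e^3, \qquad \omega^3{}_2 = 0.
\]

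\textbf{Key steps.} Substituting into the second structure equation $\Omega^i{}_j = d\omega^i{}_j + \omega^i{}_k\wedge\omega^k{}_j$ reduces to scalar derivatives of functions of $r$ and yields the three sectional curvatures
\[
K(e_1,e_2) = K(e_2,e_3) = -\frac{(r^2F)'}{2r},\qquad K(e_1,e_3) = -\frac{(r^2F)''}{2}.
\]
The Ricci tensor is diagonal in this frame with entries $R_{ii} = \sum_{j\neq i}K(e_i,e_j)$. Translating back to the coordinate basis via $\bar R_{ab} = \sum_i R_{ii}\,(e^i)_a(e^i)_b$, using $(e^1)_a(e^1)_b = r^{-2}F^{-1}\bar\nabla_a r\bar\nabla_b r$, $(e^2)_a(e^2)_b = r^2\bar\nabla_a x\bar\nabla_b x$, $(e^3)_a(e^3)_b = r^2F\bar\nabla_a y\bar\nabla_b y$, and expanding $(r^2F)' = 2rF + r^2F'$ and $(r^2F)'' = 2F + 4rF' + r^2F''$ reproduces the Ricci expression \eqref{equ:Ric}.

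\textbf{Scalar curvature.} Tracing gives
\[
\bar R = 2\bigl(K(e_1,e_2) + K(e_1,e_3) + K(e_2,e_3)\bigr) = -\frac{2(r^2F)'}{r} - (r^2F)'' = -6F - 6rF' - r^2F''.
\]
Now substitute $F(r) = 1 - r^{-3} - br^{-4}$, so $F'(r) = 3r^{-4} + 4br^{-5}$ and $F''(r) = -12r^{-5} - 20br^{-6}$. The coefficients of $r^{-3}$ cancel ($6 - 18 + 12 = 0$) while those of $br^{-4}$ collapse to $6 - 24 + 20 = 2$, producing $\bar R = -6 + 2b/r^4$, as in \eqref{equ:R}.

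\textbf{Main obstacle.} The computation is entirely mechanical; the only delicate points are the sign and normalization bookkeeping in Cartan's second structure equation and in the orthonormal-to-coordinate Ricci conversion. A useful sanity check is the limit $F \equiv 1$, under which \eqref{metric} becomes hyperbolic $3$-space in horospherical coordinates: each sectional curvature becomes $-1$ and one recovers $\bar R_{ab} = -2\bar g_{ab}$ and $\bar R = -6$, as required.
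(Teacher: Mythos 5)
Your proposal is correct and reaches the same sectional curvatures as the paper, but by a genuinely different computational route. The paper proceeds via the Gauss and Codazzi equations applied to the totally geodesic coordinate slices $\{y=\textup{const}\}$, $\{x=\textup{const}\}$ and the umbilic coordinate torus $\{r=\textup{const}\}$, reading off $\overline{R}_{rxrx}$, $\overline{R}_{ryry}$, $\overline{R}_{yxyx}$ in turn and then contracting; you instead run Cartan's moving-frame calculus with the orthonormal coframe $e^1=r^{-1}F^{-1/2}dr$, $e^2=r\,dx$, $e^3=rF^{1/2}dy$. Both are mechanical and both exploit that the metric is diagonal with coefficients depending only on $r$: your $\Omega^2{}_1 = \tfrac{(r^2F)'}{2r}\,e^1\wedge e^2$, $\Omega^3{}_1=\tfrac{(r^2F)''}{2}\,e^1\wedge e^3$, $\Omega^3{}_2=\tfrac{(r^2F)'}{2r}\,e^2\wedge e^3$ reproduce exactly the three coordinate-slice Gauss curvatures the paper extracts from the slices. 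The paper's slice argument is perhaps cleaner to state without fixing sign conventions for $\Omega^i{}_j$; the frame computation is more uniform and would extend without change to metrics where none of the slices happen to be totally geodesic. Both scale correctly in the hyperbolic limit $F\equiv 1$, and both give $\bar R = -6F-6rF'-r^2F''=-6+2br^{-4}$.

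One bookkeeping item worth noting: when you translate $R_{33}=K(e_1,e_3)+K(e_2,e_3)$ back to coordinates via $(e^3)_a(e^3)_b=r^2F\,\bar\nabla_a y\,\bar\nabla_b y$, the $\bar\nabla_a y\,\bar\nabla_b y$ coefficient you obtain is $-2r^2F^2-\tfrac{5}{2}r^3FF'-\tfrac{1}{2}r^4FF''$, not the $-2r^2F-\tfrac{5}{2}r^3FF'-\tfrac{1}{2}r^4FF''$ printed in \eqref{equ:Ric}. Your value is the correct one, consistent with $\overline{R}_{yy}$ being $-2F$ times $\bar g_{yy}=r^2F$ to leading order (so that $\overline{R}_{ab}\to -2\bar g_{ab}$ as $F\to 1$); the displayed formula \eqref{equ:Ric} appears to have dropped a power of $F$ in this term, and the paper's own appendix computation of $\overline{R}_{yy}$ also carries the $F^2$. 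The scalar-curvature identity \eqref{equ:R} is unaffected.
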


\begin{proof}
We begin by computing the Riemannian curvature tensor of $\bar{g}$ through the Gauss-Codazzi equations. Consider the surface $S_1:=\{y=\textup{constant}\}$. Because $S_1$ is perpendicular to the Killing vector $\frac{\partial}{\partial y}$, it is totally geodesic and has vanishing second fundamental form. By the Gauss equation, the sectional curvature of the plane generated by $\frac{\partial}{\partial r}$ and $\frac{\partial}{\partial x}$ equals the Gauss curvature of $S_1$, which is $-F-2^{-1}r F'$. Therefore, $\overline{R}_{rxrx}=-1-2^{-1}r F^{-1}F'.$ Moreover, by the Codazzi equation, we have $\overline{R}_{yxrx}=\overline{R}_{yrxr}=0.$

Applying the same argument to the surface $S_2=\{x=\textup{constant}\}$, we have $\overline{R}_{ryry}=-F-2 r F'-2^{-1}r^2F''$ and $\overline{R}_{xyry}=0$.

Consider the surface $S_3=\{r=\textup{constant}\}$. From Lemma~\ref{lem:christoffel}, we have that the second fundamental forms of $S_3$ is given by $h_{xx}=r^{2}F^{1/2}$, $h_{yy}=r^2F^{3/2}+2^{-1}r^3F^{1/2}F'$ and $h_{xy}=0$. Notice that the induced metric of $S_3$ is flat. From the Gauss equation, we have $\overline{R}_{yxyx}=-r^4F^2-2^{-1}r^5FF'$. In short, we have
\begin{align*}
\overline{R}_{rxrx}=-1-2^{-1}r F^{-1}F',\ \overline{R}_{ryry}=-F-2 r F'-2^{-1}r^2F'',\\
 \overline{R}_{yxyx}=-r^4F^2-2^{-1}r^5FF',\ \overline{R}_{yxrx}=\overline{R}_{yrxr}=\overline{R}_{xyry}=0.
\end{align*}
Together with \eqref{metric}, we derive
\begin{align*}
\overline{R}_{rr}=&-2r^{-2}-\frac{5}{2}r^{-1}F^{-1}F'-\frac{1}{2} F^{-1}F'',\\
\overline{R}_{xx}=&-2r^{2}F-r^3F',\\
\overline{R}_{yy}=&-2r^{2}F^2-\frac{5}{2}  r^3F'-\frac{1}{2}r^4FF'' ,
\end{align*}
and $\overline{R}_{rx}=\overline{R}_{ry}=\overline{R}_{xy}=0$. This proves \eqref{equ:Ric}. Moreover, we have $R=-6F-6rF'-r^2F''.$ Then \eqref{equ:R} follows by plugging $F=1-r^{-3}-br^{-4}$.
\end{proof}
\begin{lemma}
\label{lemma:ricci}
There exists a constant $C>0$ such that for any unit vector $\nu$ we have
\begin{equation}
\vert\overline{\textup{Ric}}(\nu,\nu)+2\vert \leq Cr^{-3}.
\end{equation}
\end{lemma}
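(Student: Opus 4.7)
The plan is to diagonalize the Ricci tensor using the orthogonal coordinate directions and then estimate each eigenvalue. Recall from Lemma~\ref{lem:R} that, in the $(r,x,y)$ coordinates,
\begin{align*}
\overline{R}_{rr} &= -2r^{-2}-\tfrac{5}{2}r^{-1}F^{-1}F'-\tfrac{1}{2}F^{-1}F'',\\
\overline{R}_{xx} &= -2r^2F-r^3F',\\
\overline{R}_{yy} &= -2r^2F-\tfrac{5}{2}r^3FF'-\tfrac{1}{2}r^4FF'',
\end{align*}
with all off-diagonal components vanishing. In particular, $\overline{\textup{Ric}}$ is diagonal with respect to the coordinate basis $\{\partial_r,\partial_x,\partial_y\}$, and therefore also with respect to the associated orthonormal frame
\begin{equation*}
e_1 = rF^{1/2}\,\partial_r,\qquad e_2 = r^{-1}\,\partial_x,\qquad e_3 = r^{-1}F^{-1/2}\,\partial_y.
\end{equation*}

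Next, I would substitute $F(r)=1-r^{-3}-br^{-4}$, together with $F'=3r^{-4}+4br^{-5}$ and $F''=-12r^{-5}-20br^{-6}$, into the three diagonal Ricci components. A direct computation gives
\begin{align*}
\overline{\textup{Ric}}(e_1,e_1) &= r^2F\,\overline{R}_{rr} = -2F-\tfrac{5}{2}rF'-\tfrac{1}{2}r^2F'' = -2+\tfrac{1}{2}r^{-3}+2br^{-4},\\
\overline{\textup{Ric}}(e_2,e_2) &= r^{-2}\,\overline{R}_{xx} = -2F-rF' = -2-r^{-3}-2br^{-4},\\
\overline{\textup{Ric}}(e_3,e_3) &= r^{-2}F^{-1}\,\overline{R}_{yy} = -2-\tfrac{5}{2}rF'-\tfrac{1}{2}r^2F'' = -2-\tfrac{3}{2}r^{-3}.
\end{align*}
Each of these satisfies $|\overline{\textup{Ric}}(e_i,e_i)+2|\leq C_0 r^{-3}$ for some constant $C_0>0$ depending only on $b$ and some lower bound for $r$ on the AdS-Melvin space.

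To conclude, I would expand an arbitrary unit vector as $\nu=\sum_{i=1}^{3}c_i e_i$ with $\sum c_i^2=1$. Because the Ricci tensor is diagonal in this frame,
\begin{equation*}
\overline{\textup{Ric}}(\nu,\nu)+2 = \sum_{i=1}^{3} c_i^2\bigl(\overline{\textup{Ric}}(e_i,e_i)+2\bigr),
\end{equation*}
so the triangle inequality and $|c_i|\leq 1$ give $|\overline{\textup{Ric}}(\nu,\nu)+2|\leq C_0 r^{-3}\sum c_i^2 = C_0 r^{-3}$, which is the desired bound. There is no real obstacle: the only ingredients are the explicit Ricci formula from Lemma~\ref{lem:R}, the fact that it is diagonal in the coordinate basis, and the algebraic cancellation at order $r^{0}$ that comes from the choice $F(r)=1-r^{-3}-br^{-4}$; the subleading terms in $F$ are designed precisely so that the ambient geometry is asymptotically hyperbolic, which is exactly what this estimate quantifies.
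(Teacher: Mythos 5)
Your proposal is correct and follows the same route as the paper: pass to the orthonormal frame adapted to $(r,x,y)$, read off the three eigenvalues of $\overline{\textup{Ric}}$, and conclude by the triangle inequality for an arbitrary unit $\nu$. One detail worth flagging: your value $\overline{\textup{Ric}}(e_3,e_3)=-2-\tfrac32 r^{-3}$ differs from the paper's, which lists the $e_3$-eigenvalue as $-2+\tfrac12 r^{-3}+2br^{-4}$ (equal to the $e_1$-eigenvalue). The discrepancy comes from taking the displayed coefficient of $\bar\nabla_a y\,\bar\nabla_b y$ in Lemma~\ref{lem:R} at face value; that coefficient contains a typo and should read $-2r^2F^2-\tfrac52 r^3 FF'-\tfrac12 r^4 FF''$, so that $r^{-2}F^{-1}\overline{R}_{yy}=-2F-\tfrac52 rF'-\tfrac12 r^2F''$, which matches the $e_1$ computation. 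This is only a matter of the exact coefficient at order $r^{-3}$; it does not affect the $\vert\overline{\textup{Ric}}(e_i,e_i)+2\vert\leq Cr^{-3}$ bound or your final conclusion.
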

\begin{proof}
From Lemma~\ref{lem:R} and \eqref{def:F}, the eigenvalues of the Ricci curvature are given by 
$$-2+\frac{1}{2r^3}+\frac{2b}{r^4},-2+\frac{1}{2r^3}+\frac{2b}{r^4},\ \textup{and}\ -2-\frac{1}{r^3}-\frac{2b}{r^4}.$$ Then the assertion follows directly.
\end{proof}
\section{Derivation of \eqref{equ:evoarea} and \eqref{equ:evoH}}\label{sec:B}
In this section, we derive \eqref{equ:evoarea} and \eqref{equ:evoH} under the flow \eqref{equ:flow}, which we conveniently recall here as
\begin{align*} 
\frac{\partial  \varphi}{\partial t } = \rho\nu.
\end{align*} 
Here $\varphi:[0,T_0)\times T^2\to M$ is a family of embeddings and $\rho$ is a smooth function on $M$. We write $\Sigma_t$ for the image of $\varphi(t,\cdot)$. We use $g_{ij}$ and $h_{ij}$ to denote the induced metric and the second fundamental form of $\Sigma_t$ respectively. First, we have the evolution equation of the metric
\begin{equation}\label{equ:evometric}
\frac{\partial g_{ij}}{\partial t} = 2\rho h_{ij},\ \frac{\partial g^{ij}}{\partial t} = -2\rho h^{ij}.
\end{equation}
They can be derived as the following.
\begin{proof}[Derivation of \eqref{equ:evometric}]
We have
\begin{align*}
    \frac{\partial g_{ij}}{\partial t} &= \frac{\partial}{\partial t}\bar{g}\left( \frac{\partial\varphi}{\partial  x^i}, \frac{\partial \varphi}{\partial x^j}\right)  =\bar{g}\left( \bar{\nabla}_{\frac{\partial}{\partial t}}\frac{\partial \varphi}{\partial x^i}, \frac{\partial \varphi}{\partial x^j}\right) + \bar{g}\left(\frac{\partial \varphi}{\partial x^i}, \bar{\nabla}_{\frac{\partial}{\partial t}}\frac{\partial \varphi}{\partial x^j}\right).
\end{align*}
\begin{align*}
\bar{g}\left( \bar{\nabla}_{\frac{\partial}{\partial t}}\frac{\partial \varphi}{\partial x^i}, \frac{\partial \varphi}{\partial x^j}\right)=\bar{g}\left( \bar{\nabla}_{\frac{\partial \varphi}{\partial x^i}}\frac{\partial \varphi}{\partial t}, \frac{\partial \varphi}{\partial x^j}\right)=\bar{g}\left( \bar{\nabla}_{\frac{\partial \varphi}{\partial x^i}}(\rho\nu) , \frac{\partial \varphi}{\partial x^j}\right)=\rho h_{ij}.
\end{align*}
Similarly,
\begin{align*}
\bar{g}\left( \frac{\partial \varphi}{\partial x^i}, \bar{\nabla}_{\frac{\partial}{\partial t}}\frac{\partial \varphi}{\partial x^j}\right)=\rho h_{ij}.
\end{align*}
Then we obtain
\begin{align*}
\frac{\partial g_{ij}}{\partial t}=2\rho h_{ij}.
\end{align*}
Also, since
\[0=\frac{\partial}{\partial t}(g_{ik}g^{kj})= g_{ik}\frac{\partial g^{kj}}{\partial t} + g^{kj}\frac{\partial g_{ik}}{\partial t},\]
we obtain
\begin{align*}
    \frac{\partial}{\partial t}g^{ij} &= - g^{ik}g^{j\ell}\frac{\partial g_{k\ell}}{\partial t} = -2\rho g^{ik}g^{j\ell}h_{ij} = -2\rho h^{ij}.
\end{align*}
\end{proof}
Now we are ready to prove \eqref{equ:evoarea}.
\begin{proof}[Derivation of \eqref{equ:evoarea}]
We recall the derivative of a determinant. Let $B(t)=(b_{ij}(t)), 1\leq i,j\leq n$ and $t\in (-\delta,\delta)$ be a smooth family of invertible matrices. Denote by $B^{-1}(t)=(b^{ij}(t))$ the inverse matrix of $B(t)$. Then it holds that
\begin{equation}\label{equ:detB}
\frac{  d}{  dt}\det B = \det B \cdot b^{ji}\frac{  d b_{ij}}{  dt}.
\end{equation} 
In local coordinates $\{x^1,x^2\}$, we have
\begin{align*}
dA_t= \sqrt{\det g}\, dx^1dx^2.
\end{align*}
From \eqref{equ:detB} and \eqref{equ:evometric}, we calculate
\begin{align*}
    \frac{\partial}{\partial t} \sqrt{\det g} &= \frac{1}{2\sqrt{\det g}}\cdot\frac{\partial}{\partial t}(\det g)=\frac{1}{2\sqrt{\det g}} \cdot \det g\cdot g^{ji}\frac{\partial g_{ij}}{\partial t} \\
    &= \frac 12 \sqrt{\det g}\cdot g^{ji} \cdot 2\rho h_{ij}  = \rho H \sqrt{\det g}.
\end{align*}
This implies \eqref{equ:evoarea}.
\end{proof}

Next, we calculate the evolution equation for the unit normal vector $\nu$. Recall that $\nabla$ is the Levi-Civita connection of the induced metric on $\Sigma_t$. The time derivative of $\nu$ is given by
\begin{equation}\label{equ:evonormal}
\frac{\partial\nu}{\partial t} = -\nabla\rho.
\end{equation}
The calculation for \eqref{equ:evonormal} can be found below.
\begin{proof}[Derivation of \eqref{equ:evonormal}]
We compute
\begin{align*}
    \bar{g}\left(\frac{\partial \nu}{\partial t},\frac{\partial \varphi}{\partial x^i} \right) &= -\bar{g}\left(\nu, \frac{\partial^2 \varphi}{\partial t\partial x^i}\right)   = -\bar{g}\left(\nu, \frac{\partial(\rho\nu)}{\partial x^i}\right) \\
    &= -\bar{g}\left(\nu, \rho\frac{\partial\nu}{\partial x^i}+\nu\frac{\partial\rho}{\partial x^i}\right)  = -\frac{\partial\rho}{\partial x^i}.
\end{align*}
Also,
\begin{align*}
\bar{g}\left(\frac{\partial \nu}{\partial t},\frac{\partial \varphi}{\partial x^i} \right)=\frac{1}{2}\frac{\partial t}{\partial t}\bar{g}(\nu,\nu)=0.
\end{align*}
Therefore $\frac{\partial\nu}{\partial t}=-\nabla\rho$.
\end{proof} 
Using \eqref{equ:evonormal}, we derive the evolution equation of the second fundamental form:
\begin{equation}\label{equ:evoh}
 \frac{\partial h_{ij}}{\partial t} = -\nabla_i\nabla_j\rho + \rho \bar{R}\left(\frac{\partial \varphi}{\partial  x^i}, \nu, \nu, \frac{\partial\varphi}{\partial x^j}\right) + \rho h_i^k h_{kj}. 
\end{equation}
\begin{proof}[Derivation of \eqref{equ:evoh}]

We have
\begin{align*}
    \frac{\partial h_{ij}}{\partial t} &= \frac{\partial}{\partial t}\bar{g}\left(\bar{\nabla}_{\frac{\partial\varphi }{\partial x^i}}\nu , \frac{\partial\varphi}{\partial t}\right) \\
    &= \bar{g}\left(\bar{\nabla}_{\frac{\partial}{\partial t}}\bar{\nabla}_{\frac{\partial\varphi }{\partial x^i}}\nu, \frac{\partial\varphi}{\partial x^j}\right)+\bar{g}\left(\bar{\nabla}_{\frac{\partial\varphi }{\partial x^i}}\nu, \frac{\partial^2\varphi}{\partial x^j\partial t}\right).
\end{align*}
The first term is
\begin{align*}
    \bar{g}\left(\bar{\nabla}_{\frac{\partial}{\partial t}}\bar{\nabla}_{\frac{\partial\varphi }{\partial x^i}}\nu, \frac{\partial\varphi}{\partial x^j}\right) &= \bar{g}\left(\bar{\nabla}_{\frac{\partial\varphi }{\partial x^i}}\bar{\nabla}_{\frac{\partial}{\partial t}}\nu, \frac{\partial\varphi}{\partial x^j}\right) + \bar{g}\left(\left(\bar{\nabla}_{\frac{\partial}{\partial t}}\bar{\nabla}_{\frac{\partial\varphi }{\partial x^i}}-\bar{\nabla}_{\frac{\partial\varphi }{\partial x^i}}\bar{\nabla}_{\frac{\partial}{\partial t}}\right)\nu, \frac{\partial\varphi}{\partial x^j}\right) \\
    &= \bar{g}\left(\bar{\nabla}_{\frac{\partial\varphi }{\partial x^i}}(-{\nabla} \rho), \frac{\partial\varphi}{\partial x^j}\right) + \bar{R}\left(\frac{\partial\varphi }{\partial x^i}, \frac{\partial}{\partial t}, \nu, \frac{\partial\varphi}{\partial x^j}\right) \\
    &= - {\nabla}_i {\nabla}_j\rho + \bar{R}\left(\frac{\partial\varphi }{\partial x^i}, \frac{\partial}{\partial t}, \nu, \frac{\partial\varphi}{\partial x^j}\right) \\
    &= - {\nabla}_i {\nabla}_j\rho + \rho \bar{R}\left(\frac{\partial\varphi }{\partial x^i}, \nu, \nu, \frac{\partial\varphi}{\partial x^j}\right)
\end{align*}
and the second term is
\begin{align*}
    \bar{g}\left(\bar{\nabla}_{\frac{\partial\varphi }{\partial x^i}}\nu, \frac{\partial^2\varphi}{\partial x^j\partial t}\right) &= \bar{g}\left( h_i^k \frac{\partial\varphi}{\partial x^k}, \frac{\partial(\rho\nu)}{\partial x^j}\right) \\
    &= \bar{g}\left( h_i^k \frac{\partial\varphi}{\partial x^k}, \rho h_j^\ell \frac{\partial\varphi}{\partial x^\ell}\right) \\
    &= \rho h_i^k h_j^\ell g_{k\ell} \\
    &= \rho h_i^k h_{kj}.
\end{align*}
The result follows.
\end{proof}
Finally, we can prove \eqref{equ:evoH} by combining \eqref{equ:evometric} and \eqref{equ:evoh}.
\begin{proof}[Derivation of \eqref{equ:evoH}]
We compute
\begin{align*}
    \frac{\partial H}{\partial t} &= \frac{\partial}{\partial t}(g^{ij}h_{ij}) = \frac{\partial g^{ij}}{\partial t}h_{ij} + \frac{\partial h_{ij}}{\partial t}g^{ij} \\
    &= -2\rho h^{ij}h_{ij} + \left(-\nabla_i\nabla_j\rho + \rho \bar{R}\left(\frac{\partial\varphi}{\partial x^i}, \nu, \nu, \frac{\partial\varphi}{\partial x^j}\right) + \rho h_i^k h_{kj}\right)g^{ij}\\ 
    &= -2\rho\vert A\vert^2 - \Delta \rho - \rho\overline{\text{Ric}}\left(\nu,\nu\right) + \rho\vert A\vert ^2 \\
    &= -\Delta \rho - \rho \overline{\text{Ric}}\left(\nu,\nu\right) - \rho\vert A\vert^2.
\end{align*}
\end{proof}

\end{appendix}

\section*{Acknowledgements}

The authors would like to thank the organizers of the PRIMES-USA program at MIT for providing the opportunity to collaborate on this research project. The authors would also like to think Dr. Tanya Khovanova and Dr. Kent Vashaw for carefully proofreading the paper and offering helpful suggestions.

\end{document}